\DeclareSymbolFont{cyrillic}{T2A}{cmr}{m}{n}
\DeclareMathSymbol{\Sha}{\mathalpha}{cyrillic}{216}
\newcommand{\field}[1]{\mathbf #1}
\newcommand{\mc}[1]{\mathcal #1}
\newcommand{\ms}[1]{\mathscr #1}
\newcommand{\widebar}[1]{\overline{#1}}
\newcommand{\bbig}{\text{\rm big}}
\newcommand{\R}{\field R}
\renewcommand{\C}{\field C}
\newcommand{\Z}{\field Z}
\newcommand{\Q}{\field Q}
\newcommand{\simto}{\stackrel{\sim}{\to}}
\newcommand{\send}{\ms E\!nd}
\DeclareMathOperator{\End}{End}
\DeclareMathOperator{\tw}{tw}
\DeclareMathOperator{\Spec}{Spec}
\DeclareMathOperator{\spec}{Spec}
\DeclareMathOperator{\Supp}{Supp}
\renewcommand{\phi}{\varphi}
\newcommand{\Az}{\underline{\operatorname{Az}}}
\DeclareMathOperator{\Def}{Def}
\newcommand{\Td}{\operatorname{Td}}
\renewcommand{\P}{\field P}
\newcommand{\A}{\field A}
\DeclareMathOperator{\Pic}{Pic}
\DeclareMathOperator{\Sh}{Sh}
\DeclareMathOperator{\D}{D}
\DeclareMathOperator{\pr}{pr}
\DeclareMathOperator{\Ext}{Ext}
\DeclareMathOperator{\NS}{NS}
\DeclareMathOperator{\PGL}{PGL}
\newcommand{\m}{\boldsymbol{\mu}}
\newcommand{\G}{\field G} 
\renewcommand{\H}{\operatorname{H}}
\DeclareMathOperator{\chern}{ch}
\renewcommand{\]}{]\!\hspace{0.03em}]}
\renewcommand{\)}{)\!\hspace{0.03em})}
\DeclareMathOperator*{\tensor}{\otimes}
\DeclareMathOperator*{\ltensor}{\stackrel{\field L}{\otimes}}
\DeclareMathOperator{\Tr}{\operatorname{Tr}}
\DeclareMathOperator{\rk}{\operatorname{rk}}
\newcommand{\surj}{\twoheadrightarrow}
\newcommand{\inj}{\hookrightarrow}
\DeclareMathOperator{\Aut}{\operatorname{Aut}}
\DeclareMathOperator{\Isom}{\operatorname{Isom}}
\DeclareMathOperator{\Hom}{\operatorname{Hom}}
\DeclareMathOperator{\Br}{\operatorname{Br}}
\DeclareMathOperator{\B}{\operatorname{B\!}}
\DeclareMathOperator{\Jac}{Jac}
\newcommand{\eps}{\varepsilon}
\newcommand{\Chow}{\operatorname{CH}}
\newtheorem{lem}{Lemma}[section]
\newtheorem{thm}[lem]{Theorem}
\newtheorem{prop}[lem]{Proposition}
\newtheorem{cor}[lem]{Corollary}
\newtheorem{claim}[lem]{Claim}
\theoremstyle{definition}
\newtheorem{defn}[lem]{Definition}
\newtheorem{example}[lem]{Example}
\theoremstyle{remark}
\newtheorem{remark}[lem]{Remark}
\numberwithin{equation}{subsection}
\author{Max Lieblich}
\title{Rational curves in the moduli of supersingular K3 surfaces}
\begin{document}
\maketitle
\begin{abstract}
We show how to construct non-isotrivial families of supersingular K3 surfaces over rational curves using a relative form the Artin-Tate isomorphism and twisted analogues of Bridgeland's results on moduli spaces of stable sheaves on elliptic surfaces. As a consequence, we show that every point of Artin invariant 10 in the Ogus space of marked supersingular K3 surfaces lies on infinitely many pairwise distinct rational curves canonically associated to elliptic structures on the underlying K3 surface.
\end{abstract}
\tableofcontents

\section{Introduction}
\label{sec:intro}

In this paper we study certain special rational curves in moduli spaces of K3 surfaces that are generated by cohomology classes. In particular, we focus on two results. Fix an algebraically closed field $k$ of characteristic $p>0$ throughout this paper. Let (for now) $\pi: X\to\spec k$ be the structure map of a supersingular K3 surface.

\begin{thm}[Artin]\label{thmA}
The fppf sheaf $\R^2\pi_\ast\m_p$ is representable by a smooth group scheme over $k$, 
and the connected component $\ms C^\circ$ of the identity is isomorphic to $\G_a$.
\end{thm}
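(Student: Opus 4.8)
The plan is to extract $\R^2\pi_\ast\m_p$ from the flat Kummer sequence, establish smoothness and one-dimensionality of the identity component by a tangent/obstruction computation, and invoke the supersingular hypothesis only at the final classification step. First I would apply $\R\pi_\ast$ to the fppf-exact sequence $1 \to \m_p \to \G_m \xrightarrow{(\,\cdot\,)^p} \G_m \to 1$ (exact because $p = \ch k$). Writing $\R^1\pi_\ast\G_m = \Pic_{X/k}$ and $\R^2\pi_\ast\G_m = \Br_{X/k}$, and using that the $p$-th power map is multiplication by $p$ on these abelian sheaves, the long exact sequence collapses to
\begin{equation*}
0 \longrightarrow \Pic_{X/k}/p \longrightarrow \R^2\pi_\ast\m_p \longrightarrow \Br_{X/k}[p] \longrightarrow 0.
\end{equation*}
Because $X$ is a K3 surface, $\H^1(X,\mc O_X) = 0$, so $\Pic_{X/k}$ is étale; its reduced part is the constant lattice $\underline{\NS(X)} \cong \underline{\Z}^{\,22}$, whence $\Pic_{X/k}/p \cong (\underline{\Z/p})^{22}$ is finite étale and feeds only the component group. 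Thus the entire identity component of $\R^2\pi_\ast\m_p$ must come from the connected part of $\Br_{X/k}[p]$.

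Next I would address representability and smoothness. Since $\m_p$ is finite and flat and $\pi$ is proper, $\R^2\pi_\ast\m_p$ is representable by a group scheme $\ms C$. For the local structure I would run the deformation theory of the Brauer class directly: as $\m_p$ and $\G_m$ share the Lie algebra $\mc O_X$, the tangent space of $\ms C$ at the identity is $\H^2(X,\mc O_X) \cong k$, while obstructions lie in $\H^3(X,\mc O_X) = 0$. Therefore $\ms C$ is smooth and $\ms C^\circ$ is a smooth connected commutative group scheme of dimension one over the algebraically closed field $k$, so $\ms C^\circ$ is either $\G_a$ or $\G_m$.

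It remains to rule out $\G_m$, and this is where supersingularity enters. Comparing formal completions along the identity, the exact sequence above identifies $\widehat{\ms C^\circ}$ with the $p$-torsion of the Artin--Mazur formal Brauer group $\widehat{\Br}_X$, whose tangent space is again $\H^2(X,\mc O_X)$; since $\Pic_{X/k}/p$ is étale it meets the connected $\ms C^\circ$ trivially, so $\ms C^\circ$ embeds into the connected part of $\Br_{X/k}[p]$. By definition $X$ is supersingular precisely when $\widehat{\Br}_X$ has infinite height, i.e. $\widehat{\Br}_X \cong \widehat{\G_a}$, whereas the formal completion of $\G_m$ is $\widehat{\G_m}$ of height one; this excludes $\ms C^\circ \cong \G_m$ and forces $\ms C^\circ \cong \G_a$. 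The main obstacle I anticipate is exactly this transition from the formal to the algebraic picture: for a finite-height K3 the corresponding $p$-torsion is a finite connected group scheme, and it is only the degeneration $[p] = 0$ on $\widehat{\G_a}$ that inflates the identity component to something one-dimensional. Making the matching of $\widehat{\ms C^\circ}$ with $\widehat{\Br}_X$ precise, rather than heuristic at the level of formal groups, together with the clean representability of the flat-cohomology sheaf, is the technical heart of the argument.
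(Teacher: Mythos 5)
Your Kummer-sequence reduction and your final step (identifying the formal completion of $\ms C^\circ$ with the $p$-torsion of $\widehat{\Br}_X$ and using supersingularity, i.e.\ $\widehat{\Br}_X\cong\widehat{\G_a}$ and $[p]=0$ on $\G_a$ in characteristic $p$, to force $\ms C^\circ\cong\G_a$) do agree in spirit with the end of the paper's proof. But the proposal has a genuine gap, and it sits exactly where you yourself locate ``the technical heart'': representability. The assertion ``since $\m_p$ is finite and flat and $\pi$ is proper, $\R^2\pi_\ast\m_p$ is representable by a group scheme'' is not a theorem. There is no general representability result for higher fppf direct images of finite flat non-smooth group schemes in characteristic $p$; this is precisely the content of Artin's claim, whose proof never appeared in the literature, and it is what the paper spends all of Section \ref{sec:rep} establishing. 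The paper's argument is moduli-theoretic: Proposition \ref{P:diag} shows the diagonal of $\ms C$ is representable by closed immersions of finite presentation, by realizing the locus where a class vanishes as the moduli space $P$ of invertible twisted sheaves with trivialized $p$-th power and proving that $P\to T$ is a proper monomorphism (hence a closed immersion); Proposition \ref{P:surj-by-az} then produces a smooth cover of $\ms C$ by the Artin stack $\Az$ of degree-$p$ Azumaya algebras satisfying a trace condition, using deformation theory of twisted sheaves (the stack of lifts over a square-zero extension is a gerbe with coherent band, hence neutral over an affine base). Only with these two ingredients in hand does $\ms C$ become a separated algebraic group-space locally of finite type, hence a group scheme over the algebraically closed field $k$. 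Your last sentence acknowledges this issue but supplies no argument, so as it stands the proposal assumes the hardest part of the theorem.

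Two smaller points. First, your smoothness argument via ``obstructions lie in $\H^3(X,\mc O_X)=0$'' treats $\m_p$ as if it were smooth; deformation theory with values in the Lie algebra is not directly available for a non-smooth group scheme, and the paper instead deduces smoothness of $\ms C$ from the fact that its completion at the identity is $\widehat{\G_a}$, which is formally smooth. Second, your dichotomy ``$\G_a$ or $\G_m$'' for a smooth connected one-dimensional group over $k$ omits elliptic curves; the paper's classification avoids this case division by noting that $\ms C^\circ$ is $p$-torsion (again because $\widehat{\G_a}$ is), and the only $p$-torsion smooth connected one-dimensional $k$-group scheme is $\G_a$. Both of these could be repaired along the lines you sketch, but the representability gap cannot be patched without an argument of a different nature, such as the paper's.
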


This is a special case of a result of Artin, published as Theorem 3.1 of \cite{MR0371899} with the caveat that the proof would be published elsewhere. I have not found the proof in the literature, so I provide a moduli-theoretic proof here. Using these ``rational curves in cohomology,'' we then prove the following, which is essentially a relative version of the Artin-Tate isomorphism (Theorem 3.1 of \cite{MR1610977}, generalized in Proposition 4.5 of \cite{MR0244271}).

\begin{thm}\label{thmB}
Given a $\m_p$-gerbe $$\ms X\to X\times\A^1$$ inducing an isomorphism $\A^1\simto\ms C^\circ$ (in Theorem \ref{thmA}) and a choice of elliptic fibration $X\to\P^1$, there is a canonically defined open substack of the stack of coherent $\ms X$-twisted sheaves that is a $\G_m$-gerbe over a non-isotrivial family $Y\to\A^1$ of K3 surfaces. Moreover, the structure map of $Y$ admits a factorization  
$$Y\to\P^1\times\A^1$$ such that for each geometric point $t\to\A^1$, the morphism $$Y_t\to\P^1_t$$ is an \'etale form of $$X\tensor\kappa(t)\to\P^1\tensor\kappa(t),$$
and the fiber over $0\in\A^1$ is isomorphic to the original elliptic structure $X\to\P^1$.
Finally, distinct elliptic structures on $X$ give rise to distinct families of K3 surfaces $Y$.
\end{thm}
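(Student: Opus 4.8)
The plan is to realize $Y$ as a relative moduli space of stable twisted sheaves supported on the fibers of $f\colon X\to\P^1$, and to read off all of the asserted structure from the twisted analogue of Bridgeland's relative Fourier--Mukai theory together with the classical Artin--Tate correspondence. Fix the distinguished Mukai vector $v$ of a sheaf supported on the fibers of $f$ that is, on each fiber, a torsion-free rank-one sheaf of degree one; this $v=(0,[F],1)$ is primitive and isotropic, since the fiber class $[F]$ satisfies $[F]^2=0$, so the standard K3 theory will apply. First I would form the stack of coherent $\ms X$-twisted sheaves on $X\times\A^1$ that are flat over $\A^1$ of class $v$, and restrict to the open locus $\ms M$ of objects stable on each geometric fiber; this open substack is canonical, being attached to the distinguished class $v$. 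By the twisted relative moduli theory, the coprimality of fiberwise rank and degree forces every object of $\ms M$ to have automorphism group exactly $\G_m$ (the scalars), so $\ms M$ is a $\G_m$-gerbe over its rigidification $Y\to\A^1$, and applying the twisted form of Mukai's theorem fiberwise shows each $Y_t$ is a smooth K3 surface, whence $Y\to\A^1$ is a smooth proper family of K3 surfaces.

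Because the objects parametrized by $\ms M$ are supported on fibers of $f$, the Fitting support defines a canonical morphism $Y\to\P^1\times\A^1$, the desired factorization: a stable twisted sheaf is sent to the point of $\P^1$ carrying its support together with its $\A^1$-parameter. For a fixed geometric point $t$, the induced map $Y_t\to\P^1_t$ is a genus-one fibration whose generic fiber is the moduli of degree-one $\alpha_t$-twisted sheaves on the generic fiber $X_\eta$, an elliptic curve over the function field of $\P^1_t$. Under the relative Artin--Tate isomorphism this moduli space is exactly the $\Sha$-torsor attached to the Brauer class $\alpha_t$, and a torsor under an abelian variety over a field is split by a separable extension; spreading out and invoking properness upgrades this to an isomorphism $Y_t\cong X\otimes\kappa(t)$ over $\P^1$ after an \'etale base change, which is precisely the assertion that $Y_t\to\P^1_t$ is an \'etale form of $X\otimes\kappa(t)\to\P^1\otimes\kappa(t)$.

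It then remains to identify the special fiber, to establish non-isotriviality, and to separate elliptic structures. At $0\in\A^1$ the isomorphism $\A^1\simto\ms C^\circ$ carries $0$ to the identity, so $\alpha_0$ is trivial and $\ms X|_{t=0}$ is a trivial gerbe; twisted sheaves then become honest sheaves, and the degree-one fiberwise moduli of stable sheaves on the elliptic K3 surface $X$ is fine and reconstructs $X$ through relative Abel--Jacobi, giving $Y_0\cong X$ compatibly with the fibration over $\P^1$. For non-isotriviality I would use that $\A^1\simto\ms C^\circ$ is an isomorphism, so the classes $\alpha_t$, equivalently the associated $\Sha$-elements, vary non-trivially with $t$; since $\alpha_0=0$ while $\alpha_t\neq 0$ for $t\neq 0$, the torsors $Y_t$ are not all isomorphic, so $Y$ is not isotrivial. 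Finally, the whole construction is canonical in the elliptic structure, and the fiber over $0$ recovers the input fibration $f\colon X\to\P^1$ together with its factorization through $\P^1$; thus ``restrict to $t=0$'' is a retraction of the assignment $f\mapsto Y$, which forces that assignment to be injective, so distinct elliptic structures give distinct families.

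The main obstacle will be the second paragraph: making the twisted relative Fourier--Mukai and Artin--Tate dictionary precise, namely proving that the relative moduli space of degree-one twisted sheaves is smooth and proper over \emph{all} of $\P^1\times\A^1$ --- including over the finitely many degenerate fibers of $f$ --- and that it is canonically the $\Sha$-torsor of $\alpha_t$. Controlling the moduli across the bad fibers, and choosing the stability condition and the Mukai vector $v$ so that $Y\to\P^1\times\A^1$ is flat with exactly the stated fibers, is where the real work lies; once this relative identification is in hand, the identification of $Y_0$, the non-isotriviality, and the injectivity in the elliptic structure all follow formally.
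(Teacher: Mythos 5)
Your overall construction---relative moduli of stable twisted sheaves with fiber-supported Mukai vector, the support morphism to $\P^1\times\A^1$, Artin--Tate over the generic point of $\P^1$, and triviality of the gerbe at $0$ giving $Y_0\cong X$---is the same skeleton as the paper's. You are also right to flag the comparison of the twisted moduli problem with the untwisted one across the singular fibers of $f$ as the hard technical point; the paper handles it by defining stability via pushforward to $\ms X$ (Definition \ref{sec:modul-interpr-isom-10}) and, crucially, by Lemma \ref{sec:modul-interpr-isom-15}, which produces an invertible twisted sheaf of multidegree zero on every (possibly reducible) fiber. Note that this lemma is precisely where the hypothesis that $\ms X$ deforms the trivial gerbe enters (connectedness of the parametrizing curve plus the multidegree map), so it is not a deferrable technicality but the place where your standing assumption actually gets used. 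The more serious problems, however, are in your last paragraph: both concluding claims of the theorem have genuine gaps.

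For non-isotriviality you argue that $\alpha_t\neq 0$ for $t\neq 0$, hence the torsors $Y_t$ are pairwise non-isomorphic, hence the family is non-isotrivial. This conflates non-isomorphism as torsors with non-isomorphism as surfaces: a nontrivial class in $\Sha$ gives a surface $Y_t$ that is not isomorphic to $X\otimes\kappa(t)$ \emph{over} $\P^1$ compatibly with the Jacobian identification, but nothing so far prevents an abstract isomorphism $Y_{\bar\eta}\cong X\otimes\overline{k(t)}$---which is exactly what isotriviality would produce. Ruling this out is the content of Corollary \ref{sec:modul-interpr-isom-1}: if $Y_{\bar\eta}$ were defined over $k$, its elliptic fibration would descend as well, and the resulting change of basis of $\P^1$ is forced to be $k$-linear because it moves enough $k$-points into $k$-points; that step requires the dichotomy of Corollary \ref{cor:special-set} (at least three singular fibers, or non-constant $j$-invariant via the extremal-surface argument of Proposition \ref{sec:sing-fibers-ellipt}), none of which appears in your proposal. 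The distinctness claim has a parallel defect: your ``retraction'' argument recovers $f$ from the pair $(Y,\,Y\to\P^1\times\A^1)$, but the factorization is auxiliary data, and the theorem asserts that the \emph{families of surfaces} $Y\to\A^1$ attached to distinct pencils are distinct. Since every pencil yields the same fiber $X$ over $0$, restriction to $0$ cannot separate them, and your argument is circular. The paper needs the entire apparatus of Section \ref{sec:static_pencils_and_deformations}: if two inequivalent pencils produced the same family, both would remain static over it, and Propositions \ref{P:static-implies-constant} and \ref{sec:stat-penc-deform} show that a deformation keeping two such pencils static must be constant, contradicting non-isotriviality.
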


The family $Y\to\P^1$ in Theorem \ref{thmB} can be made to belong to various moduli problems (the Ogus space of marked K3 surfaces, the space of polarized K3 surfaces, etc.), at least over open subsets of the base $\A^1$. As a consequence, the formation of moduli spaces of twisted sheaves can be used to trace out rational curves on various moduli spaces using rational curves of cohomology classes. 

%
%
%

\subsection{Outline}
In Sections \ref{sec:few-remarks-cohom} and \ref{sec:singular-fibers} 
we prove a few preliminaries about the fppf cohomology of $\m_p$ on families of curves and the singular fibers of elliptic K3 surfaces. 
This is followed by a proof of Theorem \ref{thmA} in Section \ref{sec:rep} and a proof of Theorem \ref{thmB} in Sections \ref{sec:mod-interp} and \ref{sec:ratcurves}. Finally, in Section \ref{sec:static_pencils_and_deformations}, we show that distinct elliptic pencils give rise to distinct rational curves in moduli.

We have included the bare minimum in this manuscript necessary to get the theory off the ground and provide an adequate reference for \cite{1403.3073}. 

\subsection{History}
The work described here has been developing since 2011. I started giving public lectures about it in 2012 (see, for example, the Banff video 
\begin{center}
\url{http://videos.birs.ca/2012/12w5027/201203271601-Lieblich.mp4}\end{center} available since May of 2012) and discussed it by email and in person with various parties in 2012 and 2013. It was also described in my contribution to the 2012 Simons Symposium \cite{MR3114930} (published in April of 2013). 

\subsection{Acknowledgments}
I had many helpful interactions with Valery Alexeev, Brendan Hassett, Daniel Huybrechts, Aise Johan de Jong, Nick Katz, Dino Lorenzini, and Davesh Maulik during the preparation of this manuscript.

\section{A few remarks on the cohomology of
    \texorpdfstring{$\boldsymbol\mu_p$}{} on a family of curves}
\label{sec:few-remarks-cohom}

The following are very well known for \'etale cohomology with coefficients of order prime to
the residue characteristic. They are also true for fppf cohomology with coefficients in $\m_p$, as we
record here.

\begin{lem}
  \label{sec:few-remarks-cohom-2}
  Suppose $C$ is a proper smooth curve over an algebraically closed
  field $k$. The Kummer sequence induces a canonical isomorphism
  $$\Pic(C)/p\Pic(C)=\Z/p\Z\simto\H^2(C,\m_p)$$
\end{lem}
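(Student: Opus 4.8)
The plan is to run the long exact sequence attached to the Kummer sequence in the fppf topology. In characteristic $p$ the sequence
$$1 \to \m_p \to \G_m \xrightarrow{(-)^p} \G_m \to 1$$
is exact as a sequence of fppf sheaves --- this is precisely where the flat topology is needed, since raising to the $p$-th power is not an epimorphism in the \'etale topology --- so it yields a long exact sequence in flat cohomology. The relevant segment reads
$$\H^1(C,\G_m) \xrightarrow{\,p\,} \H^1(C,\G_m) \to \H^2(C,\m_p) \to \H^2(C,\G_m).$$

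Next I would identify the $\G_m$-cohomology. Since $\G_m$ is smooth, its flat and \'etale cohomology agree (a theorem of Grothendieck), so $\H^1(C,\G_m) = \Pic(C)$ and $\H^2(C,\G_m) = \Br(C)$. For a smooth proper curve over an algebraically closed field, Tsen's theorem gives $\Br(C) = 0$. Feeding this into the segment above exhibits $\H^2(C,\m_p)$ as the cokernel of multiplication by $p$ on $\Pic(C)$; that is, the connecting map induces a canonical isomorphism $\Pic(C)/p\Pic(C) \simto \H^2(C,\m_p)$.

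It then remains to compute $\Pic(C)/p\Pic(C)$. Here I would use the degree exact sequence
$$0 \to \Pic^0(C) \to \Pic(C) \xrightarrow{\deg} \Z \to 0,$$
where $\Pic^0(C) = \Jac(C)(k)$. Because $\Jac(C)$ is an abelian variety over the algebraically closed field $k$, multiplication by $p$ is an isogeny and hence surjective on $k$-points, so $\Pic^0(C)$ is $p$-divisible and $\Pic^0(C)/p\Pic^0(C) = 0$. Reducing the degree sequence modulo $p$ (right exactness of $-\tensor\Z/p\Z$) then yields $\Pic(C)/p\Pic(C) \simto \Z/p\Z$, which combined with the previous step completes the proof.

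The only genuinely nontrivial inputs are external: the exactness of the Kummer sequence in the flat topology, the comparison of flat and \'etale cohomology for the smooth group $\G_m$, and Tsen's vanishing $\Br(C)=0$. Once these are in hand the remainder is a formal diagram chase, and the main thing to be careful about is that every cohomology group must be read in the fppf topology rather than the \'etale one, since the \'etale Kummer sequence is not exact in this setting.
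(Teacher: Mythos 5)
Your proof is correct and follows essentially the same route as the paper: exactness of the Kummer sequence on the fppf site plus Tsen's vanishing $\H^2(C,\G_m)=0$. You additionally spell out the identification $\Pic(C)/p\Pic(C)\cong\Z/p\Z$ via the degree sequence and $p$-divisibility of $\Pic^0(C)$, which the paper leaves implicit in the statement of the lemma.
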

\begin{proof}
  By Tsen's theorem $\H^2(C,\G_m)=0$. Since the $p$th power Kummer
  sequence is exact on the fppf site, the result follows.
\end{proof}

\begin{prop}
  \label{P:curve-coho}
  Suppose $Z\to G$ is a proper smooth morphism of finite presentation of relative dimension $1$ 
  with $G$ connected and $\alpha\in\H^2(Z,\m_p)$. There exists a unique
  element $a\in\Z/p\Z$ such that for every
  geometric point $g\to G$, the restriction $\alpha_{Z_g}\in\Z/p\Z$ is
  equal to $a$ via the isomorphism of Lemma \ref{sec:few-remarks-cohom-2}.
\end{prop}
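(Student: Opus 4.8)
The plan is to identify the fppf sheaf $\R^2\pi_\ast\m_p$ on $G$ with the constant sheaf $\underline{\Z/p\Z}$, by means of the relative Picard degree, and then to read off $a$ as the image of $\alpha$ under the Leray edge map $\H^2(Z,\m_p)\to\H^0(G,\R^2\pi_\ast\m_p)$. Throughout I may assume the geometric fibers of $\pi$ are connected: this is already implicit in the statement, since $\alpha_{Z_g}$ is asserted to lie in $\Z/p\Z\cong\H^2(Z_g,\m_p)$ via Lemma \ref{sec:few-remarks-cohom-2}, and in any event the number of geometric components of the fibers is locally constant, hence constant, on the connected base $G$.

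First I would push the $p$th power Kummer sequence forward along $\pi$ to obtain the exact sequence of fppf sheaves on $G$
$$\R^1\pi_\ast\G_m \xrightarrow{\ \cdot p\ } \R^1\pi_\ast\G_m \to \R^2\pi_\ast\m_p \to \R^2\pi_\ast\G_m \xrightarrow{\ \cdot p\ } \R^2\pi_\ast\G_m,$$
which yields the short exact sequence
$$0\to \R^1\pi_\ast\G_m/p\,\R^1\pi_\ast\G_m \to \R^2\pi_\ast\m_p \to (\R^2\pi_\ast\G_m)[p]\to 0.$$
Next I would compute the left-hand term. Here $\R^1\pi_\ast\G_m=\Pic_{Z/G}$ sits in the component group sequence $0\to\Pic^0_{Z/G}\to\Pic_{Z/G}\to\underline{\Z}\to0$, in which $\Pic^0_{Z/G}$ is the relative Jacobian, an abelian scheme on which multiplication by $p$ is finite locally free and therefore an fppf epimorphism. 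Applying the snake lemma to multiplication by $p$ then gives a canonical isomorphism $\Pic_{Z/G}/p\,\Pic_{Z/G}\simto\underline{\Z/p\Z}$ induced by the degree; this is exactly the relative form of the isomorphism of Lemma \ref{sec:few-remarks-cohom-2}.

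It remains to annihilate the right-hand term, that is, to show $(\R^2\pi_\ast\G_m)[p]=0$. This is a relative Tsen theorem and is the step I expect to be the main obstacle. The geometric fibers satisfy $\H^2(Z_g,\G_m)=0$ by Tsen, exactly as in the proof of Lemma \ref{sec:few-remarks-cohom-2}, so the difficulty is purely one of relative vanishing; for a smooth proper relative curve with connected fibers one has $\R^2\pi_\ast\G_m=0$, which I would either cite or deduce from the fibral vanishing by a base-change argument. Granting it, the short exact sequence above yields a canonical isomorphism $\R^2\pi_\ast\m_p\simto\underline{\Z/p\Z}$ compatible with the fiberwise identifications of Lemma \ref{sec:few-remarks-cohom-2}.

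Finally I would feed this into the Leray spectral sequence $\H^p(G,\R^q\pi_\ast\m_p)\Rightarrow\H^{p+q}(Z,\m_p)$. Its edge map carries $\alpha$ to a global section of $\R^2\pi_\ast\m_p=\underline{\Z/p\Z}$, and since $G$ is connected we have $\H^0(G,\underline{\Z/p\Z})=\Z/p\Z$; I define $a$ to be the resulting element. For each geometric point $g\to G$, compatibility of the edge map with base change---immediate once $\R^2\pi_\ast\m_p$ is the constant sheaf, Lemma \ref{sec:few-remarks-cohom-2} being the case $G=\Spec\kappa(g)$---identifies the stalk of this section at $g$ with $\alpha_{Z_g}$, so that $\alpha_{Z_g}=a$. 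Uniqueness is then automatic, as $a$ is recovered by restricting to any single geometric fiber.
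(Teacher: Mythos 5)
Your outline runs structurally parallel to the paper's proof: both come down to the Kummer sequence, the identification $\Pic/p\Pic\simeq\Z/p\Z$ by degree, and a vanishing statement for $\H^2(\cdot,\G_m)$ in a relative setting. The difference is that you work at the level of higher direct image sheaves on $G$, whereas the paper first reduces the constancy assertion to the case where $G$ is the spectrum of a complete dvr (geometric points of a connected base are linked through specializations), so that it only ever needs $\H^2(Z,\G_m)=0$ for the total space of a curve over a complete dvr with algebraically closed residue field. Your steps with the relative Kummer sequence, the degree isomorphism via the Jacobian, and the Leray edge-map bookkeeping are fine.

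The genuine gap is exactly the step you flag as the main obstacle: $(\R^2\pi_\ast\G_m)[p]=0$. This cannot be disposed of by ``a base-change argument'' from the fibral vanishing. Proper and smooth-proper base change are theorems about torsion sheaves whose torsion is invertible on the base; they apply neither to $\G_m$, which is not torsion, nor to $\m_p$ in characteristic $p$. Concretely, the stalk of $\R^2\pi_\ast\G_m$ (\'etale, equivalently fppf since $\G_m$ is smooth) at a geometric point $\bar g$ is $\H^2\bigl(Z\times_G\Spec\ms O^{sh}_{G,\bar g},\G_m\bigr)$, not $\H^2(Z_{\bar g},\G_m)$, and bridging that difference is the entire content of the proposition. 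The paper supplies it (over a complete dvr) by a twisted-sheaf argument: regularity of $Z$ makes $\H^2(Z,\G_m)$ torsion, so a class is represented by a $\m_n$-gerbe; Tsen's theorem gives an invertible twisted sheaf on the closed fiber; the obstruction to deforming it lies in $\H^2$ of a curve with coherent coefficients, hence vanishes; and Grothendieck existence algebraizes the formal deformation, trivializing the class. Note, moreover, that your sheaf-level formulation demands the vanishing over \emph{all} strict henselizations, whose residue fields are only separably closed; there the $p$-torsion part of Tsen's theorem is itself delicate in characteristic $p$, which is precisely the subtlety the paper's reduction to complete dvrs with algebraically closed residue fields avoids. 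So as written the proposal leaves the crux unproven, and the fallback route you suggest for it would fail.
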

\begin{proof}
  It suffices to prove this under the assumption that $G$ is the
  spectrum of a complete dvr.
  \begin{lem}
    When $G$ is the spectrum of a complete dvr, we have that $\H^2(Z,\G_m)=0$.
  \end{lem}
  \begin{proof}
    First, since $Z$ is regular the group is torsion.  Thus, any
    $\G_m$-gerbe is induced by a $\m_n$-gerbe for some $n$. Fix a
    $\m_n$-gerbe $\ms Z\to Z$. By Tsen's theorem and Lemma 3.1.1.8 of \cite{MR2388554}, there is
    an invertible $\ms Z_g$-twisted sheaf $L$, where $g$ is the closed
    point of $g$. The obstruction to deforming such a sheaf lies in
    $$\H^2(Z,\ms O)=0,$$
    so that $L$ has a formal deformation over the completion of $\ms
    Z$. By the Grothendieck Existence Theorem for proper Artin stacks,
    Theorem 11.1 of \cite{MR2312554} (or, in this case, the classical Grothendieck Existence Theorem for
    coherent modules over an Azumaya algebra representing $\alpha$),
    this formal deformation algebraizes, trivializing the class of
    $\ms Z$ in $\H^2(Z,\G_m)$, as desired.
  \end{proof}
  Applying the lemma and the Kummer sequence, we see that
  $$\Pic(Z)/p\Pic(Z)\simto\H^2(Z,\m_p).$$
  But restricting to a fiber defines a canonical isomorphism
  $$\Pic(Z)/p\Pic(Z)\simto\Pic(Z_g)/p\Pic(Z_g)\simto\Z/p\Z$$
  independent of the point $g$. The result follows.
\end{proof}

\begin{cor}
  \label{C:coho-class-indep}
  Suppose $E\subset Z\to T$ is a family of smooth genus $1$ fibers in
  a proper flat family of elliptic surfaces of finite presentation
  over a connected base. Given a class
  $$\alpha\in\H^2(Z,\m_p),$$
  there is an element $a\in\Z/p\Z$ such that for every geometric point
  $t\in T$, the restriction of $\alpha$ to $E_t$ equals $a$ via the
  isomorphism of Lemma \ref{sec:few-remarks-cohom-2}.
\end{cor}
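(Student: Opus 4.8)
The plan is to reduce the statement to Proposition \ref{P:curve-coho} by assembling all of the smooth genus $1$ fibers into a single proper smooth family of relative dimension $1$ over a \emph{connected} base, to which that proposition applies verbatim.

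First I would make the ambient geometry precise. The elliptic structure on the family $Z\to T$ is a morphism $Z\to S$ to a relative base curve $S\to T$, and the smooth genus $1$ fibers are exactly the fibers of $Z\to S$ over the open locus $S^{\mathrm{sm}}\subset S$ on which this morphism is smooth, namely the complement of the relative discriminant. Restricting, I set $E=Z\times_S S^{\mathrm{sm}}$, so that $E\to S^{\mathrm{sm}}$ is a proper smooth morphism of finite presentation of relative dimension $1$, whose geometric fibers are precisely the smooth genus $1$ fibers of the original family. In the situation relevant to Theorem \ref{thmB} the base curve is $\P^1$, so $S=\P^1\times T$ (or a $\P^1$-bundle over $T$).

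Next I would verify that $S^{\mathrm{sm}}$ is connected. Since $T$ is connected and $S\to T$ is the relative base curve of an elliptic surface, $S$ is irreducible, and $S^{\mathrm{sm}}$ is a dense open subscheme, hence connected. With this in hand, Proposition \ref{P:curve-coho} applied to $E\to S^{\mathrm{sm}}$ and the class $\alpha|_E\in\H^2(E,\m_p)$ produces a unique element $a\in\Z/p\Z$ agreeing with the restriction of $\alpha$ to every geometric fiber $E_s$ via the isomorphism of Lemma \ref{sec:few-remarks-cohom-2}. Since each smooth genus $1$ fiber $E_t$ over a geometric point $t\to T$ arises as one of these fibers $E_s$, this single $a$ is the asserted class.

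The only genuine obstacle is the connectedness step, or more precisely the identification of the correct connected base: one must ensure that $S^{\mathrm{sm}}$ is connected as a scheme, rather than merely having connected fibers over $T$, for it is exactly this global connectedness that forces the invariant $a$ of Proposition \ref{P:curve-coho} to be the same across distinct points of $T$ and not just locally constant on $T$. Once the smooth-fiber family is exhibited over the irreducible—hence connected—base $S^{\mathrm{sm}}$, the corollary follows immediately from Proposition \ref{P:curve-coho}.
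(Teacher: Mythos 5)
Your proposal reaches the right conclusion, but it takes a detour that introduces problems the paper's argument avoids entirely. The corollary is stated without proof because its hypotheses are arranged so that Proposition \ref{P:curve-coho} applies \emph{directly}: the given family $E\to T$ of smooth genus $1$ fibers is itself a proper smooth morphism of finite presentation of relative dimension $1$ over the connected base $T$. One simply restricts $\alpha$ to $E$ and invokes the proposition with $G=T$; the element $a$ it produces is the asserted one. No base curve, discriminant locus, or total smooth locus needs to enter the picture.

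Your route --- assembling all smooth fibers into $Z\times_S S^{\mathrm{sm}}\to S^{\mathrm{sm}}$ --- would prove a stronger statement (constancy of the class across \emph{all} smooth fibers, not only the chosen ones), but it has two genuine defects as written. First, it uses data the corollary does not supply: the statement gives only the inclusion $E\subset Z$ and the fact that each geometric fiber of $Z\to T$ is an elliptic surface; it does not provide a relative fibration $Z\to S$ over $T$. Such a morphism exists in the paper's applications, but your proof silently assumes it as part of the hypotheses. Second, your connectedness argument is incorrect: $T$ is assumed connected, not irreducible, and connectedness of $T$ does not make $S$ irreducible (take $T$ a nodal curve and $S=\P^1\times T$), so you cannot conclude that the dense open $S^{\mathrm{sm}}$ is connected in the way you claim. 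The conclusion you want is nevertheless true, by a different argument: $S^{\mathrm{sm}}\to T$ is an open surjection whose fibers ($\P^1$ minus the finitely many discriminant points) are connected, and an open surjection with connected fibers onto a connected base has connected source. With that repair, and with the relative fibration added as an explicit hypothesis, your argument goes through; but the direct application of Proposition \ref{P:curve-coho} to $E\to T$ is both shorter and needs neither addition.
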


\begin{example}
  This kind of thing is not as utterly trivial as it seems. Consider
  the cuspidal cubic $C$. The Kummer sequence shows
  that there is a class $\alpha\in\H^2(C\times\A^1,\m_p)$ whose value
  over $0$ is trivial and whose value over any other geometric point
  is non-trivial. In fact, there is an isomorphism of fppf functors
  $$\R^2f_\ast\m_p\simto\G_a$$
  where $f:C\to\spec k$ denotes the structure map. When
  multiplication by $p$ on the Picard scheme is ramified (or trivial,
  as in the case of $\G_a$), interesting behavior is possible. This
  makes the study of stable twisted sheaves of rank $1$ on families of
  $\m_p$-gerbes on elliptic surfaces somewhat interesting.
\end{example}

\section{Singular fibers of elliptic K3 surfaces}
\label{sec:singular-fibers}

I am indebted to Aise Johan de Jong for pointing out an error in an earlier version of this section, for telling me about extremal elliptic surfaces, and for suggesting the idea of using $k$-rational $j$-invariants to augment the locus of singular fibers in the extremal case (as used in Corollary \ref{sec:modul-interpr-isom-1} below).

Let $X\to\P^1$ be a proper morphism from a K3 surface with smooth geometrically connected geometric fiber of dimension $1$. Recall that there is an associated Jacobian fibration $\pi:J\to\P^1$ (see, e.g., Section 4 of Chapter 11 of \cite{huybrechts}) that is also a K3 surface, but that possesses a section $\sigma:\P^1\to J$. Using the classification of singular fibers and relative minimality of elliptic fibrations on K3 surfaces, we know that there is an \'etale surjection $U\to\P^1$ such that $X_U$ and $J_U$ are isomorphic (see Corollary 5.5 of Chapter 11 of \cite{huybrechts}). In particular, $X$ and $J$ are fiberwise isomorphic.

We will fix $X\to\P^1$ and $J\to\P^1$ in what follows. Recall the Shioda-Tate formula: if $r_t$ is the number of irreducible components of the fiber of $\pi$ over $t$, then
$$\rho(J) = 2 + \sum_t (r_t-1)+\rk J(k(t)),$$
the last term being the rank of the Mordell-Weil group, the group of rational points on the generic fiber of $\pi$.

\begin{prop}
  \label{sec:sing-fibers-ellipt}
  Either $\pi:J\to\P^1$ has at least $3$ singular fibers or the $j$-invariant of $J_\eta$ does not lie in $k\subset k(t)$.
\end{prop}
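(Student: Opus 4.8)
The plan is to count singular fibers against the degree of the discriminant. Let $\Delta$ be the discriminant of the minimal Weierstrass model of $\pi\colon J\to\P^1$, a section of $\mathfrak L^{\otimes 12}$ with $\mathfrak L=(R^1\pi_\ast\mathcal O_J)^\vee$. Since $J$ is a K3 surface carrying a section, it has no multiple fibers and $K_J\cong\mathcal O_J$, so $e(J)=24$, and the Grothendieck--Ogg--Shafarevich formula reads $24=e(J)=\sum_t v_t(\Delta)$, the sum running over the closed points $t\in\P^1$. By Ogg's formula each local term is $v_t(\Delta)=e(F_t)+\delta_t$, where $e(F_t)$ is the Euler number of the Kodaira type of the fiber $F_t$ and $\delta_t\ge 0$ is the wild (Swan) contribution, which vanishes in the tame case.

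First I would eliminate the multiplicative fibers. Assume the $j$-invariant of $J_\eta$ lies in $k$; as $J_\eta$ is smooth this $j$ is finite, so $J$ has potential good reduction at every $t$ and $j$ has no pole. Consequently no fiber is of multiplicative type $I_n$, nor of type $I_n^{\ast}$ with $n\ge 1$: every singular fiber is additive, and its Kodaira Euler number satisfies $e(F_t)\le 10$, with equality only for type $II^{\ast}$. In the tame situation --- in particular for $p\ge 5$, where every $\delta_t=0$ --- this finishes the argument at once: $24=\sum_t e(F_t)\le 10\cdot\#\{\text{singular fibers}\}$, so there are at least $\lceil 24/10\rceil=3$ of them.

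The hard part is the wild case $p\in\{2,3\}$, and I expect it to be the main obstacle. There $\delta_t$ can be strictly positive, so a single additive fiber of potential good reduction may have $v_t(\Delta)>10$, and the crude count no longer forces three fibers. Here I would pass to the isotrivial description: after a finite cover $B\to\P^1$ trivializing the twist, $J$ becomes birational to $(E_{j}\times B)/G$ with $G\subseteq\Aut(E_{j})$, the singular fibers sitting over the branch points of $B\to\P^1$, whose number is bounded by the number of singular fibers. Running Tate's algorithm fiber by fiber, the wild part $\delta_t$ is governed by the ramification needed to reach good reduction, hence by the order of $\Aut(E_{j})$ --- which divides $24$ in characteristic $2$ and $12$ in characteristic $3$. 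The genuinely delicate step, exactly the extremal regime flagged in the discussion of extremal elliptic surfaces above, is to turn this into a usable bound on $\sum_t v_t(\Delta)$ that is incompatible with the total $24$ for fewer than three branch points while $j$ stays constant; one must bound, or else classify the failures of, the wild contributions $\delta_t$ for isotrivial fibrations supported on one or two points in characteristics $2$ and $3$. This wild estimate, rather than the clean tame count, is where the real work lies, and it is precisely this extremal regime in which one is forced to supplement the singular fibers with $k$-rational values of $j$.
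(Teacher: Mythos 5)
Your tame-case argument is correct and complete for $p\geq 5$, and it is genuinely different from the paper's route: the paper never uses the constant-$j$ hypothesis in its counting step, but instead combines the Shioda--Tate formula with the bound $\chi(F_i,\Z_\ell)\leq n_i+1$ to force at least two singular fibers and extremality (Picard number $22$, Mordell--Weil rank $0$) when there are exactly two, and then quotes Theorem 6.1(1) of \cite{MR1785328}: an extremal elliptic surface with constant $j$-invariant is rational, contradicting $J$ being K3. Your count --- constant finite $j$ excludes types $I_n$ and $I_n^*$ for $n\geq 1$, every remaining Kodaira type has Euler number at most $10$, so $24\leq 10m$ --- is more elementary and, in the tame case, avoids the extremal-surfaces input entirely.

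However, the wild case you flag is a genuine gap, and it is worse than a missing estimate: it cannot be closed, because the statement itself fails in characteristic $2$ (and $3$). The flaw in your proposed repair is the claim that $\delta_t$ is governed by the order of $\Aut(E_j)$: in equal characteristic $p$ the Swan conductor of a $\Z/p$-cover is not bounded by the order of the group, since the Artin--Schreier extension $z^p-z=t^{-m}$ has break $m$ for any $m$ prime to $p$. Concretely, in characteristic $2$ take the elliptic K3 surface with affine Weierstrass equation $y^2+y=x^3+t^7$. Every fiber over $\A^1$ is smooth (the partial derivative in $y$ is $1$) and $j\equiv 0\in k$; at $t=\infty$ the model becomes $y^2+s^6y=x^3+s^5$, on which Tate's algorithm terminates at type $II^*$ (in particular the model is minimal), with $v_\infty(\Delta)=24$, so $\delta_\infty=24-e(II^*)=14$, which is exactly twice the break of the trivializing cover $z^2+z=t^7$. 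Since $\deg\Delta=24$ the surface is an elliptic K3 with a section, constant $j$-invariant, and a \emph{single} singular fiber; the analogous characteristic-$3$ example is $y^2=x^3-x+t^7$. This also exposes the same gap in the paper's own proof: its second displayed relation asserts $\sum_i\chi(F_i,\Z_\ell)=24$, which is the tame Euler-characteristic formula, whereas with wild ramification one only has $\sum_i\bigl(\chi(F_i,\Z_\ell)+\delta_i\bigr)=24$; consequently neither $m\geq 2$ nor extremality when $m=2$ follows, and the quoted theorem of \cite{MR1785328} never applies to the example above (it is not extremal). So your instinct that the wild estimate is ``where the real work lies'' is exactly right, but the correct conclusion is that the proposition needs a hypothesis excluding wild fibers (for instance $p\geq 5$, where your argument is a complete proof), rather than a cleverer bound on $\sum_t\delta_t$.
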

\begin{proof}
  Suppose there are $m$ singular fibers $F_1,\ldots,F_m$.  The Kodaira
  classification of singular fibers of minimal elliptic fibrations (Theorem IV.8.2 of \cite{MR1312368})
  shows that every singular fiber $F_i$ with $n_i$ components has
  $\ell$-adic Euler characteristic
$$\chi(F,\Z_\ell)\leq n_i + 1.$$
Since a smooth curve of genus $1$ has Euler characteristic $0$, we
have two inequalities
\begin{align*}
2 - 2m + \sum_{i=1}^m (n_i + 1) &= \rho(X)-\rk J(k(t)) \leq 22 \\
\sum_{i=1}^m\chi(F_i,\Z_\ell)=24 &\leq \sum_{i=1}^m (n_i+1),
\end{align*}
the first coming from the Shioda-Tate formula. Letting $S=\sum (n_i+1)$, this yields
$$24 \leq S\leq 22+2m.$$
We conclude that $m\geq 2$, and if $m=2$ then we must have $\rk J(k(t))=0$. In other words, when $m=2$ the surface $J$ is extremal. By Theorem 6.1(1) of \cite{MR1785328} (which, in spite of the paper's title, does not assume that the base field has characteristic $2$ or $3$), the latter implies that the $j$-invariant of $J_{k(t)}$ does not lie in $k$, for otherwise $J$ would be rational (and we know it is a K3 surface).
\end{proof}

\begin{cor}\label{cor:special-set}
Let $X$ be a supersingular K3 surface and $f:X\to\P^1$ a morphism with smooth connected geometric generic fiber of genus $1$. Then either
\begin{enumerate}
\item $f$ has at least three singular fibers, or
\item for any algebraically closed extension field $k\subset K$ and any element $\alpha\in k$, the set of points $t\in\P^1(K)$ such that $j(X_t)=\alpha$ lies in the image of the extension of scalars map 
$\P^1(k)\inj\P^1(K)$.
\end{enumerate}
\end{cor}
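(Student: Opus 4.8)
The plan is to deduce the dichotomy directly from Proposition \ref{sec:sing-fibers-ellipt}, transporting its conclusion from the Jacobian fibration $\pi\colon J\to\P^1$ to $f\colon X\to\P^1$ itself. First I would form the associated Jacobian $\pi\colon J\to\P^1$ as in the preceding discussion. Since $X$ and $J$ are fiberwise isomorphic, for every geometric point $t$ we have $X_t\cong J_t$; in particular $X_t$ fails to be smooth of genus $1$ precisely when $J_t$ does, so $f$ and $\pi$ have the same number of singular fibers, and the two fibrations determine the same $j$-invariant as a function on $\P^1$. In particular $j(X_\eta)=j(J_\eta)$ in $k(t)$.

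Proposition \ref{sec:sing-fibers-ellipt} now supplies two cases. If $\pi$ has at least three singular fibers, then so does $f$ by the previous paragraph, and this is alternative (1). Otherwise $j(J_\eta)$ — equivalently $j(X_\eta)$ — does not lie in $k\subset k(t)$, and I would argue that alternative (2) holds.

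For the second case, the $j$-invariant is a rational function on $\P^1$ defined over $k$, hence extends to a morphism $j\colon\P^1\to\P^1$ defined over $k$ (a rational map from a smooth curve to a projective variety is a morphism); the hypothesis $j(X_\eta)\notin k$ says exactly that this morphism is non-constant, hence finite. Now fix an algebraically closed extension $k\subset K$ and an element $\alpha\in k$. The fiber $j^{-1}(\alpha)$ is a finite closed subscheme of $\P^1_k$, and because $k$ is algebraically closed its support consists of $k$-rational points. Formation of this fiber commutes with base change, so $(j_K)^{-1}(\alpha)=(j^{-1}(\alpha))_K$ is supported on the images of those same $k$-points; consequently every $t\in\P^1(K)$ with $j(X_t)=\alpha$ lies in the image of $\P^1(k)\inj\P^1(K)$, which is alternative (2).

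The substantive content is entirely carried by Proposition \ref{sec:sing-fibers-ellipt}, so I do not expect a serious obstacle. The only points demanding care are the transfer of the singular-fiber count and of the generic $j$-invariant from $J$ to $X$, which rests on fiberwise isomorphy, and the elementary observation that a $k$-rational fiber of a finite morphism over an algebraically closed field acquires no new points after base change to $K$; beyond this bookkeeping the argument is immediate.
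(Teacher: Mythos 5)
Your proof is correct and follows essentially the same route as the paper: transfer the singular-fiber count and generic $j$-invariant from $X$ to the Jacobian $J$ via fiberwise isomorphy, invoke Proposition \ref{sec:sing-fibers-ellipt} for the dichotomy, and in the second case use that a non-constant $j$-map acquires no new points over $\alpha\in k$ after base change to $K$. The only cosmetic difference is that you extend $j$ to a finite morphism $\P^1\to\P^1$ and base-change its fiber, whereas the paper works with the quasi-finite rational map $j:\P^1\dashrightarrow\A^1$ and a Cartesian-square lemma for quasi-finite morphisms; the content is identical.
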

\begin{proof}
As noted in the first paragraph of this section, the singular fibers of $f$ are the same as the singular fibers of the Jacobian fibration $\pi:J\to\P^1$, and for any geometric point $t\to\P^1$ that avoids singular fibers, we have $j(X_t)=j(J_t)$. By Proposition \ref{sec:sing-fibers-ellipt}, if $\pi$ has only two singular fibers then the $j$-invariant map
$$j:\P^1\dashrightarrow\A^1$$
is non-constant, hence quasi-finite. But if $Z\to W$ is any quasi-finite morphism of schemes of finite type over an algebraically closed field $k$ and $k\subset K$ is an algebraically closed extension field, the diagram
$$\xymatrix{Z(k)\ar[r]\ar[d] & W(k)\ar[d]\\
Z(K)\ar[r] & W(K)}$$
is Cartesian. Indeed, the fiber of $Z\to W$ over a $k$-point $p\to W$ is a finite $k$-scheme, whose reduced structure must be isomorphic to a disjoint union of copies of $\spec k$ by the structure theory of finite-dimensional algebras over a field.
\end{proof}

\section{Proof of Artin's result}
\label{sec:rep}

Fix a supersingular K3 surface $X$ over $k$ and let $\pi:X\to\spec k$
denote the structure morphism. In this section, we prove Theorem \ref{thmA} using the stack of Azumaya algebras on $X$.

\begin{thm}\label{T:gpscheme}
  The big fppf sheaf $\R^2\pi_\ast\m_p$ is representable by a smooth group
  scheme over $k$ whose connected component is
  isomorphic to $\G_a$.
\end{thm}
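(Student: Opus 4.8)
The plan is to prove that $\R^2\pi_\ast\m_p$ is a smooth group scheme whose identity component is $\G_a$, using the moduli of Azumaya algebras (equivalently, $\mu_p$-gerbes) on the supersingular K3 surface $X$. The key structural input is that $\H^2(X,\G_m) = \Br(X) = 0$ for a supersingular K3 surface, so the $p$th-power Kummer sequence gives a four-term exact sequence relating $\Pic(X)$, multiplication by $p$, $\H^2(X,\mu_p)$, and the vanishing Brauer group. I want to upgrade this pointwise identification to a statement about the fppf sheaf on the big site, so the first step is to describe the functor of points: for a $k$-scheme $T$, the group $(\R^2\pi_\ast\mu_p)(T)$ should be the fppf-sheafification of $T \mapsto \H^2(X_T,\mu_p)$, which by the relative Kummer sequence and the vanishing of the relative Brauer group is controlled by $\Pic(X_T)/p\,\Pic(X_T)$ together with a coboundary contribution coming from how $p$ acts.

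\medskip

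Second, I would analyze multiplication by $p$ on the Picard functor. For a supersingular K3 surface the Picard scheme $\underline{\Pic}_{X/k}$ is \'etale with finitely generated geometric points (the N\'eron--Severi group), of maximal rank $22$, but crucially the \emph{relative} Kummer sequence over a base $T$ measures the cokernel and kernel of $p$ not just on geometric fibers but on the sheaf level. The appearance of $\G_a$ should emerge exactly from the failure of $p$ to be surjective in a flat family: infinitesimally, the obstruction to dividing a line bundle by $p$ over an Artinian thickening is governed by $\H^1(X,\ms O_X)=0$ and $\H^2(X,\ms O_X)=k$, and it is the nonreduced structure in $\H^2(X,\ms O_X)$ — via the exponential/logarithm comparison between $\mu_p$-cohomology and the additive group in characteristic $p$ — that produces the one-dimensional additive component. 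Concretely, I would compute the tangent space to $\R^2\pi_\ast\mu_p$ at the identity and identify it with $\H^2(X,\ms O_X)\cong k$, and check that the group scheme is smooth by exhibiting that all infinitesimal deformations of a gerbe class are unobstructed (again because the obstruction space $\H^3(X,\ms O_X)$ vanishes for a surface).

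\medskip

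Third, to pin down the connected component as precisely $\G_a$ rather than some other one-dimensional unipotent group, I would use the moduli-theoretic description directly: a $T$-point of $\ms C^\circ$ corresponds to a family of $\mu_p$-gerbes on $X\times T$ that is trivial on the special fiber, and trivializing such a family amounts to choosing an invertible twisted sheaf, whose deformations over $T$ are a torsor governed by $\H^1(X,\ms O_X)=0$ and whose obstruction sits in $\H^2(X,\ms O_X)=k$. Following the Grothendieck Existence argument already used in Proposition \ref{P:curve-coho}, the additive structure on this obstruction — it is literally an element of the one-dimensional $k$-vector space $\H^2(X,\ms O_X)$, and cup-product/Bockstein compatibility makes the group law additive — forces $\ms C^\circ \cong \G_a$ as group schemes. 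Representability and smoothness of the full sheaf then follow from standard Artin-type criteria for the stack of Azumaya algebras (or twisted sheaves) on a proper smooth scheme, combined with the identification of its automorphisms and deformation/obstruction theory.

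\medskip

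I expect the main obstacle to be the passage from the pointwise statement to the sheaf-theoretic one over an arbitrary base, i.e.\ genuinely controlling the relative $\R^2\pi_\ast\mu_p$ as a representable functor and proving smoothness uniformly, rather than just identifying geometric points. The delicate point is that $\Pic(X_T)/p$ and the Brauer-type contribution interact nontrivially over nonreduced bases — precisely the phenomenon flagged in the cuspidal-cubic example — so the heart of the argument is showing that the additive (nonreduced, $\G_a$) direction is captured faithfully by the deformation theory of twisted line bundles, with obstruction space $\H^2(X,\ms O_X)=k$ supplying the affine line and $\H^1(X,\ms O_X)=0$ guaranteeing there are no extra moduli. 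Making the group law match that of $\G_a$ on the nose, and verifying representability by a smooth scheme (as opposed to a mere algebraic space or sheaf), is where the real work lies.
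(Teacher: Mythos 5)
Your opening premise is false, and it directly contradicts the theorem you are trying to prove. A supersingular K3 surface does \emph{not} satisfy $\Br(X)=0$; the content of Artin's theorem is exactly the opposite, namely that the $p$-torsion Brauer group is enormous. Indeed, the $k$-points of $\ms C=\R^2\pi_\ast\m_p$ compute $\H^2(X,\m_p)$ (the relevant higher fppf cohomology of the algebraically closed point vanishes), so the conclusion $\ms C^\circ\cong\G_a$ forces $\H^2(X,\m_p)$ to contain a copy of $k$, while $\Pic(X)/p\Pic(X)\cong(\Z/p\Z)^{22}$ is finite; the Kummer sequence then makes $\Br(X)[p]$ infinite. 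Conversely, if $\Br(X)$ (and its relative versions) vanished as you assume, the Kummer sequence would identify $\R^2\pi_\ast\m_p$ with the finite \'etale group $\Pic(X)/p\Pic(X)$, which has no positive-dimensional connected component at all. The Tsen-type vanishing $\H^2(Z,\G_m)=0$ that the paper exploits holds for \emph{curves} (Lemma \ref{sec:few-remarks-cohom-2}) and over complete dvrs, not for the K3 surface itself, so the ``four-term exact sequence'' on which your functor-of-points description rests does not exist.

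There is a second, independent gap: your identification of $\ms C^\circ$ with $\G_a$ uses only the Hodge numbers $\H^1(X,\ms O)=0$, $\H^2(X,\ms O)=k$, $\H^3(X,\ms O)=0$, which hold for \emph{every} K3 surface. Since the conclusion fails for K3 surfaces of finite height (for an ordinary K3 the formal completion of $\ms C$ at the identity is $\widehat{\G_m}[p]=\m_p$, which is not smooth), an argument using only those inputs proves too much; supersingularity must enter somewhere, and in your outline it never does (except through the false Brauer-vanishing claim). The paper uses it precisely here: supersingularity in Artin's sense means the formal Brauer group is $\widehat{\G_a}$, which identifies the completion of $\ms C$ at the identity with $\widehat{\G_a}$; then, because $\m_p$ is killed by $p$, the whole sheaf $\ms C$ is $p$-torsion, and a smooth connected one-dimensional $p$-torsion group scheme over $k$ must be $\G_a$ --- no Bockstein or cup-product ``additivity of the obstruction'' is needed, and no such argument could distinguish the heights. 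Finally, the representability and smoothness that you defer to ``standard Artin-type criteria'' constitute the actual work of the paper's proof: Proposition \ref{P:diag} shows the diagonal of $\ms C$ is representable by closed immersions, by proving that the space of invertible twisted sheaves with trivialized $p$-th power is a \emph{proper monomorphism} over the base (Gabber's theorem, descent, a valuative criterion via reflexive extensions), and Proposition \ref{P:surj-by-az} produces a smooth representable cover of $\ms C$ by the stack of degree-$p$ Azumaya algebras via a gerbe-with-coherent-band argument. Your proposal points at this stack but supplies none of these steps.
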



We will write $\ms C=\R^2\pi_\ast\m_p$ in the remainder of this section.

\begin{prop}
  \label{P:diag}
  The diagonal
  $$\ms C\to\ms C\times\ms C$$
  is representable by closed immersions of finite presentation.
\end{prop}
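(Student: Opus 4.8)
The plan is to reduce the assertion for the diagonal to the analogous assertion for the identity section, and then to establish the latter by a specialization argument carried out with twisted sheaves, exploiting the fact that here the ambient family is \emph{constant}.

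\textbf{Reduction to the identity section.} Since $\ms C=\R^2\pi_\ast\m_p$ is an fppf sheaf of abelian groups, the diagonal $\ms C\to\ms C\times\ms C$ is the base change of the identity section $e\colon\Spec k\to\ms C$ along the subtraction morphism $\ms C\times\ms C\to\ms C$, $(a,b)\mapsto a-b$. As both representability and the property of being a closed immersion of finite presentation are stable under base change, it suffices to show that $e$ is representable by closed immersions of finite presentation. Unwinding this, I must show: for every scheme $T$ and every section $s\in\ms C(T)$, the subfunctor $Z\subseteq T$ on which $s$ vanishes is a closed subscheme of finite presentation.

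\textbf{Reduction to a discrete valuation ring with an honest gerbe.} The formation of $Z$ commutes with base change, and closed immersions satisfy fppf descent, so I may argue fppf-locally on $T$; by absolute noetherian approximation (using that $X/k$ is of finite type) I reduce to $T$ of finite type over $k$, after an fppf cover of which the section $s$ is represented by a genuine class $\alpha\in\H^2(X_T,\m_p)$, i.e.\ by a $\m_p$-gerbe $\ms X\to X_T$. On such a $T$ the set $Z$ is constructible, and a constructible set is closed as soon as it is stable under specialization (finite presentation of the resulting closed immersion is then automatic). Testing specialization along maps $\Spec\mathcal O\to T$ from complete discrete valuation rings with algebraically closed residue field, and pulling back $\alpha$, I reduce to the following statement: given $\alpha\in\H^2(X_{\mathcal O},\m_p)$ with $X_{\mathcal O}=X\otimes\mathcal O$ the constant family, if $\alpha_\eta=0$ in $\H^2(X_\eta,\m_p)$ then $\alpha_0=0$ in $\H^2(X_0,\m_p)$. (Over the algebraically closed residue and function fields the comparison maps $\H^2(X_F,\m_p)\to\ms C(F)$ are isomorphisms, since $\H^{>0}_{\mathrm{fppf}}(F,\m_p)=0$, so vanishing of the section and of the honest class coincide.)

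\textbf{The geometric heart.} A $\m_p$-gerbe has trivial class in $\H^2(-,\m_p)$ exactly when it carries an invertible twisted sheaf $M$ of weight $1$ together with an isomorphism $M^{\otimes p}\simto\mathcal O$. Since $\alpha_\eta=0$, such a datum $(M_\eta,M_\eta^{\otimes p}\simto\mathcal O_{X_\eta})$ exists on $\ms X_\eta$. The total space $X_{\mathcal O}$ is regular of dimension $3$ (as $X/k$ is smooth and $\mathcal O$ regular), hence so is the $\m_p$-gerbe $\ms X_{\mathcal O}$, and in particular it is locally factorial; I extend $M_\eta$ across the special fibre by taking the reflexive hull of its pushforward along $\ms X_\eta\inj\ms X_{\mathcal O}$, obtaining a rank-$1$ reflexive weight-$1$ twisted sheaf $M$, which on a locally factorial stack is invertible. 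Now $M^{\otimes p}$ is an honest line bundle on $X_{\mathcal O}$ whose restriction to $X_\eta$ is trivial. Here the constancy of the family is decisive: the special fibre $X_0=V(t)\inj X_{\mathcal O}$ is a principal, irreducible divisor, so the restriction map $\Pic(X_{\mathcal O})\to\Pic(X_\eta)$ is injective (its kernel is generated by the single component $X_0$, whose class vanishes), forcing $M^{\otimes p}\simto\mathcal O_{X_{\mathcal O}}$. Restricting to $X_0$ yields an invertible weight-$1$ twisted sheaf $M_0$ on $\ms X_0$ with $M_0^{\otimes p}\simto\mathcal O_{X_0}$, whence $\alpha_0=0$, as required.

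\textbf{Main obstacle.} The delicate point is the extension step: guaranteeing that the reflexive hull of $M_\eta$ remains invertible \emph{along the special fibre} and retains weight $1$, so that the induced $M_0$ genuinely trivializes $\alpha_0$ in $\H^2(X_0,\m_p)$ rather than merely its image in the Brauer group. This is precisely where the local factoriality of the regular gerbe $\ms X_{\mathcal O}$ does the work — étale-locally $\ms X_{\mathcal O}$ is $\B\m_p\times\Spec A$ with $A$ regular, and a reflexive rank-$1$ graded piece over a regular ring is invertible — paralleling the maximal-order/Grothendieck-existence mechanism used in the proof of Proposition \ref{P:curve-coho}. Everything else is formal: the group-theoretic reduction, the limit and descent arguments, and the elementary computation of $\Pic$ of the constant family via its single vertical divisor.
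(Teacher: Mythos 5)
Your two good ideas are both sound: the reduction of the diagonal to the identity section by translation is exactly how the paper begins, and your ``geometric heart'' --- extending a $p$-torsion invertible twisted sheaf across the closed fibre of $\ms X_{\mathcal O}$ by a reflexive hull on a locally factorial gerbe, then using injectivity of $\Pic(X_{\mathcal O})\to\Pic(X_\eta)$ --- is in substance the argument the paper uses too. But the bridge you build between them does not carry the load. What must be proved is that the subfunctor $Z\subseteq T$, sending $S\to T$ to a point exactly when $a_S=0$, is \emph{representable} by a closed subscheme of finite presentation. Your strategy (``$Z$ is constructible and stable under specialization, hence closed'') is a statement about the underlying \emph{set} of points of $T$; even if carried out completely it produces a closed subset, with no scheme structure and no representability. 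These are genuinely different things: for the sheaf $\G_a$ (which is what $\ms C^\circ$ turns out to be), the section $x^2$ over $\A^1=\spec k[x]$ has vanishing functor represented by $\spec k[x]/(x^2)$, not by the reduced point, and no argument that only probes geometric points and specializations of $T$ can detect this infinitesimal structure. Since the entire content of Proposition \ref{P:diag} is scheme-theoretic separation for $\ms C$ --- it is the input for representability of $\ms C$ itself --- this is the heart of the matter, not a technicality. Relatedly, your assertion that $Z$ is constructible is unproven and is not a standard fact: fppf cohomology of $\m_p$ in characteristic $p$ enjoys none of the finiteness or constructibility theorems of the prime-to-$p$ theory (the paper's cuspidal-cubic example, with $\R^2 f_\ast\m_p\simto\G_a$, is there precisely as a warning); in effect, constructibility of $Z$ is part of what is being proven. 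Note also that on a general $S$ the condition $a_S=0$ means $\alpha|_{X_S}$ agrees with a class pulled back from $S$, not that it vanishes; your pointwise reformulation is only valid at geometric points, which is another symptom of the same gap.

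The paper closes exactly this hole by manufacturing the scheme structure from a moduli problem: it forms the stack $\ms P$ of invertible $\ms X$-twisted sheaves $\ms L$ with $\ms L^{\tensor p}\simto\ms O$, whose associated space $P\to T$ is a monomorphism locally of finite presentation because $\Pic_X$ is torsion free; it proves $P\to T$ is of finite type (Gabber's theorem plus descent along a Brauer--Severi scheme) and proper --- the valuative criterion being precisely your reflexive-hull argument --- so that $P\to T$, a proper monomorphism, is a closed immersion; and it then identifies $P$ with $Z(a)$, after correcting $\alpha$ by a class pulled back from $P$. To salvage your write-up, keep your dvr lemma but move it to where it belongs (the properness step), and replace the topological argument by a moduli space whose functor of points literally coincides with $Z$.
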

\begin{proof}
  Let $a,b:T\to\ms C$ be two maps corresponding to classes
  $$a,b\in\H^0(T,\R^2\pi_\ast\m_p).$$
  To prove that the locus where $a=b$ is represented by a closed
  subscheme of $T$, it suffices by translation to assume that $b=0$
  and prove that the functor
  $Z(a)$ sending a $T$-scheme $S\to T$ to $\emptyset$ if $a_S\neq 0$
  and $\{\emptyset\}$ otherwise is representable by a closed subscheme
  $Z_a\subset T$.

  First, suppose $a$ is the image of a class $\alpha\in\H^2(X_T,\m_p)$
  corresponding to a $\m_p$-gerbe $\ms X\to X_T$. Since
  $\R^1\pi_\ast\m_p=0$, we see that the functor $Z(a)$ parametrizes
  schemes $S\to T$ such that there is an $\alpha'\in\H^2(S,\m_p)$
  with $$\alpha'|_{X_S}=\alpha|_{X_S}.$$

  Let $\ms P\to T$ be the stack whose objects over $S\to T$ are
  families of $\ms X_S$-twisted
  invertible sheaves $\ms L$ together with isomorphisms $\ms
  L^{\tensor p}\simto\ms O_{\ms X_S}$. The stack $\ms P$ is a $\m_p$-gerbe over a
  quasi-separated algebraic space $P\to T$ that is locally of finite
  presentation (Proposition 2.3.1.1 of \cite{MR2309155} and Section C.23 of \cite{MR2786662}). Moreover, since $\Pic_X$ is torsion free, the
  natural map $P\to T$ is a monomorphism.
  Note that if we change $\alpha$ by the preimage
  of a class $\alpha'\in\H^2(T,\m_p)$ we do not change $P$ 
  (but we do change the class of the gerbe $\ms P\to P$ by $\alpha'$).

  \begin{lem}
    The algebraic space $P\to T$ is a closed immersion of finite presentation.
  \end{lem}
  \begin{proof}
    First, let us show that $P$ is of finite presentation.  It
    suffices to show that $P$ is quasi-compact under the assumption
    that $T$ is affine. Moreover, since $\ms C$ is locally of finite
    presentation, we may assume that $T$ is Noetherian. By Gabber's
    Theorem (Theorem 1.1 of \cite{dejong-gabber}) there is a Brauer-Severi scheme $V\to X$ such that
    $\alpha|_V$ has trivial Brauer class, i.e., so that there is an
    invertible sheaf $L\in\Pic(V)$ satisfying
    $$\alpha|_V=[L]^{1/p}\in\H^2(V,\m_p),$$
    where $[L]^{1/p}$ denotes the $\m_p$-gerbe over $V$ parametrizing
    $p$th roots of $L$. Writing $$\ms V=\ms X\times_{X_T} V,$$ we know
    from the isomorphism $\ms V\cong [L]^{1/p}$ that there is an
    invertible $\ms V$-twisted sheaf $\ms L$ such that
    $$\ms L^{\tensor p}\cong L.$$

    Let $W$ denote the algebraic space parametrizing invertible $\ms
    V$-twisted sheaves whose $p$th tensor powers are trivial. By
    the argument in the preceding paragraph, tensoring with $\ms
    L^{\vee}$ defines an isomorphism between $W$ and the fiber of the
    $p$th power map
    $$\Pic_{V/T}\to\Pic_{V/T}$$
    over $[L^\vee]$. Since the $p$th power map is a closed immersion
    ($X$ being K3), we
    see that $W$ is of finite type.  The following lemma then applies
    to show that $P$ is of finite type.

    \begin{lem}
      The pullback map $\Pic^{\tw}_{\ms X/T}\to\Pic^{\tw}_{\ms V/T}$
      is of finite type.
    \end{lem}
    \begin{proof}
      It suffices to prove the corresponding results for the stacks of
      invertible twisted sheaves. Since $V\times_X V$ and $V\times_X
      V\times_X V$ are proper over $T$ this follows from descent
      theory: the category of invertible $\ms X$-twisted sheaves is
      equivalent to the category of invertibe $\ms V$-twisted sheaves
      with a descent datum on $\ms V\times_{\ms X}\ms V$. Thus, the
      fiber over $L$ on $\ms V$ is a locally closed subspace of
      $$\Hom_{\ms V\times_{\ms X}\ms V}(\pr_1^\ast L,\pr_2^\ast L).$$
      Since the latter is of finite type (in fact, a cone in an affine
      bundle), the result follows.
    \end{proof}

    We claim that $P$ is proper over $T$. To see this, we may use the
    fact that it is of finite presentation (and everything is of
    formation compatible with base change) to reduce to the case in
    which $T$ is Noetherian, and then we need only check the valuative
    criterion over dvrs. Thus, suppose $E$ is a dvr with fraction
    field $F$ and $p:\spec F\to P$ is a point. Replacing $E$ by a finite
    extension, we may assume that $p$ comes from an invertible $\ms
    X_F$-twisted sheaf $\ms L$. Taking a reflexive extension and using
    the fact that $\ms X_E$ is locally factorial, we see that $\ms L$
    extends to an invertible $\ms X_E$-twisted sheaf $\ms L_E$. Since
    $\Pic_X$ is separated, it follows that $\ms L_E$ induces the
    unique point of $P$ over $E$ inducing $p$.

    Since a proper monomorphism is a closed immersion, we are done.
  \end{proof}

We now claim that the locus $Z(a)\subset T$ is represented by the
closed immersion $P\to T$.

\begin{lem}
  A class $\alpha\in\H^2(X_T,\m_p)$ represented by a $\m_p$-gerbe $\ms
  X\to X_T$ is trivial if and only there is an invertible $\ms
  X$-twisted sheaf $\ms L$ such that $$\ms L^{\tensor p}\cong\ms O_{\ms
  X}.$$
\end{lem}
\begin{proof}
  Given a scheme $S$, the gerbe $\B\m_{p, S}\to S$ represents the stack of pairs $(\ms L, \phi)$ with $\ms L$ an invertible sheaf and $\phi:\ms L^{\tensor p}\simto\ms O$ is a trivialization. By definition, $\alpha$ is trivial if and only if there is an isomorphism of stacks $\ms X\simto\B\m_{p, X_{T}}$. The result follows.
\end{proof}

Now consider the $\m_p$-gerbe $\ms P\to P$. Subtracting the pullback
from $\alpha$ yields a class such that the associated Picard stack
$\ms P'\to P$ is trivial, whence there is an invertible twisted sheaf
with trivial $p$th power. In other words,
$$a|_P=0\in\ms C(P).$$
On the other hand, if $a|_S=0$ then up to changing $a$ by the pullback
of a class from $S$, there is an invertible twisted with trivial $p$th
power. But this says precisely that $S$ factors through the moduli
space $P$, as desired.
\end{proof}

Let $\Az$ be the $k$-stack whose objects over $T$ are Azumaya
algebras $\ms A$ of degree $p$ on $X_T$ such that for every geometric
point $t\to T$, we have
$$\ker(\Tr:\H^2(X,\ms A)\to\H^2(X,\ms O))=0.$$
It is well known that $\Az$ is an Artin stack locally of finite type over $k$
(see, for example, Lemma 3.3.1 of \cite{MR2579390}, and note that the trace condition is open). 
There is a morphism of stacks
$$\chi:\Az\to\R^2\pi_\ast\m_p$$
(with the latter viewed as a stack with no non-trivial automorphisms
in fiber categories)
given as follows. Any family $\ms A\in\Az_T$ has a corresponding class
$$[\ms A]\in\H^1(X_T,\PGL_p).$$
The non-Abelian coboundary map yields a class in $\H^2(X_T,\m_p)$
which has a canonical image
$$\chi(\ms A)\in\H^0(T,\R^2\pi_\ast\m_p).$$

\begin{prop}
  \label{P:surj-by-az}
  The morphism $\chi$ described above is representable by smooth Artin stacks.
\end{prop}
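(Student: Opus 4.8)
The plan is to verify representability and relative smoothness fppf-locally on the target $\ms C=\R^2\pi_\ast\m_p$, to recognize the fibers of $\chi$ as a moduli problem of twisted sheaves, and to extract smoothness from the deformation theory of sheaves on the K3 surface $X$, where the defining trace condition on $\Az$ is precisely what forces the obstruction space to vanish. Since both ``is an Artin stack'' and ``the structure morphism is smooth'' are fppf-local on the base of a test object, I would begin with a scheme $T\to\ms C$ corresponding to a section $a\in\H^0(T,\R^2\pi_\ast\m_p)$, pass to an fppf cover of $T$, and thereby assume that $a$ is the image of an honest class $\alpha\in\H^2(X_T,\m_p)$; fix a $\m_p$-gerbe $\ms X\to X_T$ representing $\alpha$. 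It then suffices to prove that $\Az\times_{\ms C}T$ is an Artin stack, smooth over $T$.

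The comparison with twisted sheaves is the mechanism. For a locally free $\ms X$-twisted sheaf $\ms E$ of rank $p$, the sheaf $\send(\ms E)=\ms E^\vee\otimes\ms E$ is untwisted, hence an honest Azumaya algebra of degree $p$ on $X_T$ whose coboundary class has image $a$ in $\ms C$; conversely, every Azumaya algebra whose class maps to $a$ arises this way after an fppf base change, by the same Gabber/twisted-sheaf formalism already used in the proof of Proposition \ref{P:diag} and in \cite{dejong-gabber, MR2388554}. The ambiguities $\send(\ms E)\cong\send(\ms E\otimes L)$ for $L\in\Pic(X_T)$ together with the scalar automorphisms $\G_m$ of each $\ms E$ exhibit $\Az\times_{\ms C}T$ as obtained from the stack of rank-$p$ locally free $\ms X$-twisted sheaves by rigidifying $\G_m$ and passing to the $\Pic$-quotient. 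The stack of such twisted sheaves is an Artin stack locally of finite presentation over $T$ (Proposition 2.3.1.1 of \cite{MR2309155}), and these two operations preserve that property, which yields representability of $\chi$ by Artin stacks.

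For smoothness I would invoke deformation theory. Since $\ms E$ is locally free, $\Ext^i_{\ms X}(\ms E,\ms E)=\H^i(X_T,\send(\ms E))=\H^i(X_T,\ms A)$, and the categorical trace on $\Ext^2$ matches the reduced trace on $\H^2(X,\ms A)$. The infinitesimal deformations of $\ms E$ relative to $T$ are controlled by $\Ext^1$, with obstructions in $\Ext^2$; crucially the obstruction class is traceless, since its trace is the obstruction to deforming $\det\ms E$, which vanishes because $\Pic_X$ is smooth ($\H^1(X,\ms O)=0$). Thus the obstruction lies in $\Ext^2_0(\ms E,\ms E)=\ker\bigl(\Tr:\H^2(X,\ms A)\to\H^2(X,\ms O)\bigr)$, and the condition defining $\Az$ forces this kernel to vanish at every geometric point. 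The relative obstruction space therefore vanishes fiberwise, so by the infinitesimal lifting criterion $\Az\times_{\ms C}T\to T$ is formally smooth; being locally of finite presentation, it is smooth.

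The main obstacle is the bookkeeping of the second paragraph: making the equivalence between the fiber of $\chi$ and the twisted-sheaf moduli stack precise, tracking the discrepancy between $\chi(\send(\ms E))$ and $\alpha$ by classes pulled back from the base, and checking that the categorical trace on $\Ext^2(\ms E,\ms E)$ is identified with the reduced trace on $\H^2(X,\ms A)$, so that the hypothesis defining $\Az$ transports exactly to vanishing of the obstruction space. Once these identifications are in place, representability and smoothness follow from the standard inputs assembled above.
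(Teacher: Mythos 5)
Your proposal is correct in substance and rests on the same core mechanism as the paper --- identifying degree-$p$ Azumaya algebras with $\send$ of rank-$p$ locally free twisted sheaves and using the trace condition defining $\Az$ to kill the obstruction space --- but it packages the two halves of the statement quite differently. For representability, the paper does not identify the fibers of $\chi$ at all: it observes that since $\Az$ is an Artin stack and the diagonal of $\ms C$ is representable by closed immersions (Proposition \ref{P:diag}, which was proved precisely for this purpose), the fiber $\Az\times_{\ms C}T$ is automatically a closed substack of $\Az\times T$, hence algebraic; you instead re-derive algebraicity by exhibiting the fiber as a rigidified $\Pic$-quotient of the twisted-sheaf stack after an fppf cover of $T$, which forces you to invoke fppf descent of algebraicity for stacks and to carry out the Morita/Skolem--Noether comparison globally, including the discrepancy classes pulled back from the base --- exactly the ``bookkeeping'' you flag as the main obstacle. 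For smoothness, the paper never needs that global comparison: it packages the lifting problem for a square-zero extension $\Spec A'\to\Spec A$ into a stack of lifts $\ms S$ over $\Spec A$, checks over strictly Henselian local rings (where $\H^2(U,\m_p)=0$, so the section of $\ms C$ genuinely lifts to a gerbe and Azumaya algebras become $\send(V)$ with $V$ of trivialized determinant, placing the obstruction directly in $\ker(\Tr)$) that $\ms S$ is an fppf gerbe banded by the coherent sheaf $\R^1\pi_\ast\send(V)$, and then concludes from neutrality of gerbes with coherent band over affine schemes; your route instead deforms $\ms E$ freely and argues the obstruction is traceless because $\det\ms E$ (an untwisted line bundle on the constant K3 family) deforms unobstructedly --- which is fine, since $\H^1(X,\ms O)=0$ makes line bundles on $X\times\Spec A$ rigid --- and then reduces smoothness along the smooth surjection from the twisted-sheaf stack. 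In short: your argument gives a stronger structural description of the fibers of $\chi$, at the cost of heavier descent machinery and a delicate global identification; the paper's gerbe-with-coherent-band trick is designed exactly to avoid that identification, using twisted sheaves only over local rings where everything trivializes, and it recycles Proposition \ref{P:diag} to make representability a one-line formality.
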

\begin{proof}
  Since $\Az$ is locally of finite type over $k$ and the diagonal of
  $\ms C$ is representable by closed immersions of finite
  presentation, it suffices to show that $\chi$ is formally smooth.
  Suppose $A'\to A$ is a square-zero extension of Noetherian rings and
  consider a diaagram of solid arrows
  $$\xymatrix{\Spec A'\ar[r]\ar[d] & \Az\ar[d]\\
    \Spec A\ar[r]\ar@{-->}[ur] & \ms C.}$$
  We wish to show that we can produce the dashed diagonal
  arrow. Define a stack $\ms S$ on $\Spec A$ whose objects over an
  $A$-scheme $U\to\spec A$ are dashed arrows in the restricted diagram
    $$\xymatrix{U'\ar[r]\ar[d] & \Az\ar[d]\\
      U\ar[r]\ar@{-->}[ur] & \ms C,}$$ where $U'=U\tensor_A A'$, and
    whose morphisms are isomorphisms between the objects of $\Az$ over
    $U$ restricting to the identity on the restrictions to $U'$.

    \begin{claim}
      The stack $\ms S\to\Spec A$ is an fppf gerbe with coherent band.
    \end{claim}
    \begin{proof}
      First, it is clear that $\ms S$ is locally of finite
      presentation. Suppose $U$ is the spectrum of a complete local
      Noetherian ring with algebraically closed residue field. Then
      $\H^2(U,\m_p)=0$ and the section $U\to\ms C$ is equivalent to a
      class $$\alpha\in\H^2(X_U,\m_p).$$ Let $\ms X\to X_U$ be a
      $\m_p$-gerbe representing $\alpha$ and write $\ms X'=\ms
      X\tensor_A A'$ the restriction of $\ms X$ to $U'$. An object of
      $\Az_{U'}$ is then identified with $\send(V)$ where $V$ is a
      locally free $\ms X'$-twisted sheaf of rank $p$ with trivial
      determinant. The obstruction to deforming such a sheaf lies in
      $$\ker(\H^2(X_{U'},\send(V)\tensor I)\to\H^2(X_{U'},\ms O\tensor
      I)),$$
      and deformations are a pseudo-torsor under
      $$\ker(\H^1(X_{U'},\send(V)\tensor I)\to\H^1(X_{U'},\ms O\tensor
      I))=\H^1(X_{U'}\send(V)\tensor I).$$
      Standard arguments starting from the assumption on the geometric
      points of $\Az$ show that the obstruction group is trivial,
      while the band is the coherent sheaf $\R^1\pi_\ast\send(V)$, as desired.
    \end{proof}
Since any gerbe with coherent band over an affine scheme is neutral,
we conclude that $\ms S$ has a section. In other words, a dashed arrow
exists, as desired.
\end{proof}

\begin{proof}[Proof of Theorem \ref{T:gpscheme}]
  Using Proposition \ref{P:surj-by-az}, a smooth cover $B\to\Az$ gives
  rise a to a smooth cover $B\to\ms C$. Thus, by Proposition
  \ref{P:diag}, $\ms C$ is a separated algebraic group-space locally
  of finite type. Since $k$ is algebraically closed, it follows that
  $\ms C$ is in fact a group scheme locally of finite type. Finally,
  since $X$ is supersingular we have that the completion of $\ms C$ at
  the identity section is isomorphic to $\widehat{\G_a}$, which is
  formally smooth and $p$-torsion. It follows that $\ms C$ is smooth
  over $k$ with $1$-dimensional $p$-torsion connected component. The
  only $p$-torsion smooth $1$-dimensional $k$-group scheme is $\G_a$, completing the proof.
\end{proof}

\section{Modular interpretation of the isomorphism $\operatorname{Br}\cong\Sha$}
\label{sec:mod-interp}

Fix an elliptic K3 surface $f:X\to\P^1$ over the algebraically closed field
$k$.
Given a $\m_n$-gerbe $\ms X\to
X$, let $$a(\ms X)\in\frac{1}{n}\Z/\Z$$ be the unique element that
corrsponds to the
cohomology class of the restriction of $\ms X$ to any smooth fiber
$E\subset X$ of $f$ under the natural isomorphism
$$\frac{1}{n}\Z/\Z\simto\Z/n\Z.$$ Fix an ample divisor $H\subset X$.

Recall the following special case of a theorem of Artin and Tate (Theorem 3.1 of \cite{MR1610977}).

\begin{thm}[Artin-Tate]\label{sec:modul-interpr-isom}
  The edge map in the $E^2$ term of the Leray spectral sequence for
  $\G_m$ on $f:X\to\P^1$ yields an isomorphism
  $$\Br(X)\to\Sha(k(t),\Pic_{X_{k(t)}/k(t)}),$$
  resulting in a natural surjection
  $$\Sha(k(t),\Jac(X_{k(t)}))\surj\Br(X)$$
  with kernel isomorphic to $\Z/i\Z$, where $i$ is the index of the generic fiber $X_{k(t)}$ over $k(t)$.
\end{thm}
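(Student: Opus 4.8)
The plan is to extract the Brauer group from the Leray spectral sequence for $\G_m$ along $f$, and then reinterpret the resulting cohomology of the relative Picard sheaf in Tate--Shafarevich terms. First I would write down
$$E_2^{p,q}=\H^p(\P^1,\R^qf_\ast\G_m)\Longrightarrow\H^{p+q}(X,\G_m)$$
and identify the bottom rows. Because $f$ is proper with geometrically connected fibres, $\R^0f_\ast\G_m=\G_m$ and $\R^1f_\ast\G_m=\Pic_{X/\P^1}$. The crucial vanishing is $\R^2f_\ast\G_m=0$: its stalks are Brauer groups of the relative curve over the strict henselizations of the local rings of $\P^1$, which vanish by Tsen's theorem. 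On the base, since $k$ is algebraically closed and $\P^1$ is a smooth proper curve, $\H^2(\P^1,\G_m)=\Br(\P^1)=0$ and $\H^3(\P^1,\G_m)=0$. Thus $E_2^{2,0}=E_2^{0,2}=0$, and $E_2^{1,1}$ supports neither an incoming nor an outgoing differential (as $E_2^{3,0}=0$), so $E_\infty^{1,1}=E_2^{1,1}$ is the only nonzero graded piece of $\H^2(X,\G_m)$. Identifying $\Br(X)=\H^2(X,\G_m)$ (valid since $X$ is a regular surface), the edge map of the spectral sequence gives
$$\Br(X)\simto\H^1(\P^1,\Pic_{X/\P^1}).$$

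Second, I would identify $\H^1(\P^1,\Pic_{X/\P^1})$ with $\Sha(k(t),\Pic_{X_{k(t)}/k(t)})$. Restriction to the generic point $\eta=\spec k(t)$ gives a map $\H^1(\P^1,\Pic_{X/\P^1})\to\H^1(k(t),\Pic_{X_{k(t)}/k(t)})$, and I would show that its image is exactly the subgroup of classes that become trivial in every localization at a closed point $s\in\P^1$, i.e. the Tate--Shafarevich group. The mechanism is a localization (excision) argument comparing the sheaf $\Pic_{X/\P^1}$ on $\P^1$ with $j_\ast$ of its generic fibre, for $j\colon\eta\to\P^1$ the inclusion: the discrepancy is concentrated at the finitely many closed points carrying singular fibres and is governed by the component groups of the N\'eron models there, which is where the Kodaira classification of singular fibres enters. \emph{This is the step I expect to be the main obstacle:} one must check that these local contributions match precisely the local conditions cutting out $\Sha$, so that the global $\H^1$ maps isomorphically onto $\Sha$ rather than onto the full Galois cohomology $\H^1(k(t),\Pic_{X_{k(t)}/k(t)})$.

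Finally, to produce the surjection I would use the degree sequence on the generic fibre. As $X_{k(t)}$ has genus $1$, its Picard scheme sits in a short exact sequence of $\Gal(k(t))$-modules
$$0\to\Jac(X_{k(t)})\to\Pic_{X_{k(t)}/k(t)}\to\Z\to0,$$
with trivial Galois action on the degree quotient $\Z$. Since $\H^1(k(t),\Z)=\Hom_{\mathrm{cont}}(\Gal(k(t)),\Z)=0$, the long exact cohomology sequence yields a surjection $\H^1(k(t),\Jac(X_{k(t)}))\surj\H^1(k(t),\Pic_{X_{k(t)}/k(t)})$ whose kernel is the cokernel of the degree map $\H^0(k(t),\Pic_{X_{k(t)}/k(t)})\to\Z$. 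That image is $i\Z$, where $i$ is the index of $X_{k(t)}$ (the gcd of degrees of $k(t)$-rational zero-cycles), so the kernel is $\Z/i\Z$, generated by the class of the torsor $\Pic^1_{X_{k(t)}/k(t)}$, i.e. of $X_{k(t)}$ itself.

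It remains to pass from $\H^1$ to $\Sha$ on both sides. Restricting the surjection above to everywhere-locally-trivial classes, and verifying that the kernel $\Z/i\Z$ stays inside $\Sha(k(t),\Jac(X_{k(t)}))$ — which reduces to the local triviality of the torsor $\Pic^1_{X_{k(t)}/k(t)}=X_{k(t)}$, checked through the same singular-fibre analysis — yields
$$\Sha(k(t),\Jac(X_{k(t)}))\surj\Sha(k(t),\Pic_{X_{k(t)}/k(t)})\simto\Br(X)$$
with kernel $\Z/i\Z$, completing the proof.
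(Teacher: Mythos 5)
Your outline reconstructs the classical Artin--Tate--Grothendieck argument, and in that sense it cannot diverge much from the paper: the paper offers no independent proof at all, only a citation to Grothendieck's \emph{Le groupe de Brauer III} (Proposition 4.5 and the ``cas particulier'' (4.6)). Your first and third steps are the standard formal skeleton of that argument. But the step you yourself flag as ``the main obstacle'' --- the identification $\H^1(\P^1,\Pic_{X/\P^1})\simto\Sha(k(t),\Pic_{X_{k(t)}/k(t)})$ --- is not a verification to be deferred: it \emph{is} the content of Grothendieck's proposition. It requires comparing $\Pic_{X/\P^1}$ with $j_\ast$ of its generic fibre, proving injectivity of restriction on $\H^1$, and controlling the discrepancy sheaves at the bad points via N\'eron component groups. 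With that step only gestured at, what you have is a correct reduction of the theorem to its hardest part, not a proof.

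Three further assertions need repair, and in each case the rescue comes from facts special to this situation rather than from the reasons you give. First, $\R^2f_\ast\G_m=0$ does not follow from Tsen's theorem alone: the stalks are Brauer groups of relative curves over \emph{strictly henselian local rings}, and you need Artin's comparison theorem (Brauer III, Th\'eor\`eme 3.1), identifying such a Brauer group with that of the closed fibre, before Tsen applies. Second, the image of the degree map $\H^0(k(t),\Pic_{X_{k(t)}/k(t)})\to\Z$ is by definition the \emph{period} subgroup, not the index subgroup: Galois-invariant classes need not come from $k(t)$-rational divisors. Here they do, because the obstruction to representing an invariant class by a rational divisor lies in $\Br(k(t))$, which vanishes by Tsen; that is precisely why period equals index for the generic fibre, and it must be argued rather than assumed in a parenthesis. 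Third, the claims that the kernel $\Z/i\Z$ lies in $\Sha(k(t),\Jac(X_{k(t)}))$ and that the map onto the Tate--Shafarevich group of the Picard sheaf remains surjective both reduce to the class of the torsor $X_{k(t)}$ dying in $\H^1(K_v,\Jac(X_{k(t)}))$ at \emph{every} place $v$; this holds because an elliptic K3 surface has no multiple fibres (a consequence of $\omega_X\cong\ms O_X$ and the canonical bundle formula), not by a generic ``singular-fibre analysis'' --- for a genus-one fibration with multiple fibres, such as on an Enriques surface, the statement fails as written.
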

In particular, if $X\to\P^1$ has a section, the latter arrow is an isomorphism.
\begin{proof}
This is Proposition 4.5 (and ``cas particulier (4.6)'') of \cite{MR0244271}.
\end{proof}

It follows by descent theory that any element of $\Sha(k(t),\Jac(X_{k(t)}))$
corresponds to an \'etale form $X'$ of $X$, and $X'$ is also a K3
surface. In this section we will describe this isomorphism
geometrically using the theory of stable twisted sheaves. This
geometric description will allow us to take a varying Brauer class on
$X$ and produce a family of K3 surfaces (that are each forms of a
given elliptic fibration on $X$) in Section \ref{sec:ratcurves}. The
central interest arises from the following corollary.

\begin{cor}\label{sec:modul-interpr-isom-1}
  Let $K/k$ be an algebraically closed extension field, and suppose
  $\alpha\in\Br(X_K)$ is not in the image of the restriction map
  $$\Br(X)\to\Br(X_K).$$
  No \'etale form $X'$ of $X_K$ mapping to $\alpha$ via Theorem
  \ref{sec:modul-interpr-isom} is defined over $k$.
\end{cor}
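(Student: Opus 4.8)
The plan is to argue by contraposition: supposing that some étale form $X'$ of $X_K$ mapping to $\alpha$ is defined over $k$, I will produce a class in $\Br(X)$ restricting to $\alpha$, contradicting the hypothesis. Write $X'\cong X'_0\otimes_k K$ with $X'_0$ a K3 surface over $k$. The first step is to descend the elliptic structure. The class of a fiber of $X'\to\P^1_K$ lies in $\NS(X')$; since $X'$ is supersingular (being fiberwise isomorphic to $X_K$) its Néron--Severi group is unchanged under the algebraically closed extension $K/k$, so this class is defined over $k$ and its linear system endows $X'_0$ with an elliptic fibration $X'_0\to\P^1_k$ whose base change is $X'\to\P^1_K$.

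The second step identifies $X'_0$ as an étale form of $X$ over $k$. Being fiberwise isomorphic to $X_K$, the surface $X'$ has the same $j$-invariant function as $X_K$ on $\P^1_K$, and since these functions are base-changed from $\P^1_k$ we obtain $j_{X'_0}=j_X$ in $k(t)$. Hence $\Jac(X'_0)$ and $\Jac(X)$ are twists of one another over $k(t)$, classified by $\H^1(k(t),\m_n)=k(t)^\ast/(k(t)^\ast)^n$ for a suitable $n$. As they are isomorphic over $K(t)$ and the natural map $k(t)^\ast/(k(t)^\ast)^n\to K(t)^\ast/(K(t)^\ast)^n$ is injective --- a function in $k(t)$ is an $n$th power in $K(t)$ only if it already is one in $k(t)$, because $k$ is algebraically closed so the function and any putative root factor into linear terms over $k$ --- the two Jacobians are isomorphic over $k(t)$. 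Therefore the generic fiber of $X'_0$ is a torsor under $\Jac(X_{k(t)})$, yielding a class $\xi_0\in\H^1(k(t),\Jac(X_{k(t)}))$ with $\xi_0\otimes_k K$ equal to the class $\xi$ of $X'$.

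The crux is to check that $\xi_0$ lies in $\Sha(k(t),\Jac(X_{k(t)}))$, i.e.\ is locally trivial at every closed point of $\P^1_k$. At a point of good reduction this is automatic: the torsor extends over the Henselian local ring to a torsor under an abelian scheme, and torsors under a smooth group scheme over the algebraically closed residue field $k$ are trivial. Thus local triviality is a condition only at the finitely many special fibers, and this is precisely where Corollary \ref{cor:special-set} is needed. That corollary guarantees that the special locus --- the singular fibers, augmented in the extremal case (where Proposition \ref{sec:sing-fibers-ellipt} permits only two singular fibers) by the fibers whose $j$-invariant is $k$-rational --- consists entirely of $k$-rational points of $\P^1$. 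Since such points stay $k$-rational in $\P^1(K)$ and $\xi=\xi_0\otimes_k K$ is locally trivial there, the local triviality of $\xi_0$ descends from $K$ to $k$. I expect this descent of local triviality to be the main obstacle: at a non-$k$-rational special point there would be no place over $k$ at which to test triviality, and in the extremal case the two singular fibers alone do not suffice to pin down the torsor, so the augmentation by $k$-rational $j$-invariants in Corollary \ref{cor:special-set} is essential.

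With $\xi_0\in\Sha(k(t),\Jac(X_{k(t)}))$ established, let $\alpha_0\in\Br(X)$ be its image under the surjection of Theorem \ref{sec:modul-interpr-isom}. By naturality of the Leray edge map for $\G_m$ with respect to the base change $k\to K$, the square relating the two Artin--Tate surjections commutes, so $\alpha_0|_{X_K}=\alpha$. This exhibits $\alpha$ in the image of $\Br(X)\to\Br(X_K)$, contradicting the hypothesis and completing the argument.
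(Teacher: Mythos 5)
There is a genuine gap, and it sits at your very first step, which is exactly the point the paper's proof is designed to handle. You write that the descended fiber class ``endows $X'_0$ with an elliptic fibration $X'_0\to\P^1_k$ whose base change is $X'\to\P^1_K$.'' But a linear system determines a morphism to $\P^1$ only after a choice of basis of global sections, so descending the class of a fiber only yields a fibration $f'_0:X'_0\to\P^1_k$ whose base change agrees with the given $f':X'\to\P^1_K$ up to an unknown change of coordinates $g\in\PGL_2(K)$ on the target. Everything downstream in your argument silently assumes $g$ is the identity (or at least $k$-rational): if $g\notin\PGL_2(k)$, then what you actually know is that $j_{X'_0}$, viewed in $K(t)$, equals $j_X\circ g^{-1}$, not that $j_{X'_0}=j_X$ in $k(t)$, and the descended class satisfies $\xi_0\tensor K=g^\ast\xi$ rather than $\xi_0\tensor K=\xi$; the concluding diagram chase then only shows that a $\PGL_2(K)$-translate of the data defining $\alpha$ comes from $k$, which is no contradiction. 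Proving that $g$ can be taken in $\PGL_2(k)$ is precisely where the paper uses Corollary \ref{cor:special-set}: either $f'$ has at least three singular fibers, which (being the singular fibers of the form of $X_K\to\P^1_K$) lie over $k$-points, or, in the extremal case, the fibers with any fixed $k$-rational $j$-invariant lie over $k$-points; in either case $g$ carries at least three $k$-points into $\P^1(k)$, and a M\"obius transformation doing that is $k$-rational (three points determine it). This rigidity step is the heart of the paper's proof and is absent from yours.

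Relatedly, you have misidentified the crux. The Sha condition is not where Corollary \ref{cor:special-set} is needed, and your treatment of it is itself incomplete: descending local triviality from $K(t)_w$ to $k(t)_v$ at the bad places requires injectivity of $\H^1(k(t)_v,\Jac(X_{k(t)_v}))\to\H^1(K(t)_w,\Jac(X_{K(t)_w}))$, which you assert but do not prove, and which is not a formal statement. In fact local triviality is essentially automatic here: $X'_0$ is a K3 surface, so $f'_0$ is relatively minimal with no multiple fibers, and a non-trivial local invariant would force a multiple fiber (alternatively, fiber multiplicities are insensitive to the algebraically closed extension $K/k$). Your twist-classification step is also dispensable and delicate --- the identification $\H^1(k(t),\m_n)=k(t)^\ast/(k(t)^\ast)^n$ does not classify twists at $j=0,1728$ or in characteristics $2$ and $3$, where the automorphism group scheme of the fiber is not $\m_n$ --- whereas the paper sidesteps all of this by showing directly that $X'_0\to\P^1$ is a form of $X\to\P^1$, via descent of the epimorphism property of $\Isom_{\P^1}(X,X'_0)\to\P^1_k$ along $k\inj K$, which yields the torsor structure under $\Jac(X_{k(t)})$ at once.
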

\begin{proof}
  By functoriality of the Leray spectral sequence, the diagram
  \begin{equation}\label{Eq:jac-sha}
  \xymatrix{\Sha(K(t),\Jac(X_{K(t)}))\ar[r] & \Br(X_K)\\
    \Sha(k(t),\Jac(X_{k(t)}))\ar[u]\ar[r] & \Br(X)\ar[u]}
  \end{equation}
  commutes. 
  
  Since $X'\to\P^1_K$ is a form of $X_K\to\P^1_K$, we know by Corollary \ref{cor:special-set} that 
  either
  \begin{enumerate}
	\item[(1)]  $X'\to\P^1_K$ has at least three singular fibers located over the image of the map 
	  $$\P^1(k)\to\P^1(K),$$ or
	  \item[(2)] for any $\alpha\in k$, the set of points $t\in\P^1(K)$ such that $j(X'_t)=\alpha$ lies in the image of $$\P^1(k)\inj\P^1(K).$$
	  \end{enumerate}
  If $X'$ is defined over $k$ then so is any divisor class, and thus
  the elliptic fibration
  $$X'\to \P^1_K$$
  is a $K$-linear change of basis from an elliptic fibration
  $$X'_0\to\P^1$$
  over $k$. On the other hand, by condition (1) or condition (2) above, the change of basis must send at least three elements of $\P^1(k)\subset\P^1(K)$ into $\P^1(k)$. We conclude that the change of basis transforming $X'_0$ into $X'$ is $k$-linear (as any change of basis is determined by its action on three points, and any injective map on three $k$-points determines a $k$-linear change of basis). In other words, $X'\to\P^1_K$ is isomorphic as an elliptic fibration to the base change of an elliptic fibration over $k$, which we may assume without loss of generality is $X'_0\to\P^1$.
  
    Moreover, if $X'\to\P^1_K$ is a form of $X_K\to\P^1_K$ then
  $X'_0\to\P^1$ must be a form of $X\to \P^1$, since the epimorphism property of the big \'etale $k$-sheaves $$\Isom_{\P^1}(X,X'_0)\to\P^1_k$$ can be detected after the base change via $k\inj K$. It then follows from 
  diagram \eqref{Eq:jac-sha} that $\alpha$ must be in the image of the restriction map,
  which is a contradiction.
\end{proof}

\begin{remark}
The moral of the preceding results: a moving Brauer class gives rise to a moving family of
K3 surfaces (and not merely a moving family of elliptic pencils), at least rationally, i.e., when the base of the family is the spectrum of a field. (Note that when $X$ does not have a section, there is some ambiguity about this family of torsors; if we only work with torsors that are deformations of the trivial torsor this goes away.) We will now show that this is true in a strong sense by showing that the Artin-Tate isomorphism can be made regular over a $k$-scheme, using the moduli of twisted sheaves.
\end{remark}

Let $\Chow(X)$ denote the (graded) Chow group of algebraic cycles on
$X$ up to numerical equivalence. The fundamental class maps define
embeddings of $\Chow(X)$ into $H(X)$, where $H(X)$ is any of the
``usual'' integral cohomology theories (crystalline, $\ell$-adic).

\begin{defn}\label{D:tw ch}
  Given a perfect complex $F$ of $\ms X$-twisted sheaves, the
  \emph{twisted Chern character\/} of $F$ is
  $$\chern(F):=\sqrt[n]{\R\gamma_\ast(F^{\ltensor n})}\in\Chow(X)\tensor\Q.$$
  The \emph{twisted Mukai vector\/} of $F$ is
  $$v(F):=\chern(F)\sqrt{\Td_X}.$$
\end{defn}

It is well known that given a pair of perfect complexes $F$ and $G$ of
$\ms X$-twisted sheaves, the Riemann-Roch theorem holds:
$$\chi(F,G)=\deg(\chern(F^\vee\ltensor G)\cdot\Td_X).$$ Moreover, the
twisted Mukai vector is locally constant in a family of perfect
complexes on $\ms X$ (cf.\ the following discussion and Proposition 2.2.7.22 of \cite{MR2309155}).

Recall the following definition, Definition 2.2.7.6 of \cite{MR2309155}.

\begin{defn}
  Given a perfect complex $F$ of $\ms X$-twisted sheaves, the
  \emph{geometric Hilbert polynomial\/} of $F$ is the function
  $P_F(m)=\deg(\chern(F(m))\cdot\Td_X)$.
\end{defn}
As explained in Section 2.2.7.5 of \cite{MR2309155}, $P_F$ is a numerical polynomial with the usual
properties. In particular, we can use it to define stability and
semistable of sheaves. Write $p_F$ for the \emph{reduced Hilbert
  polynomial\/} given by
$$p_F(m)=\frac{1}{\alpha_d}P_F(m),$$
where $\alpha_d$ is the leading coefficient of $P_F$. (See Definition 2.3.2.3 of \cite{MR2309155}.)

\begin{defn}
  A pure $\ms X$-twisted sheaf $F$ is \emph{stable\/} if for every
  subsheaf $G\subset F$ we have that
  $$p_G(m)<p_F(m)$$
  for all $m$ sufficiently large.
\end{defn}

\begin{example}
  If $F$ is an invertible $\ms X$ sheaf supported on a smooth curve in
  $X$ then $F$ is stable with respect to any polarization.
\end{example}

Just as in the classical case of (untwisted) elliptic surfaces, we
will produce a form of $X$ by taking a moduli space. Recall in what follows that $\ms X$ is a $\m_n$-gerbe 
(to remind us of what $n$ means!).

\begin{defn}
  Let $\ms M_{\ms X}$ be the $k$-stack whose objects over $T$ are
  $T$-flat quasi-coherent $\ms X_T$-twisted sheaves $F$ of finite
  presentation such that for each geometric point $t\in T$, the fiber
  $F_t$ has twisted Mukai vector $(0,\ms O(E),na(\ms X)-1)$ and is $H$-slope-stable.
\end{defn}

In particular, each fiber sheaf $F_t$ above is required to be pure (part of slope-stability), necessarily of dimension $1$.

\begin{defn}\label{sec:modul-interpr-isom-10}
We define two relative stacks.
\begin{enumerate}
\item Let $\ms R^{\bbig}_{\ms X}\to\P^1$ be the stack whose objects over
  $T\to\P^1$ are $T$-flat quasi-coherent $\ms X\times_{\P^1}T$-twisted
  sheaves $F$ of finite presentation such that for each geometric
  point $t\in T$, the pushforward of the fiber $F_t$ along the natural
  closed immersion
  $$\ms X\times_{\P^1}t\to\ms X\times t$$
  is $H$-slope stable with twisted Chern class $(0,\ms O(E),na(\ms X)-1)$.
  
 \item Let $\ms R_{\ms X}$ for the reduced closed substack given by the closure of the preimage of the generic point of $\P^1$.
 \end{enumerate}
\end{defn}

By the usual results on stability (summarized in Section 3.2.1 of \cite{MR2388554}), $\ms M_{\ms X}$ is a
$\G_m$-gerbe over an algebraic space $M_{\ms X}$ and $\ms R_{\ms X}$
is a $\G_m$-gerbe over an algebraic space $R_{\ms X}\to\P^1$.

\begin{remark}
 The reader will note that we define the stability condition in terms of the pushforward of the family to $\ms X$, rather than in the usual classical way, in terms of a relative polarization on $X$ over $\P^1$. This is done in order to avoid dealing with Hilbert polynomials on gerbes -- which are not purely cohomological in nature -- in the case of a singular variety (such as a singular fiber of the pencil). 
\end{remark}

\begin{example}\label{ex:trivial-example}
  When $\ms X\to X$ is the trivial gerbe $X\times\B\m_n$, we can
  compare this to a classical moduli problem. There is an invertible
  $\ms X$-twisted sheaf (corresponding to the natural inclusion
  character $\m_n\inj\G_m$) $\ms L$ such that $\ms L^{\tensor
    n}\cong\ms O_{\ms X}$. Tensoring with $\ms L^\vee$ and pushing
  forward to $X$ defines an isomorphism between $\ms M_{\ms X}$ and
  the stack of coherent pure $1$-dimensional sheaves on $X$ with
  determinant $\ms O(E)$ and second Chern class $-1$. As shown in
  Section 4 of \cite{MR1629929}, this stack is isomorphic to $\ms R^{\bbig}_{\ms X}$, which is isomorphic to the relative moduli
  stack of stable sheaves on the fibers of $f:X\to\P^1$ of rank $1$ and
  degree $1$, and moreover $\ms R_{\ms X}$ is isomorphic to $X$ over $\P^1$. 
  (Note that {\it loc.\ cit.} works over $\C$ and only considers certain components of the moduli space. However, the arguments there do not depend on the base field. If one is willing to believe that the moduli space fibers over $\P^1$ by an elliptic fibration with smooth total space -- following Lemma \ref{sec:modul-interpr-isom-2} below -- an alternative argument to see minimality of the fibration is provided by appealing to Mukai's results on the symplectic structure on the moduli space of sheaves on a K3 surface, Theorem 0.1 of \cite{MR751133}. This then implies that the moduli space is isomorphic to $X$, as desired.) 
\end{example}

\begin{lem}
  \label{sec:modul-interpr-isom-2}
  The stack $\ms M_{\ms X}$ is a $\G_m$-gerbe over a smooth and
  separated scheme $M_{\ms X}$ of dimension $2$.
\end{lem}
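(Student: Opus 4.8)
The plan is to show that $\ms M_{\ms X}$ is a $\G_m$-gerbe over a smooth separated algebraic space $M_{\ms X}$ of dimension $2$, and then upgrade ``algebraic space'' to ``scheme''. The first assertion (the gerbe structure over an algebraic space, locally of finite type) is already supplied by the cited results on stability (Section 3.2.1 of \cite{MR2388554}), so the content of the lemma is really \emph{smoothness}, \emph{separatedness}, \emph{dimension $2$}, and \emph{being a scheme}. I would treat these in that order. Throughout, the key technical input is twisted Mukai-vector deformation theory on the K3 gerbe $\ms X$, together with Riemann--Roch for twisted sheaves as recorded just above this lemma.

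\smallskip

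\emph{Smoothness and dimension.} I would compute the tangent and obstruction spaces of $M_{\ms X}$ at a point corresponding to a stable twisted sheaf $F$ with twisted Mukai vector $v=(0,\ms O(E),na(\ms X)-1)$. The Zariski tangent space is $\Ext^1_{\ms X}(F,F)$ and the obstruction lies in $\Ext^2_{\ms X}(F,F)$. Because $F$ is stable, $\Hom_{\ms X}(F,F)=k$, and because $\ms X$ is a gerbe over a K3 surface, Serre duality on the twisted derived category gives $\Ext^2_{\ms X}(F,F)\cong\Hom_{\ms X}(F,F)^\vee=k$ (the canonical bundle of the underlying K3 is trivial, and the gerbe structure does not affect the dualizing complex). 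The trace map splits off these one-dimensional pieces, so that the obstructions land in the trace-free part $\Ext^2_{\ms X}(F,F)_0$, which vanishes; hence $M_{\ms X}$ is smooth. Its dimension is then
$$
\dim_k\Ext^1_{\ms X}(F,F)=2-\chi_{\ms X}(F,F)=2+\langle v,v\rangle,
$$
and a direct computation of the Mukai pairing of $v=(0,\ms O(E),na(\ms X)-1)$ gives $\langle v,v\rangle=(\ms O(E))^2=E^2=0$, since $E$ is a fiber class of the elliptic fibration. This yields $\dim M_{\ms X}=2$. The twisted Riemann--Roch formula stated above the lemma is exactly what licenses the identity $\chi_{\ms X}(F,F)=-\langle v,v\rangle$.

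\smallskip

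\emph{Separatedness and schematicity.} Separatedness (equivalently, the valuative criterion for $M_{\ms X}$) follows from the uniqueness of limits of stable sheaves: over a discrete valuation ring, a stable twisted sheaf with fixed Mukai vector has at most one flat limit with the same Mukai vector and the same stability, by the standard argument that two such limits are related by an isomorphism on the generic fiber that extends by reflexivity/purity. This is again part of the general stability package of \cite{MR2388554}, so I would cite it rather than reprove it. For schematicity, the cleanest route is to invoke the example computation preceding the lemma: over the trivial gerbe the moduli space is identified with a classical moduli space of rank-$1$, degree-$1$ sheaves on the fibers, which is projective (in particular a scheme); more robustly, a smooth separated two-dimensional algebraic space over an algebraically closed field is automatically a scheme (a smooth separated surface algebraic space is a scheme), so smoothness plus separatedness plus $\dim=2$ already forces $M_{\ms X}$ to be a scheme.

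\smallskip

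\emph{Main obstacle.} The delicate point is the deformation-theoretic computation on the gerbe: one must be certain that Serre duality and the trace-splitting of the Kuranishi obstruction go through verbatim for \emph{twisted} (i.e.\ $\ms X$-twisted) sheaves, and that the relevant $\Ext$ groups are computed in the twisted derived category with the correct dualizing object. Equally, the fibers $F_t$ are supported on fibers of $f$, some of which are \emph{singular}, so $F$ need not be locally free and one must use the full $\Ext$-sheaf machinery rather than naive bundle-valued cohomology; purity of $F$ (guaranteed by stability) is what keeps the argument on track. I would therefore spend the bulk of the proof carefully setting up twisted Serre duality and the Mukai pairing, after which smoothness, the dimension count, separatedness, and schematicity all follow formally.
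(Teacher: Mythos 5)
Your proposal follows essentially the same route as the paper's proof: the $\G_m$-gerbe structure over an algebraic space is quoted from the stability machinery, smoothness is reduced to the vanishing of the trace-free part of $\Ext^2(F,F)$, the dimension count comes from $\chi(F,F)=0$ via Riemann--Roch (the paper does not write out $\langle v,v\rangle=E^2=0$, but that is the underlying computation), and separatedness is the uniqueness of stable limits (the paper cites Langton's theorem for twisted sheaves, Lemma 2.3.3.2 of \cite{MR2309155}). Your explicit appeal to the fact that a smooth separated two-dimensional algebraic space over an algebraically closed field is a scheme is in fact more careful than the paper, which passes from ``algebraic space'' to ``scheme'' silently; note, however, that your fallback suggestion (reduce to the trivial-gerbe example) would not work for general $\ms X$.

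One step, as literally written, would fail, although it is immediately repairable with tools you already put on the table. The sheaves parametrized here are pure of dimension $1$, hence of rank $0$ on the surface, so ``the trace map splits off these one-dimensional pieces'' is false: the composite $\H^2(X,\ms O)\to\Ext^2(F,F)\xrightarrow{\Tr}\H^2(X,\ms O)$ is multiplication by $\rk F=0$, so there is no trace splitting (the same failure occurs whenever $p\mid\rk F$ in characteristic $p$). Without the splitting, knowing $\dim\Ext^2(F,F)=1$ does not by itself give $\ker(\Tr)=0$; a priori the trace map could be zero, in which case the trace-free part would be all of $\Ext^2(F,F)$. The correct argument --- and the one the paper actually uses --- is to apply Serre duality to the trace map itself: $\Tr:\Ext^2(F,F)\to\H^2(X,\ms O)$ is dual to the unit map $\H^0(X,\ms O)\to\Hom(F,F)$, which is an isomorphism because a stable sheaf is simple; hence $\Tr$ is an isomorphism and the obstruction space $\ker(\Tr)$, equivalently $\coker(\H^0(X,\ms O)\to\Hom(F,F))^\vee$, vanishes. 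With that one sentence substituted for the splitting claim, your proof is correct and matches the paper's.
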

\begin{proof}
  Since $\ms M_{\ms X}$ parametrizes stable sheaves, it is a
  $\G_m$-gerbe over its sheafification. Thus, the results will follow
  if we show that for any $F\in\ms M_{\ms X}(k)$ the miniversal
  deformation space is of dimension $2$. The scheme is separated
  because there is a unique stable limit by Langton's theorem 
  (for twisted sheaves, as explained in Lemma 2.3.3.2 of \cite{MR2309155}).

  Recall that there is an obstruction
  theory for $F$ with values in
  $$\ker(\Tr:\Ext^2(F,F)\to\H^2(X,\ms O))$$
  and a deformation theory with values in
  $$\ker(\Tr:\Ext^1(F,F)\to\H^1(X,\ms O)).$$
  By Serre duality, the obstruction theory is dual to the cokernel of
  the natural inclusion map
  $$\Gamma(X,\ms O)\to\Hom(F,F),$$
  which is trivial. The Riemann-Roch theorem shows that $\chi(F,F)=0$,
  and it follows from stability (and Serre duality) that
  $$\dim\Ext^1(F,F)=2.$$
  This shows that $\ms M_{\ms X}$ is smooth, as desired.
\end{proof}

\begin{lem}
  \label{sec:modul-interpr-isom-8}
  Pushforward defines an isomorphism
  $$\phi:\ms R^{\bbig}_{\ms X}\to\ms M_{\ms X}$$
  of $k$-stacks.
\end{lem}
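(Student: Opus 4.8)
The plan is to construct an explicit quasi-inverse to $\phi$ by taking scheme-theoretic supports, exploiting the fact that the effective divisors in the fiber class $[E]$ are exactly the fibers of $f$. Throughout, write $i\colon\ms X\times_{\P^1}T\inj\ms X_T$ for the closed immersion obtained by base change from the (separated) diagonal of $\P^1$, so that $\phi$ is pushforward along $i$.

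First I would verify that $\phi$ is a well-defined $1$-morphism. Pushforward along the closed immersion $i$ is exact and leaves the underlying $\ms O_T$-module unchanged, so it preserves $T$-flatness; since closed-immersion pushforward commutes with base change, for each geometric point $t\to T$ the fiber $(i_\ast F)_t$ is canonically $i_{t,\ast}(F_t)$, which is precisely the pushforward appearing in Definition \ref{sec:modul-interpr-isom-10}. Hence the fiberwise slope-stability and the class $(0,\ms O(E),na(\ms X)-1)$ transport verbatim, and (since a rank-$0$ Mukai vector equals the Chern character on a K3) $i_\ast F$ lands in $\ms M_{\ms X}$. Because $i$ is a closed immersion, $i_\ast$ is fully faithful, so $\phi$ is automatically faithful and full onto its essential image, and moreover an isomorphism $i_\ast F\simto i_\ast F'$ forces the two supports to agree; the real content is thus to identify the essential image with all of $\ms M_{\ms X}$.

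The heart of the argument is the inverse construction. Given $G\in\ms M_{\ms X}(T)$, I would push $G$ down to the underlying surface to obtain a $T$-flat pure $1$-dimensional sheaf $\widebar G$ on $X_T$ with fiberwise first Chern class $[E]$, and take its support divisor $D=V(\operatorname{Fitt}_0\widebar G)\subset X_T$. For each geometric point $t$, the divisor $D_t$ has class $[E]$; since $\ms O(E)$ defines the base-point-free pencil realizing $f$, one has $|E|\cong\P^1$ and every effective divisor in class $[E]$ is a fiber of $f$, pinning down a unique $\lambda(t)\in\P^1$. As $\widebar G$ is $T$-flat with constant Hilbert polynomial, $D$ is a $T$-flat relative effective Cartier divisor in class $\ms O(E)$, hence (by the universal property of $|E|=\P^1$ as the Hilbert scheme of divisors in class $[E]$, using $\Pic^0 X=0$) classified by a morphism $\lambda\colon T\to\P^1$ with $D=X\times_{\P^1,\lambda}T$. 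Because $\operatorname{Fitt}_0\widebar G\subseteq\operatorname{Ann}\widebar G$, the sheaf $G$ is annihilated by the ideal of $\ms X\times_{\P^1}T$ and descends canonically to $F:=i^\ast G$ there, with $i_\ast F\cong G$; its fiberwise pushforwards are the stable fibers of $G$, so $F\in\ms R^{\bbig}_{\ms X}(T)$ over $\lambda$. Setting $\psi(G)=(F,\lambda)$ gives a functor, and the identities $i_\ast i^\ast G\cong G$ and $i^\ast i_\ast F\cong F$ exhibit $\psi$ as quasi-inverse to $\phi$.

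The main obstacle is the relative support step: upgrading the fiberwise statement ``$G_t$ lives on a single fiber'' to the scheme-theoretic assertion that $G$ is genuinely pushed forward from $\ms X\times_{\P^1}T$, equivalently that the support divisor $D$ is a $T$-flat relative Cartier divisor pulled back from the universal member of $|E|$. This rests on the geometry of the pencil $|E|\cong\P^1$ and on $X$ being fiberwise isomorphic to a Jacobian fibration with a section (as recalled at the start of Section \ref{sec:singular-fibers}), so that the fiber classes behave as expected. Once $\lambda$ is produced and $D=X\times_{\P^1}T$ is identified, the descent of $G$ and the verification that $\phi$ and $\psi$ are mutually inverse are formal, relying only on the full faithfulness of closed-immersion pushforward.
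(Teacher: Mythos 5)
Your route is genuinely different from the paper's: you try to build an explicit quasi-inverse $\psi$ over an arbitrary base $T$, whereas the paper proves $\phi$ is a proper monomorphism (recovering the classifying map via the Stein factorization of the support, using cohomological flatness of $X\to\P^1$ and injectivity of $\ms O\to\send(F)$, and proving properness via Langton's theorem plus a flatness trick) and then, because $\ms M_{\ms X}$ is smooth, only needs surjectivity on $k$-points, where the fiberwise support statement suffices. Your plan could be carried out, but as written it has two genuine gaps.

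First, your inverse construction begins with an object that does not exist: the pushforward of an $\ms X_T$-twisted sheaf along $\ms X_T\to X_T$ is \emph{zero}, not a ``$T$-flat pure $1$-dimensional sheaf $\widebar G$ on $X_T$.'' Pushforward to the coarse space takes inertia invariants, and a twisted sheaf lies in the weight-one eigenspace of the $\m_n$-action, so its invariants vanish. What does exist globally is the ideal $\operatorname{Fitt}_0$ itself: since Fitting ideals are unchanged by twisting with invertible sheaves, one can define the support subscheme \'etale-locally on $X_T$ after trivializing the gerbe and glue. (Replacing $G$ by the untwisted sheaf of modules over an Azumaya algebra representing the class does not rescue the computation either: that sheaf has first Chern class $r[E]$ with $r$ the degree of the algebra, so your pencil argument would take place in $|rE|$ rather than $|E|$.)

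Second, the step you yourself flag as ``the main obstacle'' is asserted rather than proved, and your claim that everything after it is ``formal'' is not correct. That the Fitting subscheme $D\subset X_T$ of a flat family of fiberwise-pure one-dimensional sheaves is a $T$-flat relative Cartier divisor, compatibly with base change, is exactly Le Potier's support-morphism construction and needs the relative length-one locally free resolution; ``the geometry of the pencil'' and the comparison with the Jacobian fibration are not what is required. Moreover, even granting $\psi$, the identity $\psi\circ\phi=\operatorname{id}$ requires an argument over non-reduced $T$: for $F\in\ms R^{\bbig}_{\ms X}(T)$ lying over $t_1:T\to\P^1$, the Fitting divisor of $i_\ast F$ and the divisor $X\times_{\P^1,t_1}T$ have the same geometric fibers, but two flat families of pencil members over an Artinian base can have equal geometric fibers and still be distinct (e.g.\ $V(x)$ and $V(x-\eps)$ over $k[\eps]$). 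One must use flatness of $F$ -- for instance the annihilator-flatness trick the paper uses in its properness step -- to force the two classifying maps to coincide. The same issue affects your monomorphism claim, since distinguishing $t_1$ from $t_2$ over a fat point is precisely this statement; full faithfulness of $i_\ast$ for a \emph{fixed} closed immersion does not address it.
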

\begin{proof}

  First we define the morphism.
  Fix a $k$-scheme $T$. A point of $\ms R_{\ms X}$ is given by a lift
  $T\to\P^1$ and a $T$-point as in Definition
  \ref{sec:modul-interpr-isom-10}. But the stability condition is
  preserved under the pushforward
  $$\ms X\times_{\P^1}T\to\ms X\times T$$
  by definition of $\ms R_{\ms X}$. Hence, pushing forward along this
  morphism gives an object of $\ms M_{\ms X}$, giving the desired morphism.

  To show that $\phi$ is an isomorphism of stacks, we will show that it is a
  proper monomorphism (hence a closed immersion) that is surjective
  on $k$-points (hence an isomorphism, as $\ms M$ is smooth). We first
  make the following claim.

  \begin{claim}
    Given a morphism $a:T\to\P^1$ and a family $F$ in $\ms R_{\ms
      X}(T)$ with pushforward $\iota_\ast F$ on $\ms X\times T$, we
    can recover the graph of $a$ as the Stein factorization of the morphism
    $$\Supp(\iota_\ast F)\to \P^1\times T,$$
    where $\Supp(\iota_\ast F)$ denotes the scheme-theoretic support
    of $\iota_\ast F$.
  \end{claim}
  \begin{proof}
    Since $X\to\P^1$ is cohomologically flat in dimension $0$, the
    claim follows if the natural map
    $$\ms O_{\ms X\times_{\P^1}T}\to\End(F)$$
    is injective (as it is automatically compatible with base change
    on $T$). By the assumption about the determinant of the fibers of
    $F$, for each geometric point $t\to T$ we know that $\ms O_{\ms
      X_t}\to\End(F_t)$ is injective (as $F_t$, supported on one fiber
    of $X\to\P^1$, must have full support for the determinant on $\ms
    X$ to be correct). The result now follows from Lemma 3.2.3 of
    \cite{MR2579390}.
  \end{proof}

  Suppose $$t_1,t_2:T\to\P^1$$
  are two morphisms and $F_i$ is an object of $\ms R_{\ms X}(t_i)$ for
  $i=1,2$. Write $$\iota_i:\ms X\times_{\P^1,t_i}T\to\ms X\times T$$
  for the two closed immersions.  If
  $\phi(F_1)\cong\phi(F_2)$ then their scheme-theoretic supports
  agree, whence their Stein factorizations agree. By the Claim, the
  two maps $t_1$ and $t_2$ must be equal. But then $F_1$ and $F_2$
  must be isomorphic because their pushforwards are isomorphic.

  Now let us show that $\phi$ is proper. Fix a complete dvr $R$ over
  $k$ with fraction field $K$, a morphism $\Spec K\to\P^1$, and an
  object $F_K\in\ms R_{\ms X}(K)$. Let $\Spec R\to\P^1$ be the unique
  extension ensured by the properness of $\P^1$ and let $$\iota:\ms
  X\tensor_{\P^1}R\to\ms X\tensor_k R$$ be the natural closed
  immersion. We wish to show that the unique stable
  limit $F$ of $\iota_\ast F_K$ has the form $\iota_\ast F$ for an
  $R$-flat family of coherent $\ms X_R$-twisted sheaves (as the
  stability condition then follows by definition).

  Let $\ms I$ be the ideal of the image of $\iota$. By assumption, the
  map of sheaves
  $$\nu:\ms O_{\ms X\tensor_k R}\to\send(F)$$
  kills $\ms I$ in the generic fiber over $R$. Since $F$ is $R$-flat,
  so is $\send(F)$ (as $R$ is a dvr). Thus, the image of $\ms I$ in
  $\send(F)$ is $R$-flat. But this image has trivial generic fiber,
  hence must be trivial. It follows that $\nu$ kills $\ms I$, whence
  $F$ has a natural structure of pushforward along $\iota$, as desired.
\end{proof}

\begin{cor}
  \label{sec:modul-interpr-isom-6}
  The morphism $\ms R_{\ms X}\to\P^1$ is a $\G_m$-gerbe over a smooth surface that is flat over $\P^1$.
\end{cor}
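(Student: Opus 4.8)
The plan is to transport all structure through the isomorphism $\phi$ of Lemma \ref{sec:modul-interpr-isom-8} and then reduce the two substantive claims -- smoothness and flatness of $R_{\ms X}$ -- to elementary facts about smooth schemes over a smooth curve. First I would use $\phi:\ms R^{\bbig}_{\ms X}\simto\ms M_{\ms X}$ to identify the sheafifications of the two stacks: by Lemma \ref{sec:modul-interpr-isom-2} the sheafification of $\ms M_{\ms X}$ is the smooth separated surface $M_{\ms X}$, so the sheafification $R^{\bbig}_{\ms X}$ of $\ms R^{\bbig}_{\ms X}$ is canonically isomorphic to $M_{\ms X}$ and in particular smooth of pure dimension $2$ (the deformation computation of Lemma \ref{sec:modul-interpr-isom-2} gives $\dim\Ext^1=2$ at every point, so $M_{\ms X}$ is smooth of dimension exactly $2$ everywhere). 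Since $\P^1$ is a scheme with trivial automorphism groups, the structure morphism $\ms R^{\bbig}_{\ms X}\to\P^1$ factors through the sheafification, giving $R^{\bbig}_{\ms X}\to\P^1$. Because the $\G_m$-gerbe map is flat and induces a homeomorphism on underlying topological spaces, formation of the generic fiber, its closure, and the reduced structure all commute with passage to the sheafification; hence $\ms R_{\ms X}$ is exactly the restriction of the gerbe $\ms M_{\ms X}$ to the reduced closed subscheme $R_{\ms X}\subset R^{\bbig}_{\ms X}$ cut out as the reduced closure of the generic fiber of $R^{\bbig}_{\ms X}\to\P^1$. This already exhibits $\ms R_{\ms X}$ as a $\G_m$-gerbe over $R_{\ms X}$, compatibly with the maps to $\P^1$.

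Next I would prove smoothness. Since $R^{\bbig}_{\ms X}\cong M_{\ms X}$ is smooth, it is the disjoint union of its connected components, each of which is irreducible and smooth of pure dimension $2$. The reduced closure of the generic fiber of $R^{\bbig}_{\ms X}\to\P^1$ is precisely the union of those connected components that dominate $\P^1$: any point carried to the generic point of $\P^1$ lies in a component whose generic point is also carried to the generic point of $\P^1$, so $R_{\ms X}$ is the union of the dominating components. Being a disjoint union of smooth surfaces, $R_{\ms X}$ is itself smooth of pure dimension $2$, i.e.\ a smooth surface; it is nonempty because an invertible $\ms X_\eta$-twisted sheaf of the prescribed Mukai vector exists on the genus $1$ generic fiber (Tsen's theorem together with Lemma 3.1.1.8 of \cite{MR2388554}), and such a sheaf is stable.

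Finally, flatness over $\P^1$ is automatic. The scheme $R_{\ms X}$ is reduced and each of its irreducible components dominates the regular one-dimensional scheme $\P^1$, so every associated point of $R_{\ms X}$ is carried to the generic point of $\P^1$. Checking flatness locally over a discrete valuation ring, it suffices to verify torsion-freeness, which holds precisely because no associated point is carried into a closed fiber. Hence $R_{\ms X}\to\P^1$ is flat, which completes the argument.

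I expect the only genuine subtlety to be the bookkeeping in the first paragraph: confirming that the ``reduced closure of the preimage of the generic point'' is compatible with sheafification and really corresponds to the restriction of the gerbe to a \emph{reduced closed subscheme} of $M_{\ms X}$, rather than to some closed substack that fails to descend. Once this identification is secured, smoothness is immediate because a closed subscheme of a smooth scheme that is a union of connected components is smooth, and flatness follows from the standard associated-points criterion over a Dedekind base.
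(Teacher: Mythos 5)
Your proposal is correct and follows essentially the same route as the paper: identify $\ms R^{\bbig}_{\ms X}$ with $\ms M_{\ms X}$ via Lemma \ref{sec:modul-interpr-isom-8} so that smoothness comes from Lemma \ref{sec:modul-interpr-isom-2}, observe that $\ms R_{\ms X}$ is then a union of connected components of a smooth $2$-dimensional stack, and deduce flatness over $\P^1$ from dominance plus reducedness (your associated-points/torsion-free-over-a-DVR criterion is exactly what the paper's terse ``flat because $\ms R_{\ms X}$ is integral'' is invoking). Your write-up simply makes explicit the bookkeeping about sheafification and the nonemptiness check that the paper leaves implicit.
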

\begin{proof}
  Indeed, the stack $\ms R^{\bbig}_{\ms X}$ is smooth, whence $\ms R_{\ms X}$ is a union of connected components in a smooth stack of dimension $2$. The morphism $\ms R_{\ms
    X}\to\P^1$ is dominant by definition and flat because $\ms R_{\ms X}$ is integral.
\end{proof}

\begin{cor}
  \label{sec:modul-interpr-isom-12}
  For any geometric point $p\to\P^1$ and any object $F$ of $\ms R_{\ms X}(p)$,
  there is a dvr $A$, a diagram
  $$\xymatrix{& p\ar[d]\ar[dl]\\
    \spec A\ar[r] & \P^1}$$
  whose horizontal arrow is dominant, and a family $\ms F\in\ms R_{\ms
    X}(A)$ such that $\ms F_p\cong F$. In particular, any sheaf on a
  singular fiber sits in a flat family with a sheaf on the generic fiber.
\end{cor}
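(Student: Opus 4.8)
The plan is to reduce everything to Corollary \ref{sec:modul-interpr-isom-6}, which tells us that $R_{\ms X}$ is a smooth integral surface, flat and dominant over $\P^1$. The object $F$ gives a morphism $p\to\ms R_{\ms X}$; composing with the gerbe projection $\ms R_{\ms X}\to R_{\ms X}$ yields a point $x\in R_{\ms X}$ lying over the image of $p$. It then suffices to find a germ of a curve (a complete dvr $A$ mapping to $R_{\ms X}$) through $x$ whose composite to $\P^1$ is dominant, and to lift it through the $\G_m$-gerbe $\ms R_{\ms X}\to R_{\ms X}$. I would treat the case in which $p$ lies over a closed point of $\P^1$---the situation of the singular fibers, and the only one needed for the ``in particular''---the generic case being similar and easier.

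To build the arc, note that smoothness gives $\widehat{\ms O}_{R_{\ms X},x}\cong k[\![u,v]\!]$. Let $t$ be a local coordinate on $\P^1$ vanishing at the image of $p$; since $R_{\ms X}\to\P^1$ is dominant, the pullback of $t$ is a nonzero element of the maximal ideal, with nonzero initial form $t_m\in k[u,v]$ of some degree $m$. As $k$ is infinite I may pick $(a,b)$ with $t_m(a,b)\neq 0$; setting $A=k[\![s]\!]$ and $u\mapsto as,\ v\mapsto bs$ defines $\spec A\to\spec\widehat{\ms O}_{R_{\ms X},x}\to R_{\ms X}$, whose closed point maps to $x$ and along which $t$ acquires order exactly $m<\infty$. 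Hence $\spec A\to\P^1$ is dominant. Smoothness enters only here: it is what guarantees a regular local ring with enough arcs, so that a general arc escapes the fiber through $x$ instead of being trapped in it.

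Finally I would lift the arc through the gerbe. The $\G_m$-gerbe $\ms R_{\ms X}\to R_{\ms X}$ restricts over $\spec A$ to a gerbe of class in $\Br(A)$; as $A$ is a complete dvr with algebraically closed residue field, $\Br(A)\simto\Br(k)=0$, so this gerbe is neutral and has a section $\ms F\in\ms R_{\ms X}(A)$. (Alternatively, one can pull back through a smooth presentation of $\ms R_{\ms X}$ and avoid Brauer groups entirely.) The restriction $\ms F_p$ is a lift of $x$ through the gerbe over the field $\kappa(p)$, and such lifts form a torsor under $\Pic(\kappa(p))=0$; the lift is therefore unique and agrees with $F$, giving $\ms F_p\cong F$. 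With the evident commuting triangle $p\to\spec A\to\P^1$ this yields the asserted diagram, and the ``in particular'' is the case where the closed point of $\spec A$ is a singular fiber while its generic point lands on the generic fiber. The one delicate point is the construction of the dominating arc in the previous paragraph; granting Corollary \ref{sec:modul-interpr-isom-6}, the rest is a formal consequence, so essentially all of the content sits in the smoothness of $R_{\ms X}$.
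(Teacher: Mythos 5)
Your argument is correct in its core and is essentially the paper's proof in more careful form: the paper's own proof reads, in its entirety, ``take a general slice through the image of $[F]$ (using flatness of $R_{\ms X}\to\P^1$ from Corollary \ref{sec:modul-interpr-isom-6}), then take a finite normal covering to split the restriction of the $\G_m$-gerbe.'' Your formal arc is a local substitute for the general slice, and your splitting of the gerbe via the strictly Henselian ring $A=k\[s\]$ replaces the paper's finite normal covering (which implicitly rests on Tsen-type vanishing over a curve). Two of your additions are genuine improvements in rigor: the initial-form computation showing the arc dominates $\P^1$, and the Hilbert 90 torsor argument identifying $\ms F_p$ with $F$, a point the paper passes over in silence. (One small precision: the class of the restricted gerbe lives a priori in $\H^2(\spec A,\G_m)$ rather than in $\Br(A)$, but for a strictly Henselian local ring all higher cohomology vanishes, so the conclusion stands.)

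There is, however, one unaddressed case: your step $\widehat{\ms O}_{R_{\ms X},x}\cong k\[u,v\]$ presumes that the image $x$ of $p$ in $R_{\ms X}$ is a closed point. The corollary quantifies over arbitrary geometric points $p\to\P^1$, so $\kappa(p)$ may have positive transcendence degree over $k$, and then $x$ can be a non-closed point even when $p$ lies over a closed point of $\P^1$ --- for instance the generic point of an irreducible component of a fiber of $R_{\ms X}\to\P^1$. Two repairs are available. First, in that case $\ms O_{R_{\ms X},x}$ is itself a dvr whose spectrum already dominates $\P^1$ (its generic point is the generic point of the integral surface $R_{\ms X}$), so no arc is needed; the gerbe-splitting then uses Tsen's theorem for the residue field (or the paper's finite covering) in place of strict Henselianness. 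Second, and cleaner: base change the entire situation to $K=\kappa(p)$ at the outset --- the formation of $\ms R_{\ms X}$ and $R_{\ms X}$ commutes with this base change, as in Corollary \ref{C:family-cor} --- after which $x$ is a $K$-rational closed point of the smooth surface $R_{\ms X_K}$ and your construction runs verbatim with $A=K\[s\]$, whose composite to $\P^1_k$ is still dominant. This second repair also disposes of the generic-point case you deferred as ``similar and easier,'' so with it your proof is complete.
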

\begin{proof}
  This follows from the flatness of $R_{\ms X}\to\P^1$: one can take a
  general slice through the image of $[F]$ and then take a finite
  normal covering to
  split the restriction of the $\G_m$-gerbe $\ms R_{\ms X}\to R_{\ms
    X}$ to the slice.
\end{proof}

\begin{prop}
  \label{sec:modul-interpr-isom-13}
  Suppose $\ms X$ is a $\m_n$-gerbe that deforms the trivial gerbe. 
  The following hold for the moduli space $R_{\ms X}$ and the $\G_m$-gerbe $\ms R_{\ms X}\to R_{\ms X}$.
  \begin{enumerate}
  \item The morphism
  $$R_{\ms X}\to\P^1$$
  is an \'etale form of the morphism
  $$X\to\P^1.$$ In particular, $R_{\ms X}$ is naturally an elliptic K3 surface.
  \item The association $\ms X\mapsto[R_{\ms X}]$ gives the image of
    the Brauer class of $\ms X$ under the Artin-Tate isomorphism.
  \item The universal sheaf defines a Fourier-Mukai equivalence
    $$\D^{-\tw}(\ms R_{\ms X})\simto\D^{\tw}(\ms X).$$
  \end{enumerate}
\end{prop}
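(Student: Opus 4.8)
The plan is to reduce everything to the behavior on fibers and to the trivial case of Example \ref{ex:trivial-example}, using the relative Jacobian $J=\Pic^0_{X/\P^1}$ as the organizing object. For part (1), I would first treat the locus of smooth fibers. Over the generic point $\eta$ of $\P^1$, the fiber $(R_{\ms X})_\eta$ is the moduli space of $H$-stable $\ms X_\eta$-twisted sheaves of rank $1$ and fixed fiber degree on the genus $1$ curve $X_\eta$; tensoring by line bundles of degree $0$ makes this a torsor under $\Jac(X_\eta)=\Pic^0(X_\eta)$, since such twisting acts simply transitively on rank $1$ twisted sheaves of fixed fiber degree. The same operation exhibits the smooth locus of $R_{\ms X}\to\P^1$ as a torsor under the smooth locus of $J\to\P^1$. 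As recalled in Section \ref{sec:singular-fibers}, $X$ and $J$ are fiberwise isomorphic and in fact \'etale-locally isomorphic over $\P^1$; since any torsor under $J$ is \'etale-locally trivial over the base, $R_{\ms X}\to\P^1$ is an \'etale form of $J\to\P^1$, hence of $X\to\P^1$.

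To upgrade this from the smooth locus to all of $\P^1$ and to conclude K3-ness, I would argue that the relative moduli construction is compatible with \'etale base change on $\P^1$, so after passing to an \'etale cover $U\to\P^1$ over which the torsor $R_{\ms X}$ acquires a section the moduli problem becomes the classical relative moduli space of rank $1$, degree $1$ sheaves on the elliptic surface $X_U\to U$. By Example \ref{ex:trivial-example} (Bridgeland's identification, which is insensitive to the base field), that classical relative moduli space is isomorphic to $X_U$ over $U$, giving $R_{\ms X}\times_{\P^1}U\cong X_U$. This exhibits $R_{\ms X}\to\P^1$ as an \'etale form over the whole base; since $X$ is a relatively minimal elliptic K3 and any \'etale form of such a fibration is again a relatively minimal elliptic K3 (forms of K3 surfaces being K3 surfaces, as noted after Theorem \ref{sec:modul-interpr-isom}), $R_{\ms X}$ is an elliptic K3 surface. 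The technical care is concentrated over the singular fibers, where Corollary \ref{sec:modul-interpr-isom-12} guarantees that every sheaf on a singular fiber lies in a flat family with a sheaf on the generic fiber, preventing spurious components and controlling the limit; the characteristic $p$ phenomena flagged in the cuspidal cubic example must be checked not to disturb flatness or smoothness of $R_{\ms X}$.

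For part (2), I would match Tate--Shafarevich classes. The generic computation of the first paragraph identifies the class of the torsor $(R_{\ms X})_\eta$ in $\H^1(k(t),\Jac(X_{k(t)}))$ with the image of the Brauer class of $\ms X$ under the connecting homomorphism in the Leray/Kummer sequence, and this connecting map is precisely the edge map realizing the Artin--Tate isomorphism of Theorem \ref{sec:modul-interpr-isom}. Since $R_{\ms X}$ is smooth and proper over $\P^1$, its torsor class is everywhere locally trivial and therefore lands in $\Sha(k(t),\Jac(X_{k(t)}))$, so the assignment $\ms X\mapsto[R_{\ms X}]$ computes the Artin--Tate map. The hypothesis that $\ms X$ deforms the trivial gerbe is what pins down the $\Z/i\Z$ ambiguity in Theorem \ref{sec:modul-interpr-isom}: it selects the family of torsors that are deformations of the trivial torsor, which is exactly the normalization fixing the fiber over $0$ in the intended application.

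Finally, for part (3), the universal twisted sheaf $\ms P$ on $\ms R_{\ms X}\times_k\ms X$ --- naturally twisted by the inverse gerbe on the $\ms R_{\ms X}$ factor and by $\ms X$ on the other --- is the kernel of a Fourier--Mukai functor $\Phi_{\ms P}\colon\D^{-\tw}(\ms R_{\ms X})\to\D^{\tw}(\ms X)$. To prove it is an equivalence I would apply the twisted analogue of Bridgeland's argument together with the Bondal--Orlov/Bridgeland--Maciocia spanning-class criterion: the skyscraper objects of $\ms R_{\ms X}$ are carried to the stable twisted sheaves being parametrized, which are simple, pairwise orthogonal, and have $2$-dimensional self-$\Ext$ matching the local structure of a point on a surface, so they form a spanning class on which $\Phi_{\ms P}$ is fully faithful; since both $\ms R_{\ms X}$ and $\ms X$ are smooth and proper twisted K3 surfaces with trivial (twisted) canonical bundle, compatibility with Serre duality promotes full faithfulness on this spanning class to an equivalence. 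The main obstacle in the whole proposition is the behavior over the singular and possibly multiple fibers in characteristic $p$: it is here that one must both rule out extra components and non-reducedness in part (1) and verify the orthogonality and $\Ext$-dimension computations uniformly, i.e. establish full faithfulness of $\Phi_{\ms P}$ away from the smooth locus.
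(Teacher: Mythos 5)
Your sketches of parts (2) and (3) are reasonable --- (2) follows essentially the same route as the paper (identify the generic fiber of $R_{\ms X}$ as a $\Jac(X_\eta)$-torsor and match its cocycle with the edge map), and (3) takes a genuinely different but standard route (twisted Bondal--Orlov/Bridgeland spanning-class criterion, which needs its char $p$, twisted form to be invoked), where the paper instead checks that the adjunction maps are quasi-isomorphisms \'etale-locally on $\P^1$ and reduces to the ideal sheaf of the diagonal of $X\times_{\P^1}X$. However, part (1), which is the heart of the proposition, has a genuine gap at exactly the step the paper calls ``surprisingly subtle.'' Your reduction is: pass to an \'etale cover $U\to\P^1$ over which the torsor $R_{\ms X}$ acquires a section, and then ``the moduli problem becomes the classical relative moduli space of rank $1$, degree $1$ sheaves on $X_U\to U$.'' This conflates two different objects: trivializing the $J$-torsor $R_{\ms X}$ (or even killing the Brauer class of $\ms X$ over $X_U$) does not untwist the moduli problem. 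To compare $\ms X_U$-twisted sheaves with untwisted ones you must choose an invertible $\ms X_U$-twisted sheaf $\Lambda$ and tensor with $\Lambda^\vee$, and whether this operation preserves the numerical invariants $(0,\ms O(E),\ast)$ \emph{and the stability condition} depends on the degrees of $\Lambda$ on the irreducible components of the reducible singular fibers; an arbitrary $\Lambda$ will shift slopes of subsheaves supported on components and change which sheaves are stable. This is precisely where the hypothesis that $\ms X$ deforms the trivial gerbe enters: the paper's Lemma \ref{sec:modul-interpr-isom-15} uses a connecting family of gerbes together with the multidegree map $\Pic_{D/k}/n\Pic_{D/k}\to\prod_i\Z/n\Z$ and connectedness of the parameter curve to produce $\Lambda$ of degree $0$ on \emph{every} component of a singular fiber, then deforms it over the Henselization of $\P^1$ at that point (the obstruction lies in $\H^2(X_u,\ms O)=0$), and finally proves (Claim \ref{sec:modul-interpr-isom-14}) that tensoring by this particular $\Lambda^\vee$ preserves geometric Hilbert polynomials, hence stability. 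Your proposal never uses the deformation hypothesis in part (1) at all, which is a structural red flag: the paper's closing remark explicitly says the conclusion is unclear without it, and your appeals to Corollary \ref{sec:modul-interpr-isom-12} and to ``checking'' the cuspidal-cubic phenomena do not substitute for this argument.

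A second, related gap: your torsor description of $R_{\ms X}$ under $J$ is only established over the locus of smooth fibers, so \'etale-local triviality of torsors under an abelian scheme only shows that $R_{\ms X}$ is an \'etale form of $X$ away from the discriminant. Being an \'etale form of $X\to\P^1$ over \emph{all} of $\P^1$ --- which is what the K3 conclusion and the later applications require --- is exactly a statement about the fibers of $R_{\ms X}$ over the singular points of the fibration, and it is again the degree-$0$ twisted invertible sheaf over the Henselization at such points that supplies it; once that is in hand, the identification with the classical moduli space of Example \ref{ex:trivial-example} goes through as you intend.
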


\begin{proof}[Proof of Proposition \ref{sec:modul-interpr-isom-13}]
  Let us first check the second statement. It suffices to verify this
  over the generic point $\eta$ of $\P^1$, so that we may assume $X$ and $R$
  are genus $1$ curves over $k(t)$. The Leray spectral sequence and
  Tsen's theorem show that the edge map gives an isomorphism
  $$\Br(X_\eta)\simto\H^1(\eta,\Jac(X_\eta)),$$
  which we can describe concretely as follows. Over $\widebar{k(t)}$
  the gerbe $\ms X_\eta\to X_\eta$ has trivial Brauer class, hence
  carries an invertible twisted sheaf $\Lambda$ such that
  $\Lambda^{\tensor n}$ has degree $na(\ms X)$, which equals $0$ by our assumption that $\ms X$ deforms the trivial gerbe. 
  Given an element
  $\sigma$ of
  the Galois group of $\widebar{k(t)}$ over $k(t)$, there is an
  invertible sheaf $L_\sigma\in\Pic(X_{\widebar{k(t)}})$ such that
  $\sigma^\ast\Lambda\tensor\Lambda^{\vee}\cong L_{\sigma}|_{\ms
    X_\eta}$. This defines a $1$-cocycle in the sheaf
  $\Pic_{X_\eta/\eta}$, and its cohomology class is the image of a
  unique class in $\H^1(\eta,\Jac(X_\eta))$, as desired.

  On the other hand, tensoring with $\Lambda^{\vee}$ gives an
  isomorphism between the stack of invertible $\ms X_\eta$-twisted
  sheaves of degree $1$ and the stack of invertible sheaves
  on $X_\eta$ of degree $1$. The latter stack is a gerbe over
  $X_\eta$, and the Galois group induces the cocycle given by the
  translation action of $\Jac(X_\eta)$ on $X_\eta$. But this gives the
  edge map in the Leray spectral sequence. 
  This proves the
  second statement.

  Now let us show that $R_{\ms X}$ is a form of $X$ over
  $\P^1$. This turns out to be surprisingly subtle, and uses our assumption that $\ms X$ deforms the trivial gerbe in an essential way. We begin with a lemma.

  \begin{lem}
    \label{sec:modul-interpr-isom-15}
    Given a fiber $D\subset X$ of $f$, there is an invertible $\ms
    X\times_X D$-twisted sheaf $\Lambda$ of rank $1$ such that for
    each smooth curve $C\to D$ the restriction of $\Lambda$ to $C$ has
    degree $0$.
  \end{lem}
  \begin{proof}
    We may replace $D$ with its induced reduced structure, so we will
    assume that $D$ is a reduced curve supported on a fiber of
    $f$. Write $\pi:D\to\spec k$ for the structure morphism.
    Let $T$ be a smooth curve with two points $0$ and $1$ and $\ms
    Y\to X_T$ a $\m_n$-gerbe such that $\ms Y_0\cong\B\m_n$ and $\ms
    Y_1\cong\ms X$ (i.e., a curve connecting $\ms X$ to the trivial gerbe). 
    Let $\ms Z\to D_T$ be the restriction to $D$. The
    gerbe $\ms Z$ gives rise to a morphism of fppf sheaves
    $$T\to \R^2\pi_\ast\m_n.$$
    The Kummer
    sequence shows that there is an isomorphism of sheaves
    $$\Pic_{D/k}/n\Pic_{D/k}\simto\R^2\pi_\ast\m_p.$$ Thus, the gerbe
    $\ms Z$ gives rise to a morphism
    $$h:T\to\Pic_{D/k}/n\Pic_{D/k}$$
    under which $0$ maps to $0$ (by assumption).

    On the other hand, there is a multidegree morphism of $k$-spaces
    $$\Pic_{D/k}\surj\prod_{i=1}^m\Z,$$
    where $m$ is the number of irreducible components of $D$. (This
    map comes from taking the degree of invertible sheaves pulled back
    to normalizations of components, and surjectivity is a basic
    consequence of the ``complete gluing'' techniques of \cite{MR1432058}.) This
    gives rise to a morphism
    $$\deg_n:\Pic_{D/k}/n\Pic_{D/k}\to\prod_{i=1}^m\Z/n\Z$$
    of sheaves.  
      Composing with $h$, it follows from the connectedness of $T$ that $1\in T(k)$ must map into
      the kernel of $\deg_n$.

      By Tsen's theorem, there is an
      invertible $\ms D$-twisted sheaf $\Lambda$, and the above
      calculation shows that $\Lambda^{\tensor n}$ is the pullback of
      an invertible sheaf $L$ on $D$ such that for each irreducible
      component $D_i\subset D$, the pullback of $L$ to the
      normalization of $D_i$ has degree divisible by $n$. Let
      $$\lambda_i\in\Pic(D_i)$$ be an invertible sheaf whose pullback
      to the normalization has degree $-1$. A simple gluing argument
      shows that there is an invertible sheaf $$\lambda\in\Pic(D)$$
      such that
      $$\lambda|_{D_i}\cong\lambda_i$$
      for each $i$. Replacing $\Lambda$ by $\Lambda\tensor\lambda$
      yields an invertible $\ms D$-twisted sheaf whose restriction to
      each $D_i$ has degree $0$, yielding the desired result (as any
      non-constant $C\to D$ factors through a $D_i$).
  \end{proof}

  To show that $R_{\ms X}$ is an \'etale form of $X$, we may
  base-change to the Henselization $U$ of $\P^1$ at a closed point. By
  Lemma \ref{sec:modul-interpr-isom-15}, there is an invertible
  twisted sheaf $\Lambda_u$ on the closed fiber $$X_u\subset X_U$$
  whose restriction to each irreducible component has degree
  $0$. Since the obstruction to deforming such a sheaf lies
  in $$\H^2(X_u,\ms O)=0,$$ we know that $\Lambda_u$ deforms to an
  invertible $\ms X_U$-twisted sheaf $\Lambda$ whose restriction to
  any smooth curve in any fiber of $X_U$ over $U$ has degree $0$.

  Tensoring with $\Lambda$ gives an isomorphism of stacks
  $$\Sh_{\ms X_U/U}(0,\ms O(E),-1)\simto\Sh_{X_U/U}(0,\ms O(E),-1).$$
  We claim that this isomorphism preserves $H$-stability. Since
  stability is determined by Hilbert polynomials, it suffices to prove
  the following.

  \begin{claim}
    \label{sec:modul-interpr-isom-14}
    For any geometric point $u\to U$ and any coherent $\ms X$-twisted
    sheaf $G$, the geometric Hilbert polynomial of $\iota_\ast G$
    equals the Hilbert polynomial of $\Lambda^{\vee}\tensor G$.  In
    particular, $G$ is stable if and only if $\Lambda^{\vee}\tensor G$
    is stable.
  \end{claim}
  \begin{proof}
    Since $G$ is filtered by subquotients supported on the reduced
    structure of a single irreducible component of $D$, it suffices to
    prove the result for such a sheaf. Let $\nu:C\to D$ be the
    normalization of an irreducible component. The sheaves
    $\nu_\ast\nu^\ast G$ and $G$ differ by a sheaf of finite
    length. Thus, it suffices to prove the result for twisted sheaves
    on $C$ and twisted sheaves of finite length. In either case, we
    are reduced to showing the following:
    given a finite morphism $q:S\to X_u$ from a smooth
    $\kappa(u)$-variety, let $\ms S\to S$ be the pullback of $\ms
    X_u\to X_u$. Then for any coherent $\ms S$-twisted sheaf $G$, the
    geometric Hilbert polynomial of $q_\ast G$ equals the Hilbert
    polynomial of $\Lambda^{\vee}\tensor q_\ast G$.

    Using the Riemann-Roch theorem for geometric Hilbert polynomials,
    the classical Riemann-Roch theorem, and the projection formula, we see
    that it is enough to prove that the geometric Hilbert polynomial
    of $G$ (with respect to the pullback of $H$ to $S$) equals the
    usual Hilbert polynomial of $\Lambda_{\ms S}^{\vee}\tensor G$,
    under the assumption that $\Lambda_{\ms S}$ has degree $0$.

    Let $L\in\Pic^0(S)$ be the sheaf whose pullback to $\ms S$
    isomorphic to $\Lambda^{\tensor n}$. Using the isomorphism between
    $K(\ms S)\tensor\Q$ and $K(X)\tensor\Q$, the geometric Hilbert
    polynomial of $G$ is identified with the usual Hilbert polynomial
    of the class
    $$(\Lambda^{\vee}\tensor G)\tensor\frac{1}{n}L^\vee.$$
    But $L\in\Pic^0(S)$, so this Hilbert polynomial is the same as the
    Hilbert polynomial of $\Lambda^{\vee}\tensor G$, as claimed.
  \end{proof}

  We conclude that $R_{\ms X}$ is an \'etale form of the moduli space
  of stable sheaves on $X\to\P^1$ of rank $1$ and degree $1$ on
  fibers (again using the assumption that $\ms X$ deforms the trivial gerbe). 
  This is isomorphic to $X$ itself (see Example \ref{ex:trivial-example}).

  Finally, we need to prove that the universal sheaf defines an
  equivalence of derived categories. It is enough to show that the
  usual adjunction maps are quasi-isomorphisms (see, e.g., Proposition 3.3 of \cite{lieblich}). This shows that it is
  enough to establish the result \'etale-locally on $\P^1$. But then
  it is enough to show the result for an \'stale form of the problem, which
  means that it is enough to show that the ideal sheaf of the diagonal of
  $X\times_{\P^1}X$ gives an equivalence of derived categories. The formula for the Fourier-Mukai transform shows that the ideal sheaf of the diagonal gives the identity map, which is an equivalence, as desired.
\end{proof}

\begin{remark}
As we saw in the proof of Lemma \ref{sec:modul-interpr-isom-15}, the assumption in Proposition \ref{sec:modul-interpr-isom-13} that $\ms X$ deforms the trivial gerbe yields a kind of homogeneity of degrees of restrictions of $\ms X$ to components of fibers. This ensures that the resulting moduli problem can be compared with the classical stable sheaf theory on the underlying family of curves (\'etale-locally on the base). It is not fantastically clear to me at the present moment what happens without this hypothesis.
\end{remark}

\begin{cor}\label{C:family-cor}
Suppose given a $\m_n$-gerbe $\ms X\to X_T$ over a connected base $T$, and a point $t\in T(k)$ such that $\ms X_t$ is trivial. 
The relative moduli stack $$\ms R^{\bbig}_{\ms X}\to T$$ contains an open substack $$\ms R_{\ms X}\to T$$ whose geometric fiber over any $t\to T$ satisfies the conclusion of Proposition \ref{sec:modul-interpr-isom-13}. In particular, a gerbe $\ms X$ on $X\times T$ gives rise to a smooth family of surfaces $R_{\ms X}$ over $T$ with a morphism $R_{\ms X}\to\P^1_T$ realizing each fiber as an \'etale form of $X\to\P^1$.
\end{cor}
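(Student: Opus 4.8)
The plan is to reduce the entire statement to the fiberwise Proposition \ref{sec:modul-interpr-isom-13}, exploiting that the hypotheses on $T$ force every geometric fiber $\ms X_t$ to deform the trivial gerbe, and that all the moduli constructions involved are of finite presentation and of formation compatible with base change on $T$. First I would verify this fiberwise hypothesis. For a geometric point $t\to T$, since $T$ is connected and of finite type over $k$ one can join $t$ to the trivializing point $t_0$ by a connected chain of curves in $T$; restricting $\ms X$ to (the normalization of) such a chain realizes $\ms X_t$ as a deformation of the trivial gerbe $\ms X_{t_0}$ as required in Lemma \ref{sec:modul-interpr-isom-15}. Equivalently, the two numerical inputs that this hypothesis supplies — the vanishing $a(\ms X_t)=0$ on the smooth fibers and the homogeneity of the multidegree of the $n$-th root data on the components of the singular fibers — are each locally constant on $T$: the first by Corollary \ref{C:coho-class-indep}, the second by the same degree-in-a-family argument used inside Lemma \ref{sec:modul-interpr-isom-15}. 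Since both are trivial at $t_0$, Proposition \ref{sec:modul-interpr-isom-13} applies to every $\ms X_t$.

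Next I would treat relative smoothness and pin down the open substack. The big relative moduli stack $\ms R^{\bbig}_{\ms X}\to T$ is locally of finite presentation, and the obstruction computation of Lemma \ref{sec:modul-interpr-isom-2} is valid relative to $T$: the obstruction to deforming a stable fiber sheaf $F$ lies in $\ker(\Tr:\Ext^2(F,F)\to\H^2(X,\ms O))$, which is Serre-dual to the cokernel of $\Gamma(\ms O)\to\Hom(F,F)$ and hence vanishes, so $\ms R^{\bbig}_{\ms X}\to T$ is smooth with two-dimensional geometric fibers. Fiberwise, Corollary \ref{sec:modul-interpr-isom-6} identifies $\ms R_{\ms X_t}$ with a union of connected components of the smooth, $\P^1$-flat stack $\ms R^{\bbig}_{\ms X_t}$; because $\ms R^{\bbig}_{\ms X}\to T$ is smooth, and therefore open, the union of these fiberwise components sweeps out an open substack of $\ms R^{\bbig}_{\ms X}$, and this is the open substack $\ms R_{\ms X}\to T$ I would take.

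The three conclusions then assemble. By Proposition \ref{sec:modul-interpr-isom-13}(1) applied to each fiber, $\ms R_{\ms X}$ is a $\G_m$-gerbe over an algebraic space $R_{\ms X}$ that is smooth over $T$ with geometric fibers the \'etale forms $R_{\ms X_t}\to\P^1$; in particular $R_{\ms X}\to T$ is the desired smooth family of K3 surfaces. The morphism to $\P^1_T$ is produced exactly as in the Claim inside Lemma \ref{sec:modul-interpr-isom-8}: the scheme-theoretic support of the universal pushed-forward sheaf defines a morphism to $\P^1_T$ whose Stein factorization is $R_{\ms X}\to\P^1_T$, and by construction this restricts over each geometric $t\to T$ to the \'etale form of $X\to\P^1$ furnished by Proposition \ref{sec:modul-interpr-isom-13}.

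The hard part will be the base-change compatibility of the component selection defining $\ms R_{\ms X}$: the literal recipe of Definition \ref{sec:modul-interpr-isom-10} — the closure of the preimage of the generic point of $\P^1$ — does not obviously commute with restriction to the fibers over $T$, so the real content is to show that the open-and-closed union of the correct fiberwise components is genuinely open in the total space and restricts to $\ms R_{\ms X_t}$ for every geometric point $t$. This is exactly where I expect to use smoothness of $\ms R^{\bbig}_{\ms X}\to T$ together with flatness over $\P^1_T$ and the anchoring provided by the trivializing fiber $\ms R_{\ms X_{t_0}}\cong X$, which singles out the correct component on the fiber over $t_0$ and propagates it by flatness across $T$.
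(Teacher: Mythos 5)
Your overall reduction to the fiberwise Proposition \ref{sec:modul-interpr-isom-13}, and your preliminary check that every geometric fiber $\ms X_t$ deforms the trivial gerbe, match the intended structure of the argument, and you have correctly located the crux: showing that the union of the fiberwise-selected components is genuinely open in $\ms R^{\bbig}_{\ms X}$ and restricts correctly over every geometric point of $T$. But your proposal never actually proves this. In your second paragraph you assert that the union of the fiberwise components ``sweeps out an open substack'' because $\ms R^{\bbig}_{\ms X}\to T$ is smooth ``and therefore open''; openness of the structure morphism says nothing of the sort, since an arbitrary selection of one connected component in each fiber need not be open, even for a smooth morphism. Your final paragraph then concedes exactly this and proposes to repair it by ``anchoring'' at $t_0$ and ``propagating by flatness across $T$.'' Flatness is not a mechanism that propagates a choice of component from one fiber to nearby fibers; this is precisely the step requiring a real argument, and none is supplied. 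So the proposal has a genuine gap at its central point.

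The paper closes this gap with one concrete tool that is absent from your plan: since $\ms R^{\bbig}_{\ms X}\to T$ is smooth, the functor of connected components of its fibers (the relative $\pi_0$) is representable by a scheme $C$ that is \emph{\'etale} over $T$. The fiberwise condition singling out $\ms R_{\ms X_t}$ is an open subscheme $C'\subset C$, and Proposition \ref{sec:modul-interpr-isom-13} says that every geometric fiber of $C'\to T$ is a singleton; an \'etale, surjective morphism with singleton geometric fibers is an isomorphism. Pulling $C'$ back along the tautological map $\ms R^{\bbig}_{\ms X}\to C$ then produces $\ms R_{\ms X}$, and the \'etaleness of $C\to T$ delivers openness, base-change compatibility, and the identification of each geometric fiber with $\ms R_{\ms X_t}$ all at once. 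Note that in this argument the anchor point $t_0$ plays no role in selecting components (it is only needed to verify the ``deforms the trivial gerbe'' hypothesis for the other fibers), since Proposition \ref{sec:modul-interpr-isom-13} applies to \emph{every} fiber; your plan leans on the anchor exactly where it cannot help. Incidentally, the Stein-factorization step in your third paragraph is unnecessary: the morphism to $\P^1_T$ is built into the moduli problem of Definition \ref{sec:modul-interpr-isom-10}.
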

\begin{proof}
By Lemma \ref{sec:modul-interpr-isom-2} and Lemma \ref{sec:modul-interpr-isom-8}, the morphism 
$\ms R^{\bbig}_{\ms X}\to T$ is smooth of relative dimension $2$. By smoothness, the functor of connected components of fibers is represented by an \'etale scheme $C$ over $T$. The condition defining $\ms R_{\ms X}$ is an open subset $C'\subset C$. By Proposition \ref{sec:modul-interpr-isom-13}, every geometric fiber of $C'$ is a singleton. It follows that $C'\to T$ is an isomorphism, as desired.
\end{proof}

\begin{cor}
  \label{sec:modul-interpr-isom-9}
  Given a field $L/k$ and a $\m_n$-gerbe $\ms X\to X_L$ deforming the
  trivial gerbe, there is a natural isomorphism of numerical Chow
  groups
  $$\Chow(X)\tensor\Q\simto\Chow(R_{\ms X})\tensor\Q.$$
  In particular, if $\Br(L)=0$ then any class in $\Pic(X_{\widebar
    L})$ is defined over $L$.
\end{cor}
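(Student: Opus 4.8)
The plan is to read the first isomorphism off the Fourier--Mukai equivalence of Proposition~\ref{sec:modul-interpr-isom-13}(3) and to obtain the descent statement from the vanishing of a Brauer obstruction. I would first base change the entire construction to $L$, so that $\ms X\to X_L$, the moduli space $R_{\ms X}$, and the universal twisted sheaf $\ms P$ inducing the equivalence $\D^{-\tw}(\ms R_{\ms X})\simto\D^{\tw}(\ms X)$ are all defined over $L$. Because $X$ is supersingular and defined over the algebraically closed field $k$, numerical Chow groups are unchanged by extension of the base field, so I may write $\Chow(X)$ and $\Chow(R_{\ms X})$ without ambiguity. I would then form the twisted Mukai vector $v(\ms P)\in\Chow(R_{\ms X}\times X)\tensor\Q$ of Definition~\ref{D:tw ch} and use it as a correspondence,
$$\Phi(\xi)=\pr_{X,\ast}\bigl(\pr_{R}^{\ast}\xi\cdot v(\ms P)\bigr),$$
which produces an $L$-rational map $\Chow(R_{\ms X})\tensor\Q\to\Chow(X)\tensor\Q$; this is the candidate isomorphism.

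To prove $\Phi$ is an isomorphism I would exploit that the inverse of a Fourier--Mukai equivalence is again such a functor, with kernel the derived dual of $\ms P$ up to shift and twist. The crucial input is the functoriality of the twisted Mukai vector under these transforms --- a twisted Grothendieck--Riemann--Roch statement --- which makes the assignment $F\mapsto v(F)$ intertwine $\Phi_{\ms P}$ with the correspondence $\Phi$, and which matches convolution of kernels with composition of correspondences on $\Chow\tensor\Q$. Granting this, the composite of $\Phi_{\ms P}$ with its inverse is the identity functor, so the induced composite correspondence is the identity on rational Chow; hence $\Phi$ is invertible. I expect this compatibility to be the main obstacle: one must pin down the twisted Riemann--Roch formalism and its multiplicativity under convolution in positive characteristic over a possibly non-closed field. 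This is, however, precisely the content of the Riemann--Roch formula recorded after Definition~\ref{D:tw ch} and of the twisted-sheaf machinery of \cite{MR2309155}.

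For the final clause I would reduce to descent of line bundles along $\widebar L/L$. Since $X$ is the base change of a supersingular K3 surface over the algebraically closed field $k$, its geometric N\'eron--Severi lattice is already defined over $k$: the discriminant of $\NS$ (equivalently the Artin invariant) is read off from crystalline cohomology and is unchanged under extension of the base field, so $\Pic(X_{\widebar L})=\Pic(X)$ and $\Gal(\widebar L/L)$ acts trivially on it. The low-degree terms of the Hochschild--Serre spectral sequence for $\G_m$ on $X_L\to\Spec L$ give an exact sequence
$$\Pic(X_L)\to\Pic(X_{\widebar L})^{\Gal(\widebar L/L)}\to\Br(L),$$
and triviality of the Galois action identifies the middle group with all of $\Pic(X_{\widebar L})$. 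Thus $\Br(L)=0$ forces the first arrow to be surjective, i.e.\ every class of $\Pic(X_{\widebar L})$ descends to $L$. The naturality of the Chow isomorphism in $L$ is what lets one phrase this uniformly for $X$ and for each of its forms $R_{\ms X}$.
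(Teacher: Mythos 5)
Your first half is essentially the paper's own argument: the paper deduces the isomorphism $\Chow(X)\tensor\Q\simto\Chow(R_{\ms X})\tensor\Q$ from the cohomological realization of the Fourier--Mukai equivalence of Proposition \ref{sec:modul-interpr-isom-13}(3), combined with the pushforward isomorphism $\Chow(\ms X)\tensor\Q\simto\Chow(X)\tensor\Q$; your correspondence $\Phi$ defined by the twisted Mukai vector $v(\ms P)$ of the universal kernel, with invertibility coming from convolution of kernels matching composition of correspondences, is the same route spelled out in more detail. No objection there.

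The gap is in the final clause. Although the statement literally says $\Pic(X_{\widebar L})$, the content --- as the paper's proof and its use in Proposition \ref{P:form-marking} make clear --- is about the form $R_{\ms X}$: one needs every class in $\Pic(R_{\ms X}\tensor\widebar L)$ to be defined over $L$. For $X_L$ itself the claim is vacuous, since $X$ is defined over the algebraically closed field $k$ and $\Pic(X)\simto\Pic(X_{\widebar L})$; and that vacuous case is all your argument establishes, because your Hochschild--Serre step takes as input that the Galois action on the geometric Picard group is trivial, which you justify precisely by $X$ being a base change from $k$. For $R_{\ms X}$ that input fails: $R_{\ms X}$ exists only over $L$, and triviality of the Galois action on $\Pic(R_{\ms X}\tensor L^{\textrm{sep}})$ is exactly what has to be proved. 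Your closing appeal to ``naturality of the Chow isomorphism in $L$'' does not supply it. The missing argument, which is the actual point of the corollary, runs as follows: the Chow isomorphism of the first part shows that $\Chow(R_{\ms X})\tensor\Q$, computed over $L$, has rank $24$, so the $L$-rational Picard group already has rank $22$; since $22$ is the maximal geometric Picard rank, the Galois invariants of $\Pic(R_{\ms X}\tensor L^{\textrm{sep}})\tensor\Q$ have full rank, forcing the Galois action on the rationalized group, and hence on the torsion-free lattice itself, to be trivial; finally, unramifiedness of the Picard functor of a K3 surface identifies $\Pic(R_{\ms X}\tensor\widebar L)$ with $\Pic(R_{\ms X}\tensor L^{\textrm{sep}})$ (note that in characteristic $p$ your group $\Gal(\widebar L/L)$ should really be $\Gal(L^{\textrm{sep}}/L)$), and then $\Br(L)=0$ lets every invariant class descend, via the Hochschild--Serre sequence you wrote. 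So your last move is the right one, but without the rank argument on $R_{\ms X}$ preceding it, the descent claim for the form --- the only case that matters downstream --- does not follow.
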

\begin{proof}
  This follows from the cohomological form of Fourier-Mukai
  equivalence combined with the isomorphism in rational Chow theory
  $$\Chow(\ms X)\tensor\Q\simto\Chow(X)\tensor\Q$$ induced by pushforward.

  To see that any class is algebraic, note that $\Pic_{R_{\ms X}/L}$
  must have all of its points defined over separable extensions (as
  the Picard functor of a K3 surface is unramified). Thus, the points
  of $\Pic(R_{\ms X})$ are the Galois invariants in $\Pic(R_{\ms
    X}\tensor L^{\textrm{sep}})$, and these points compute the Picard
  group by the assumption that $\Br(L)=0$.

  By assumption, the rank of $\Chow(R_{\ms X})$ is $24$. It follows
  that the Galois action on $$\Pic(R_{\ms X}\tensor
  L^{\textrm{sep}})\tensor\Q$$ is trivial, whence the action on the
  lattice $\Pic(R_{\ms X}\tensor L^{\textrm{sep}})$ is trivial, as desired.
\end{proof}

\begin{cor}
  \label{C:no-sections}
  If $X$ is a supersingular K3 surface of Artin invariant $10$ then no
  elliptic pencil on $X$ has a multisection of degree prime to $p$.
\end{cor}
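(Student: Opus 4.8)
The plan is to translate the statement into lattice theory and reduce it to a single signature computation. Write $N=\NS(X)$, an even lattice of signature $(1,21)$ and rank $22$, and recall that for a supersingular K3 surface of Artin invariant $\sigma_0$ one has $N^\ast/N\cong(\Z/p)^{2\sigma_0}$ (Artin, Rudakov--Shafarevich). If $F\in N$ is the class of a fiber of $f$, then a multisection is a horizontal curve $D$ of degree $D\cdot F$, so the index $i$ of the generic fiber is the positive generator of the image of the homomorphism $D\mapsto D\cdot F$, $N\to\Z$. After adding multiples of $F$ and applying Bertini, any class with $p\nmid D\cdot F$ is represented by an irreducible multisection; hence $f$ has a multisection of degree prime to $p$ if and only if $p\nmid i$, i.e.\ if and only if $F\notin pN^\ast$. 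Thus I must show that for $\sigma_0=10$ every isotropic $F\in N$ lies in $pN^\ast$.

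Next I would pass to the reduction modulo the radical. Since $N^\ast/N$ is $p$-elementary we have $pN^\ast\subseteq N$, and comparing indices gives $[N:pN^\ast]=p^{22-2\sigma_0}$. The bilinear form reduces to an $\F_p$-valued form on $N/pN$ whose radical is exactly $pN^\ast/pN$, so the quotient $V:=N/pN^\ast$ is a non-degenerate quadratic space over $\F_p$ of dimension $22-2\sigma_0$, which for $\sigma_0=10$ equals $2$. Because $F^2=0$, its image $\bar F\in V$ is isotropic. The whole statement therefore reduces to the claim that $V$ is anisotropic: then $\bar F=0$, that is $F\in pN^\ast$, for every fiber class, and $p\mid i$.

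The crux is this anisotropy, and I expect it to be the only real content; I would establish it with the Milgram--van der Blij signature formula. Over $\Z_p$ one has $N\otimes\Z_p\cong U_0\perp U_1(p)$ with $U_0,U_1$ unimodular of ranks $2$ and $20$; here $V\cong\overline{U_0}$ and the discriminant form is $q_N\cong\tfrac1p\,\overline{U_1}$. A rank-two non-degenerate $\F_p$-form is isotropic precisely when its determinant is $-\square$, and the local identity $\det U_0\cdot\det U_1\equiv-1\pmod p$ shows that $V$ is isotropic if and only if $\det U_1$ is a square mod $p$. The normalized Gauss sum of $q_N$ is $\left(\tfrac{\det U_1}{p}\right)\epsilon_p^{20}=\left(\tfrac{\det U_1}{p}\right)$, since $\epsilon_p\in\{1,i\}$ forces $\epsilon_p^{20}=1$, whereas $\operatorname{sign}(N)=-20\equiv 4\pmod 8$ forces this Gauss sum to equal $e^{2\pi i\cdot 4/8}=-1$. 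Hence $\det U_1$ is a non-square and $V$ is anisotropic, completing the argument. The same computation explains why $\sigma_0=10$ is essential: for $\sigma_0\le 9$ the space $V$ has dimension $\ge 4$ and is automatically isotropic, so the obstruction disappears.
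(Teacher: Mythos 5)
Your proposal is correct for odd $p$, and it takes a genuinely different route from the paper. The paper's proof is geometric and leans on its own machinery: it applies Proposition \ref{sec:modul-interpr-isom-13} to the universal element of $\widehat{\Br}(X)$, so that the generic fiber of the resulting family over $k[[t]]$ is a non-trivial $p$-torsion form of the pencil, and then uses the rigidity of the Picard group in Artin invariant $10$ (every multisection deforms in any deformation) to conclude that a prime-to-$p$ multisection on $X$ would deform to one on the non-trivial twist, whose degree must be divisible by the period $p$ of that torsor. You replace all of this with lattice theory: the statement becomes anisotropy of the binary quadratic space $V=N/pN^\ast$ over $\F_p$, pinned down by Milgram's formula. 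I checked the odd-$p$ computation: the radical of the mod-$p$ pairing on $N/pN$ is indeed $pN^\ast/pN$; the per-summand Gauss-sum constants (factors of $\left(\tfrac{2}{p}\right)$ and $\epsilon_p\in\{1,i\}$) are fourth roots of unity and so die when raised to the $20$th power, leaving $\left(\tfrac{\det U_1}{p}\right)$; Milgram forces this to equal $e^{2\pi i(-20)/8}=-1$; and the determinant comparison then makes $V$ anisotropic, so the image of the isotropic fiber class is zero, i.e.\ $F\in pN^\ast$. What your approach buys: a self-contained proof whose only inputs are Artin's facts about $\NS(X)$ (rank $22$, signature $(1,21)$, even, $p$-elementary discriminant of length $2\sigma_0$), no moduli theory, and a transparent explanation of why $\sigma_0=10$ is essential. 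What the paper's proof buys: uniformity in $p$ and no reliance on the lattice classification.

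The genuine gap is characteristic $2$, which the statement includes (the paper fixes an arbitrary $p>0$, and supersingular K3 surfaces of Artin invariant $10$ exist in characteristic $2$). Your crux fails verbatim there: unimodular $\Z_2$-lattices need not be diagonalizable; Legendre symbols and $\epsilon_p$ are undefined; and, most seriously, over $\F_2$ the reduction of the bilinear form of an even lattice is alternating, so every vector of $V$ is isotropic for the bilinear form and the criterion ``isotropic iff $\det=-\square$'' is vacuous. For $p=2$ one must instead work with the quadratic form $q(x)=\tfrac12 x^2 \bmod 2$ on $N/2N^\ast$ and decide whether it is $xy$ or $x^2+xy+y^2$; that requires knowing the $2$-adic genus of $N$ (Rudakov--Shafarevich's characteristic-$2$ structure theory) and a different Gauss-sum computation, none of which your argument supplies. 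So as written your proof establishes the corollary only for $p$ odd; either add the characteristic-$2$ lattice argument separately or note the restriction --- the paper's deformation-theoretic proof has no such restriction.
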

\begin{proof}
  First, suppose there is some genus $1$ pencil $\pi:X\to\P^1$ with a
  section. Since $X$ has Artin invariant $10$, any family 
  $$\mc X\to\spec k\[t\]$$
  with supersingular generic fiber and special fiber will yield a restriction
  isomorphism
  $$\Pic(\mc X)\simto\Pic(X).$$
  In particular, as in Lemma 2.3 of \cite{Lieblich:2011ab}, any multisection of $\pi$ will deform in any deformation
  of $\pi$.

  In the situation of Proposition \ref{sec:modul-interpr-isom-13}
  applied to the universal element of $\widehat{\Br}(X)$, we know that
  the geometric generic fiber pencil must be a non-trivial form of
  $\pi$ (over $k\(t\)$). It follows that the generic fiber pencil
  cannot have a multisection of degree prime to $p$. By the previous paragraph, we see that the
  special fiber thus cannot have a section, as claimed.
\end{proof}

\section{Rational curves in moduli spaces}
\label{sec:ratcurves}

Fix a supersingular
elliptic K3 surface $X$ with Artin invariant $10$ and let
$\tau:N\simto\NS(X)$ be a marking by the standard K3 lattice of Artin
invariant $10$. In addition, fix an elliptic pencil $f:X\to\P^1$ and an ample divisor $H\subset X$. We will write $\ms P$ for the period space of $N$-marked
supersingular K3s defined by Ogus in \cite{MR717616}.

The Leray spectral sequence for $\m_p$ with respect to $f$ 
and the vanishing of cohomology of $\A^1$ give a class $$\widetilde\alpha\in\H^2(X\times\A^1,\m_p)$$ that induces a closed and open immersion $$\A^1\inj\R^2\pi_\ast\m_p$$ onto the connected component of the identity. 
Let $\ms X\to X\times\A^1$ be a $\m_p$-gerbe representing $\widetilde\alpha$. 

Every fiber of $\ms X$ over $\A^1$ deforms the trivial gerbe, since $\ms X_0$ parametrizes the trivial class in $\H^2(X,\m_p)$, so we can apply Corollary \ref{C:family-cor}. In particular, we can form the relative moduli space of stable twisted sheaves and use the fiberwise calculations of Section \ref{sec:mod-interp}. Write $Y:=R_{\ms X}\to\A^1$ for the family of moduli spaces; there is a morphism $Y\to\P^1\times\A^1$ such that each fiber over $\A^1$ is an \'etale form of $X\to\P^1$. Moreover, there is a canonical isomorphism $Y_{0} = X$, giving a marking of $Y_0$.

\begin{prop}
  \label{P:form-marking}
  The marking $$\tau:N\simto\NS(Y_0)$$ extends to a marking
  $$N\simto\Pic_{Y/T},$$
  giving a family in $\ms P(T)$.
\end{prop}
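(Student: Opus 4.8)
The plan is to reduce the proposition to the single statement that the relative Néron–Severi sheaf $\NS_{Y/\A^1}$ is \emph{constant}; once this is known, extending $\tau$ over the whole base is immediate, and the resulting marked family of supersingular K3 surfaces determines a point of $\ms P(\A^1)$ through Ogus's construction of the period space in \cite{MR717616}. So the heart of the argument is a constancy (monodromy-triviality) statement for a local system of lattices over $\A^1$.

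First I would pin down the fibers and the sheaf $\NS_{Y/\A^1}$. Since every fiber of $\ms X\to X\times\A^1$ deforms the trivial gerbe, Corollary \ref{sec:modul-interpr-isom-9} applies over the function field $L=k(t)$ of $\A^1$, which is a $C_1$ field, so $\Br(k(t))=0$ by Tsen's theorem. The corollary then identifies the rational Chow groups of the geometric generic fiber $Y_{\overline\eta}$ with those of $X$, so $Y_{\overline\eta}$ is a supersingular K3 surface of Picard number $22$; moreover it asserts that every class in $\Pic(Y_{\overline\eta})$ is already defined over $k(t)$, i.e.\ $\Gal(k(t)^{\mathrm{sep}}/k(t))$ acts trivially on $\NS(Y_{\overline\eta})=\Pic(Y\otimes k(t)^{\mathrm{sep}})$. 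Because the geometric Picard number of a K3 surface can only increase under specialization, the generic value $22$ (already the maximal one) forces every fiber $Y_s$ to have Picard number $22$. Consequently $\Pic_{Y/\A^1}$ is formally unramified (as $\H^1(\ms O)=0$ on K3 fibers) with fibers of constant rank, hence étale over $\A^1$; since $\Pic^0$ vanishes for K3 surfaces we have $\Pic_{Y/\A^1}=\NS_{Y/\A^1}$, and this is therefore a local system of rank-$22$ lattices whose stalks are all isometric to $\NS(Y_0)\cong N$.

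The crux is to trivialize the monodromy $\rho\colon\pi_1^{\et}(\A^1,\overline\eta)\to\mathrm O(N)$ of this local system. The composite of $\rho$ with the canonical surjection $\Gal(k(t)^{\mathrm{sep}}/k(t))\surj\pi_1^{\et}(\A^1,\overline\eta)$ (available because $\A^1$ is normal integral with generic point $\eta$) is exactly the Galois action on the stalk $\NS(Y_{\overline\eta})$, which we just saw is trivial. Hence $\rho$ is trivial and $\NS_{Y/\A^1}$ is the constant sheaf $\underline N$; in particular every $Y_s$ has Néron–Severi lattice isometric to $N$, hence Artin invariant $10$, consistent with Corollary \ref{C:no-sections}. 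Choosing the global trivialization $\underline N\simto\NS_{Y/\A^1}$ that agrees with $\tau$ on the fiber over $0$ — the given global trivialization and $\tau$ differ there only by a constant automorphism of $N$, which may be absorbed — produces the desired extension $N\simto\Pic_{Y/\A^1}$ of $\tau$. Finally, the constant marking together with the relative crystalline cohomology $\Hcris^2(Y/\A^1)$ determines the varying characteristic subspace, giving the period map $\A^1\to\ms P$ and thus the required element of $\ms P(\A^1)$.

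The main obstacle is precisely the monodromy-triviality step, and everything there turns on the input that $k(t)$ has trivial Brauer group: via Corollary \ref{sec:modul-interpr-isom-9} this forces all geometric Néron–Severi classes of $Y_{\overline\eta}$ to descend to $k(t)$, which is what kills $\rho$. Two secondary points require care in the write-up: verifying that $\NS_{Y/\A^1}$ is genuinely lisse over \emph{all} of $\A^1$ (i.e.\ the constancy of the Picard number argued above, so that $\Pic_{Y/\A^1}$ is étale rather than merely generically so), and checking that the resulting constant marking is compatible with the crystalline period data entering the definition of $\ms P$, so that the marked family really represents a $T$-point in Ogus's sense and not merely an abstract trivialization of lattices.
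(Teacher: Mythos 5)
Your opening step (Tsen's theorem plus Corollary \ref{sec:modul-interpr-isom-9}, giving geometric Picard number $22$ and trivial Galois action over $k(t)$) matches the paper, but the pivot of your argument --- ``formally unramified with fibers of constant rank, hence \'etale,'' so that $\NS_{Y/\A^1}$ is a local system with all stalks isometric to $N$ --- has a genuine gap. Unramified plus constant fiber rank does not imply flat, hence does not imply \'etale, and for supersingular K3 surfaces this is not a technicality: under specialization the N\'eron--Severi \emph{lattice} can grow by a finite index $p^{10-\sigma}$ while the rank stays $22$; this is exactly a drop of the Artin invariant, and the loci of Artin invariant $\leq\sigma<10$ are proper closed subsets of the period space that a non-isotrivial family can certainly meet. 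At such a point the classes of the special fiber that do not lift to a henselian neighborhood give components of $\Pic_{Y/\A^1}$ that are unramified but not flat over $\A^1$, the specialization map $\NS(Y_{\widebar\eta})\hookrightarrow\NS(Y_{\widebar s})$ is not surjective, and there is no rank-$22$ local system whose monodromy you could trivialize. So constancy of $\NS_{Y/\A^1}$ --- equivalently constancy of the Artin invariant --- is precisely what has to be proved; it is not a consequence of rank-constancy.

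Relatedly, your proposal never uses the hypothesis that $Y_0$ has Artin invariant $10$, and it must: to extend the \emph{given} marking $\tau$ one needs the injective specialization map $\NS(Y_{\widebar\eta})\to\NS(Y_0)$ to be surjective, since otherwise any global (or constant) system of classes restricts at $0$ to a proper finite-index sublattice of $\NS(Y_0)$ and cannot agree with $\tau$. The paper gets this surjectivity exactly from maximality of the Artin invariant: an injective isometry of rank-$22$ supersingular K3 lattices into one of maximal discriminant $p^{20}$ must have index $1$. It then descends this statement from the formal disk at $0$ to a generically \'etale cover of $\A^1$ by Popescu's theorem, invokes the Tsen/Galois-triviality step (the one you do have) to see that the resulting classes are already defined over $k(t)$, and finally produces the global marking by taking closures of divisors, which is a group homomorphism because every fiber is smooth, hence irreducible. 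Your monodromy argument would be a clean substitute for these last steps \emph{if} lisse-ness and the surjectivity at $0$ were established first; as written, both are assumed rather than proved.
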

\begin{proof}
  By Corollary \ref{sec:modul-interpr-isom-9}, we know that the fibers of $Y$ over (not
  necessarily geometric) points of $\A^1$ have Picard group of rank $22$ (over their fields of definition)
  $22$. On the other hand, for each point $\Spec L\to \A^1$ with $L$ of transcendence degree at most $1$ over $k$, 
  we have (by Tsen's theorem) that
  $\Pic(Y_L)$ is the Galois-invariants in $\Pic(Y_{\widebar
    L})$. Since the invariants have rank $22$, it follows that the
  Galois action on $\Pic(Y_{\widebar L})\tensor\Q$ is trivial,
  whereupon $\Pic(Y_L)=\Pic(Y_{\widebar L})$ by the semisimplicity of finite groups in characteristic $0$.

  Since $Y_0$ has Artin invariant $10$, the (injective)
  specialization map
  $$\Pic(Y_{\widebar k\(t\)})\to\Pic(Y_0)$$ must be an isomorphism. By Popescu's theorem, this descends to some generically \'etale extension of $\A^1$ with non-empty fiber over $0$.  
  Combining this with the previous statement, 
  we see that the specialization map
  $$\Pic(Y_{k(t)})\to\Pic(Y_0)$$ over $\A^1$ is an isomorphism. There is a canonical
  injection $$\Pic(Y_{k(t)})\to\Pic(Y)$$ given by taking closure of divisors, noting that every fiber is smooth, so that closure is a group homomorphism. This gives
  rise to a global marking
  $$N\to\Pic(Y)$$ extending $\tau$, as desired.
\end{proof}

\begin{thm}\label{T:curves in period space}
  Given $(X,\tau)$ and $f:X\to\P^1$, $\sigma:\P^1\to X$ as above, let
  $$c:\G_a\to\ms P$$
  be the morphism induced by $\phi$ and Proposition
  \ref{P:form-marking}, so that $c(0)=(X,\tau)$. The morphism $c$ is
  non-constant. In particular, every point of Artin invariant $10$ in
  $\ms P_N$ lies in a non-trivial rational curve.
\end{thm}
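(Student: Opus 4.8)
The plan is to argue by contradiction: I would assume $c$ is constant and derive a contradiction with Corollary~\ref{sec:modul-interpr-isom-1}. The mechanism is that, by Proposition~\ref{sec:modul-interpr-isom-13}(2), the geometric generic fiber of $Y$ is the \'etale form attached under the Artin--Tate isomorphism to a Brauer class that is \emph{transcendental} over $k$, and Corollary~\ref{sec:modul-interpr-isom-1} forbids such a form from being defined over $k$; a constant $c$ would force precisely the opposite.

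First I would pin down this generic class. The class $\widetilde\alpha$ induces an isomorphism $\A^1\simto\ms C^\circ$, and since $\Pic(X)/p$ is a finite \'etale group, the kernel of the natural map from $\ms C^\circ$ to the (representable) Brauer group $\Br_X$ is a connected subgroup of an \'etale group, hence trivial; thus $\ms C^\circ\inj\Br_X$. Evaluating at the generic point $\eta$ of $\A^1$ then gives the tautological $\kappa(\eta)$-point of $\G_a\cong\ms C^\circ$, i.e.\ a class $\alpha_\eta\in\Br(X_{\overline{\kappa(\eta)}})$ whose coordinate is the transcendental $t$. As $\ms C^\circ\inj\Br_X$ is a monomorphism, the image of the restriction $\Br(X)\to\Br(X_{\overline{\kappa(\eta)}})$ meets $\ms C^\circ$ exactly in the $k$-rational points $\ms C^\circ(k)=k$, so $\alpha_\eta$ is not in this image.

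Next I would convert constancy of $c$ into a statement about elliptic fibrations. Every fiber of $\ms X$ over $\A^1$ deforms the trivial gerbe, so Proposition~\ref{sec:modul-interpr-isom-13} applies over $\overline{\kappa(\eta)}$ and identifies $Y_{\overline{\kappa(\eta)}}=R_{\ms X_{\overline{\kappa(\eta)}}}$ with the \'etale form of $X\to\P^1$ carrying $\alpha_\eta$. If $c$ were constant then $c(\overline\eta)=c(0)=(X,\tau)$, and by the injectivity half of Ogus's Torelli theorem this equality of periods yields an isomorphism of marked surfaces $(Y_{\overline{\kappa(\eta)}},\,\text{marking})\simto(X,\tau)\otimes\overline{\kappa(\eta)}$. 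Because the marking lattice $N$ contains the fiber class $\ms O(E)$, and the genus $1$ fibration is recovered as the morphism attached to that class, the isomorphism is compatible with the projections to $\P^1$; hence $Y_{\overline{\kappa(\eta)}}\to\P^1$ would be the base change of $X\to\P^1$ and in particular defined over $k$. But Corollary~\ref{sec:modul-interpr-isom-1}, applied with $K=\overline{\kappa(\eta)}$ and $\alpha=\alpha_\eta$, says that no \'etale form mapping to $\alpha_\eta$ is defined over $k$. This contradiction shows $c$ is non-constant.

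For the final assertion I would run the whole construction at an arbitrary point $(X,\tau)\in\ms P_N$ of Artin invariant $10$: the N\'eron--Severi lattice of such a surface contains isotropic classes, so $X$ carries a genus $1$ pencil $f\colon X\to\P^1$, and the construction needs only the associated Jacobian fibration (which always has a section), never a section on $X$ itself --- indeed Corollary~\ref{C:no-sections} forbids the latter. The resulting morphism $c\colon\G_a\to\ms P$ is non-constant with $c(0)=(X,\tau)$, so its image is a non-trivial rational curve through $(X,\tau)$. The main obstacle is the middle step: checking that constancy of the period morphism genuinely descends the elliptic fibration on the geometric generic fiber to $k$, which requires both the injectivity half of Torelli (to upgrade an equality of periods into an isomorphism of marked K3 surfaces) and the fact that the marking determines the genus $1$ fibration up to isomorphism over its base.
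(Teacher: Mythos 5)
Your architecture is the same as the paper's: the paper proves non-constancy by combining Corollary \ref{sec:modul-interpr-isom-1} with the assertion that the geometric Brauer class over the generic point of $(\R^2\pi_\ast\m_p)^\circ$ is not pulled back from $k$, and your second and third paragraphs supply exactly the steps the paper leaves implicit (Ogus's Torelli theorem to convert constancy of periods into an isomorphism with $X_K$, hence a form defined over $k$, then the contradiction; and the observation that no section of $f$ is needed, only the Jacobian, consistent with Corollary \ref{C:no-sections}). So the skeleton is correct and faithful to the paper.

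However, your first paragraph contains a genuine error. You assert $\ms C^\circ\inj\Br_X$ on the grounds that the kernel is ``a connected subgroup of an \'etale group, hence trivial.'' That inference is invalid in characteristic $p$: the kernel of a homomorphism out of a connected group scheme need not be connected --- the Artin--Schreier map $x\mapsto x^p-x$ exhibits $\Z/p\Z$ as a nontrivial \'etale subgroup scheme of $\G_a$ --- so knowing the kernel is \'etale does not make it vanish. Moreover, the claim itself is false here: by Kummer, the kernel of $\ms C(k)\to\Br(X)$ is the image of $\Pic(X)/p\cong(\Z/p\Z)^{22}$, while the component group of $\ms C$ is finite of $\F_p$-rank strictly less than $22$ (Artin's structure theory gives rank $22-2\sigma_0$, where $\sigma_0$ is the Artin invariant), so $\Pic(X)/p$ is forced to meet $\ms C^\circ(k)$ in a nontrivial subgroup --- of rank $20$ when $\sigma_0=10$ --- and $\ms C^\circ\to\Br_X$ is very far from a monomorphism. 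Fortunately the conclusion you need survives, by an argument that bypasses $\Br_X$ entirely (and thus also avoids invoking representability of $\R^2\pi_\ast\G_m$, which the paper never establishes): if the Brauer class of $\alpha_\eta$ were the restriction of some $\beta\in\Br(X)$, lift $\beta$ along the Kummer sequence to $\tilde\beta\in\H^2(X,\m_p)=\ms C(k)$; then $t\in\ms C^\circ(K)$ and $\tilde\beta_K$ have the same image in $\Br(X_K)$, hence differ by an element of $\Pic(X_K)/p=\Pic(X)/p$ (the Picard scheme of a K3 is the constant scheme $\NS(X)$, so no new divisor classes appear over $K$). Therefore $t$ lies in the image of $\ms C(k)\to\ms C(K)$; since the connected components of $\ms C$ are defined over $k$, this forces $t\in\ms C^\circ(k)=k$, contradicting the transcendence of $t$ over $k$. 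With this replacement for your first paragraph, the proof is correct and coincides with the paper's intended argument.
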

\begin{proof}
  It remains to prove that $c$ is non-constant. But this follows from
  Corollary \ref{sec:modul-interpr-isom-1} 
  and the fact that the geometric Brauer
  class over the generic point of $(\R^2\pi_\ast\m_p)^\circ$ is not
  the pullback of a class over $k$.
\end{proof}

As we will see in the next section, distinct elliptic structures on $X$ gives rise to physically distinct curves in $\ms P$ (i.e., distinct even after reparametrization), showing that this construction yields an infinite collection of rational curves in $\ms P$ through every general point. (It is somewhat surprising that this is true. One naively expects that the orbits of the automorphism group of $X$ acting on the elliptic pencils should parametrize these special rational curves, but that turns out to be too pessimistic.)

\section{Static pencils and deformations} 
\label{sec:static_pencils_and_deformations}
Let $X$ be a supersingular K3 surface of Artin invariant $10$. Since
the Artin invariant is maximal, for any deformation of $X$ over a
Henselian local ring, say
$$\xymatrix{X\ar[r]\ar[d] & \mc X\ar[d]\\
\Spec k\ar[r] & \Spec R,}$$
we have that the restriction map $\Pic(\mc X)\to\Pic(X)$ is an isomorphism.

\begin{lem}
  \label{L:pencil-deforms}
  Suppose $f:X\to\P^1$ is a pencil of genus $1$ curves on $X$. Given a
  deformation $\mc X/R$ as above, there is a deformation of $f$ to a
  relative pencil $F:\mc X\to\P^1_R$.
\end{lem}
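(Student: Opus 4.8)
The plan is to realize the relative pencil as the morphism attached to the complete linear system of the unique lift of the fibre class, and to control that morphism by cohomology and base change from the special fibre. First I would recall that the pencil $f$ is defined by the complete linear system $|E|$ of its fibre class $E\in\Pic(X)$, which satisfies $E^2=0$, is base-point free with $\H^0(X,\ms O_X(E))$ of dimension $2$, and has $\H^1(X,\ms O_X(E))=\H^2(X,\ms O_X(E))=0$ (the last by Serre duality together with effectivity of $E$). Because $X$ has Artin invariant $10$, the restriction map $\Pic(\mc X)\to\Pic(X)$ is an isomorphism (as recalled just before the statement), compatibly with intersection forms; let $\mc L\in\Pic(\mc X)$ be the unique class with $\mc L|_X\cong\ms O_X(E)$, so that $\mc L^2=E^2=0$. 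Since $\mc X$ and $\mc L$ are of finite presentation over $R$, a standard spreading-out argument lets me assume $R$ is Noetherian (writing a Henselization as a filtered colimit of essentially \'etale Noetherian local algebras), which licenses the cohomology-and-base-change statements below.

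Next, writing $\pi\colon\mc X\to\Spec R$ for the structure morphism, the vanishing $\H^1(X,\ms O_X(E))=0$ on the special fibre forces, by cohomology and base change, that $\pi_\ast\mc L$ is locally free of rank $2$, that $\R^1\pi_\ast\mc L=0$, and that the formation of $\pi_\ast\mc L$ commutes with arbitrary base change; in particular its fibre at the closed point recovers $\H^0(X,\ms O_X(E))$. As $R$ is local, $\pi_\ast\mc L$ is free of rank $2$.

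I would then verify relative base-point-freeness. The cokernel of the evaluation map $\pi^\ast\pi_\ast\mc L\to\mc L$ is supported on a closed subset $B\subset\mc X$ whose image in $\Spec R$ is closed, $\pi$ being proper. By base change, the restriction of the evaluation to the special fibre is the evaluation for $|E|$, whose base locus is empty since $E$ is base-point free; hence $B$ misses the closed fibre, so its image is a closed subset of the local scheme $\Spec R$ avoiding the closed point, i.e. empty, and therefore $B=\emptyset$. Thus $\mc L$ is relatively globally generated and defines a morphism $F\colon\mc X\to\P(\pi_\ast\mc L)$, where $\P(\pi_\ast\mc L)\cong\P^1_R$ because $\pi_\ast\mc L$ is free of rank $2$; over the closed point $F$ restricts to $\varphi_{|E|}=f$. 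Finally, each geometric fibre of $F$ lies in $|\mc L|$, so has arithmetic genus $1+\tfrac12(\mc L^2+\mc L\cdot K_{\mc X/R})=1$, using $\mc L^2=0$ and the triviality of $\omega_{\mc X/R}$ (which restricts to $\omega_X\cong\ms O_X$ and hence is trivial by the isomorphism $\Pic(\mc X)\simto\Pic(X)$). Thus $F$ is a genus-$1$ pencil deforming $f$.

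The main obstacle is the passage of base-point-freeness from the special fibre to the whole family together with the identification of the target as genuinely $\P^1_R$: both rest on the vanishing $\H^1(X,\ms O_X(E))=0$, which propagates the rank-$2$ freeness of $\pi_\ast\mc L$, and on the properness argument that sweeps the special-fibre base-point-freeness over the local base. The reduction to the Noetherian case is routine but must be carried out to justify cohomology and base change, and one must keep track that $\Pic(\mc X)\simto\Pic(X)$ respects the intersection form so that $\mc L^2=0$ — it is precisely this that forces the deformed fibres to have genus $1$ rather than higher genus.
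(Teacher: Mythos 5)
Your proof is correct and follows essentially the same route as the paper's: lift the fiber class $\ms O(E)$ through the restriction isomorphism $\Pic(\mc X)\simto\Pic(X)$ supplied by the Artin invariant $10$ hypothesis, then use the vanishing of $\H^1(X,\ms O(E))$ to lift the sections and hence the pencil. The paper compresses the section-lifting, relative base-point-freeness, and identification of the target with $\P^1_R$ into the phrase ``basic deformation theory''; your cohomology-and-base-change argument is simply the careful expansion of that step.
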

\begin{proof}
  Let $E$ be a smooth fiber of $f$. First, since $X$ has Artin
  invariant $10$, the class of $E$ lifts over $R$ to some invertible
  sheaf $\ms L\in\Pic(\mc X)$. It follows from basic deformation
  theory and the vanishing of $\H^1(X,\ms O(E))$ that in fact the
  global sections of $\ms L|_X$ lift to sections of $\ms L$ over $\mc
  X$. This lifts the pencil.
\end{proof}

\begin{defn}
  Suppose $\mc X\to R$ is a deformation of $X$ over a Henselian local
  $k$-algebra $k\to R$.  A pencil $f:X\to\P^1$ is \emph{static with
    respect to the deformation $\mc X_{/R}$} if there is a lift of the
  pencil $F:\mc X\to \P^1_R$ such that the pencils $f\tensor R$ and
  $F$ are isomorphic \'etale-locally on $\P^1_R$.
\end{defn}

\begin{defn}
  Two pencils $f:X\to\P^1$ and $g:X\to\P^1$ are \emph{distinct} if
  there is no commutative diagram
  $$\xymatrix{& X\ar[dr]^f\ar[dl]_g &\\
    \P^1\ar[rr] & & \P^1}$$ of isomorphisms. Equivalently, the fibers
  of $f$ and $g$ are not linearly equivalent.
\end{defn}

\begin{defn}
  Two pencils $f:X\to\P^1$ and $g:X\to\P^1$ are \emph{transverse} if
  there is a smooth fiber $E:=f^{-1}(x)$ and a smooth fiber
  $D:=g^{-1}(x')$ such that $E\cap D$ is reduced and $\ms
  O(E)\not\cong\ms O(D)$. In
  other words, general fibers of $f$ and $g$ are not linearly
  equivalent and they intersect
  transversely. Equivalently, the restriction of the morphism $f$ to a
  general fiber of $g$ is generically \'etale (or vice versa).
\end{defn}

\begin{prop}
  \label{P:static-implies-constant}
  Suppose $R$ is a Henselian local augmented
  $k$-algebra. If $\mc X\to\spec R$ is a deformation
  of $X$ over which two transverse pencils remain static then $\mc X$ is
  isomorphic to the constant deformation $X_R$.
\end{prop}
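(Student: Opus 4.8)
The plan is to package the two deformed pencils into a single finite cover of $\P^1\times\P^1$ and to show that, once we are allowed to work infinitesimally, the two staticity hypotheses force this cover — hence $\mc X$ itself — to be constant, transversality being exactly what guarantees that the two pencils together ``see'' all of $\mc X$. I would begin by reducing to the case that $R$ is Artinian local. Since $X$ has no infinitesimal automorphisms ($\H^0(X,T_X)=0$) and $X$ is a supersingular K3 of Artin invariant $10$, the deformation $\mc X/R$ is classified (after choosing a marking) by a morphism $\Spec R\to\ms P$ sending the closed point to $(X,\tau)$, and Ogus's period morphism is a local isomorphism onto $\ms P$ \cite{MR717616}. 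As $\ms P$ is a separated scheme of finite type over $k$, it suffices to prove constancy on every Artinian quotient $R/\mathfrak m^{\,n}$: a Krull-intersection argument applied to the Noetherian local ring $\mc O_{\ms P,(X,\tau)}$ then upgrades formal constancy to constancy over $R$. This reduction is not cosmetic — it is precisely what tames the characteristic-$p$ pathologies below, since over an Artinian base the inclusion of the special fibre is a nilpotent thickening.

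Next I would lift $f$ and $g$, via Lemma \ref{L:pencil-deforms}, to relative pencils $F,G:\mc X\to\P^1_R$, and form the combined morphism $\Phi=(F,G):\mc X\to\P^1_R\times_R\P^1_R$. Transversality of $f$ and $g$ means that on the special fibre $\phi=(f,g)$ is generically finite of degree $d=E\cdot D$ and is finite étale over a dense open $W\subset\P^1\times\P^1$, namely the complement of the images of the singular fibres of $f$ and of $g$ together with the ramification divisor. Over the Artinian base $\Phi$ is then finite étale over $W_R$ and deforms $\phi|_W$; by topological invariance of the étale site for the nilpotent thickening $W\inj W_R$, the cover $\mc X_{W_R}\to W_R$ is canonically isomorphic to the constant cover $X_W\tensor_k R\to W_R$. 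This produces a distinguished isomorphism $\psi:\mc X|_{W_R}\simto X_R|_{W_R}$ over the dense open $W_R$ that reduces to the identity on the special fibre. Note that staticity plays no role yet: the non-triviality of a single-pencil family (such as the non-isotrivial $Y$ of Theorem \ref{thmB}) lives entirely in the gluing across the boundary $\partial W$.

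The heart of the argument, and the step I expect to be the main obstacle, is to extend $\psi$ across $\partial W$ using \emph{both} staticity hypotheses. The curve $\partial W$ is contained in the union of the locus over the singular fibres of $f$, the locus over the singular fibres of $g$, and the ramification divisor (where both fibres are smooth). Over an étale neighbourhood in the first $\P^1_R$-factor, $F$-staticity gives an isomorphism of $\mc X$ with the constant family; because $X$ has no infinitesimal automorphisms the relevant automorphism schemes are étale, so a trivialization reducing to the identity on the special fibre is unique and descends, hence agrees with $\psi$ on the overlap and extends $\psi$ across the part of $\partial W$ over which the first pencil degenerates (and across the ramification divisor, where the $f$-fibre is smooth). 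Symmetrically, $G$-staticity extends $\psi$ across the part over the singular fibres of $g$. The two extensions cover all of $\partial W$ and agree on overlaps, since each restricts to $\psi$ on the dense $W_R$; the remaining indeterminacy locus has codimension $\ge 2$, so by normality (smoothness) of $\mc X$ the isomorphism extends to a global $\psi:\mc X\simto X_R$ of $\P^1_R\times_R\P^1_R$-schemes, hence of $R$-schemes. Running this over the Artinian tower gives formal constancy, which by the first paragraph gives $\mc X\cong X_R$.

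The three points I would watch most carefully are the following. First, verifying that the two partial extensions of $\psi$ genuinely glue is where $\H^0(X,T_X)=0$ does the real work, through the étaleness of the automorphism schemes and the consequent rigidity of identity-deforming trivializations. Second, one must check that transversality forces $\partial W$ to decompose into the three pieces above, so that every codimension-$1$ component is controlled by at least one pencil; here the hypothesis $\ms O(E)\not\cong\ms O(D)$ together with reducedness of $E\cap D$ is exactly what makes $\Phi$ generically finite and keeps the two discriminants from coalescing. Third, the Artinian reduction is genuinely essential: over a non-Artinian Henselian $R$ the fibres of $\P^1_R\to\P^1$ over function-field points acquire wild (Artin–Schreier) étale covers, so neither topological invariance nor the descent used in the extension step is available directly, and one must pass through the formal theory and the period space as in the first paragraph.
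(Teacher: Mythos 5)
Your reduction-plus-gluing strategy breaks at the step you yourself flag as the main obstacle: the extension of $\psi$ across $\partial W$. There you invoke $F$-staticity over an \'etale neighbourhood $U\to\P^1_R$ of the first factor and claim that ``a trivialization reducing to the identity on the special fibre is unique'' because $\H^0(X,T_X)=0$. But that vanishing is a global statement about the compact surface and does not localize: the infinitesimal automorphism sheaf of the fibration $X\to\P^1$ is the line bundle $\ms A\cong\ms O_{\P^1}(-2)$ (the normal bundle of the zero section of the Jacobian fibration, as computed in the proof of Lemma \ref{sec:stat-penc-deform-5}), which has no global sections but has many sections over any affine \'etale $U$ --- infinitesimal translations by sections of the Jacobian reducing to the zero section. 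Hence identity-reducing trivializations over $U$ are far from unique, and the discrepancy between your $\psi$ (an isomorphism over $W_R\subset P=\P^1_R\times_R\P^1_R$, respecting both projections) and an $F$-static trivialization (respecting only the first projection) is precisely such a translation, which need not vanish and need not extend over $U$; so ``agrees with $\psi$ on the overlap'' is unjustified. Indeed, your gluing mechanism, if valid, would apply verbatim to a single static pencil and would show that every static deformation of $X\to\P^1$ is constant --- contradicting Proposition \ref{sec:stat-penc-deform-2} and Lemma \ref{sec:stat-penc-deform-5}, which show that the space of such deformations is one-dimensional, with tangent space exactly $\H^1(\P^1,\ms O(-2))\neq 0$: nontrivial static deformations exist precisely because local trivializations fail to glue, and this failure is what the construction behind Theorem \ref{thmB} exploits. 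The paper's proof is engineered around this point: it never localizes on one factor, but treats $\Isom_P(\Phi,\phi)$ as a torsor under $\Aut_P(\phi)$ over the whole product $P$, where automorphisms must commute with both fibrations; transversality makes the finite map $\phi$ generically \'etale from the integral $X$, which kills all infinitesimal automorphisms of $\phi$ at once, and triviality of the torsor is then extracted by comparing $\H^1$ of $P$, of its formal completion $\widehat P$ along the augmentation ideal, and of $P_k$, with Grothendieck existence supplying the injectivity of $\H^1(P_{\textrm{\'et}},\Aut_P(\phi))\to\H^1(\widehat P_{\textrm{\'et}},\Aut_{\widehat P}(\widehat\phi))$.

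Your opening reduction is also flawed, and the paper shows it is unnecessary. Deformations of $X$ are not classified by morphisms to Ogus's period space: $\Def_X$ is $20$-dimensional while $\ms P$ is $9$-dimensional, and a period map exists only once you know the family is fibrewise supersingular and carries an extension of the marking (a relative version of Proposition \ref{P:form-marking}, which you would have to establish first; Ogus's Torelli theory also carries characteristic hypotheses that the proposition does not impose). Even granting this, constancy of the classifying map yields only isotriviality of $\mc X/R$, not triviality, so a further torsor argument over the Henselian base is needed. None of this machinery appears in the paper's proof, which handles a general Henselian local augmented $R$ directly by completing along the augmentation ideal and descending via Grothendieck existence; in particular your closing claim that an Artinian reduction is ``genuinely essential'' is contradicted by the paper itself.
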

\begin{proof}
  Write $$f_1,f_2:X\to\P^1$$ for the pencils with static
  lifts $$F_1,F_2:\mc X\to\P^1_R.$$ Write $P=\P^1_R\times\P^1_R$ and
  $\widehat P$ for the formal scheme given by completing $P$ along the
  augmentation ideal of $R$.

  Since $f_1$ and $f_2$ are distinct, we have two finite maps
  $$\Phi=(F_1,F_2):\mc X\to P$$ and
  $$\phi=((f_1)_R,(f_2)_R):X_R\to P.$$ 
  Moreover, since the pencils are transverse, we know that $\Phi$ and
  $\phi$ are generically \'etale. Write $\widehat{\mc X}$ and
  $\widehat{X_R}$ for the formal schemes given by completing each
  along the augmentation ideal of $R$, so that there are finite
  generically \'etale morphisms of formal schemes
  $$\widehat\Phi:\widehat{\mc X}\to\widehat P$$ and 
  $$\widehat\phi:\widehat{X_R}\to\widehat P.$$

  Under the assumption that both $f_1$ and $f_2$ are static pencils,
  we have that both $\Phi$ and $\phi$ are isomorphic \'etale-locally
  on $P$. The \'etale sheaf $\Isom_P(\Phi,\phi)$ is thus a torsor
  under the sheaf $\Aut(\phi)$, and similarly for $\Isom_{\widehat
    P}(\widehat\Phi,\widehat\phi)$. By the infinitesimal rigidity of the \'etale
  site, we can identify the small \'etale site of $\widehat P$
  with the small \'etale site of $P\tensor_R k$. Via this
  identification, there is a reduction map
  $$\Aut_{\widehat P}(\phi)\to\Aut_{P_k}(\phi_k).$$
  Since $X$ is integral and $F$ is generically unramified, we know
  that this reduction map is an isomorphism (i.e., there are no
  infinitesimal automorphisms for a generically \'etale finite
  morphism with integral domain). There is thus an induced isomorphism
  $$\H^1(\widehat P_{\textrm{\'et}},\Aut_{\widehat P}(\widehat\phi))\simto
  \H^1((P_k)_{\textrm{\'et}},\Aut_{P_k}(\phi_k)).$$

  Since $\Phi$ and $\phi$ are isomorphic over $k$ (being deformations
  of the same pair of pencils), it follows that the class of the
  torsor $\Isom_{\widehat P}(\widehat{\Phi},\widehat\phi)$ is
  trivial. On the other hand, the Grothendieck existence theorem shows
  that the natural completion map
  $$\H^1(P_{\textrm{\'et}},\Aut_P(\Phi,\phi))\to\H^1((\widehat P)_{\textrm{\'et}},\Aut_{\widehat P}(\widehat\Phi,\widehat\phi))$$
  is injective (in fact, an isomorphism). It follows that
  $\Isom_P(\Phi,\phi)$ is a trivial torsor, which shows that $F$ is
  isomorphic to the trivial deformation, and thus that $\mc X$ itself is
  isomorphic to the trivial deformation of $X$.
\end{proof}

\begin{defn}
  Call two pencils $f,g:X\to\P^1$ \emph{inequivalent} if the fibers of
  $f$ and $g$ over $0$ are not linearly equivalent.
\end{defn}

\begin{prop}
  \label{sec:stat-penc-deform}
  Suppose $R$ is a normal Henselian local augmented $k$-algebra. If
  $\mc X\to\Spec R$ is a deformation of $X$ over which two
  inequivalent pencils remain static then $\mc X$ is isomorphic to the
  constant deformation $X_R$.
\end{prop}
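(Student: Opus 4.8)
The plan is to reduce Proposition \ref{sec:stat-penc-deform} to the already-established transverse case, Proposition \ref{P:static-implies-constant}. The difference between the two statements is that here I only assume the two pencils are \emph{inequivalent} (their fibers over $0$ are not linearly equivalent), whereas the transverse hypothesis additionally requires that general fibers meet reducedly and that the restriction of one pencil to a general fiber of the other is generically \'etale. So the essential task is to show that, after possibly replacing the given inequivalent pencils $f,g$ by better-behaved ones built from them, I can arrange transversality without losing staticity, and then invoke Proposition \ref{P:static-implies-constant} to conclude $\mc X\cong X_R$.

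First I would observe that staticity is stable under the natural operations one can perform on a pencil. Concretely, given static pencils $f_1,\ldots,f_r$ with static lifts, any divisor class on $X$ that is a $\Z$-linear combination of their fiber classes lifts canonically over $R$ (using that $X$ has Artin invariant $10$, so $\Pic(\mc X)\simto\Pic(X)$, exactly as in Lemma \ref{L:pencil-deforms}), and hence any new genus-$1$ pencil $h:X\to\P^1$ whose fiber class lies in the sublattice generated by the classes of $f$ and $g$ will again be static. This is because the lift of $h$'s fiber class, together with the vanishing of $\H^1(X,\ms O(E_h))$, produces a lift of the linear system, and the resulting relative pencil is forced to agree \'etale-locally with $h\tensor R$ by the same rigidity of divisor classes on the maximal-Artin-invariant surface.

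The heart of the argument is then a lattice-theoretic maneuver inside $\NS(X)$: starting from two inequivalent pencil classes $E=[f^{-1}(x)]$ and $D=[g^{-1}(x')]$, which span a rank-$2$ sublattice of the hyperbolic-type lattice $\NS(X)$, I would produce a pair of classes in this sublattice (or a slightly enlarged span) that are (i) represented by genus-$1$ pencils and (ii) transverse. Transversality is governed by intersection numbers: the condition $\ms O(E)\not\cong\ms O(D)$ is inequivalence, and the generic \'etale-ness / reducedness of the intersection is controlled by $E\cdot D>0$ together with a genericity argument for the fibers. The main obstacle I anticipate is precisely here: I must verify that inequivalent pencils automatically have positive intersection (so that their general fibers do meet, forcing transversality after a generic choice of fibers), and that if the naive pair $E,D$ fails to be transverse I can replace them by $E$ and $E+D$ (or another combination) which remains in the span — hence static by the previous paragraph — while achieving $E\cdot(E+D)>0$ and non-proportionality. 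Bounding these intersection numbers and ruling out degenerate configurations (e.g.\ both pencils sharing a common component structure) is the delicate step.

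Once a transverse static pair has been exhibited, the conclusion is immediate: Proposition \ref{P:static-implies-constant} applies verbatim to that pair and yields $\mc X\cong X_R$. I would therefore structure the write-up as (1) staticity is preserved under taking pencils in the span of static fiber classes; (2) any two inequivalent pencils generate, inside $\NS(X)$, a transverse pair of pencil classes; (3) apply the transverse case. The normality hypothesis on $R$ and the Henselian augmented structure are inherited directly by the invocation of Proposition \ref{P:static-implies-constant}, so no further input on $R$ is needed beyond what that proposition already consumes.
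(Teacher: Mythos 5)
Your reduction to the transverse case breaks down at the lattice step, and the obstruction it is meant to overcome is not the one actually in play. First, the span of the two fiber classes $E$ and $D$ contains no new pencil classes at all: a genus-$1$ pencil class on a K3 surface is isotropic, and $(aE+bD)^2=2ab\,E\cdot D$, so the only isotropic classes in the rank-$2$ sublattice are multiples of $E$ and $D$ themselves. In particular $E+D$ has positive square and is not a pencil class, so there is nothing to swap in. Second, the gap between ``inequivalent'' and ``transverse'' is not governed by intersection numbers: $E\cdot D>0$ is automatic for inequivalent elliptic pencils on a K3 (two non-proportional nef isotropic classes cannot be orthogonal), yet transversality can still fail because in characteristic $p$ the restriction of $f$ to a general fiber of $g$ may be \emph{inseparable}, in which case no pair of smooth fibers meets reducedly. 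This inseparability phenomenon is exactly why the proposition needs a separate proof, and it cannot be removed by changing the pencils within any span. Third, your step (1) conflates ``the pencil deforms'' with ``the pencil deforms statically'': lifting the divisor class and its sections (as in Lemma \ref{L:pencil-deforms}) produces a relative pencil, but gives no control whatsoever on whether that lift is \'etale-locally isomorphic to the constant family. If staticity followed from the class lifting, then on an Artin invariant $10$ surface every pencil would be static in every deformation, and Proposition \ref{P:static-implies-constant} would force every deformation to be trivial, contradicting the non-triviality of the Ogus space.

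The paper's proof attacks the inseparability directly rather than trying to avoid it. One runs the same setup as in Proposition \ref{P:static-implies-constant}, obtaining $\Phi:\mc X\to P$ and $\phi:X_R\to P$ that are \'etale-locally isomorphic but possibly not generically \'etale, and then factors $X\to P_k$ canonically as $X\to\widebar X\to P_k$ with $\widebar X\to P_k$ separable and $X\to\widebar X$ purely inseparable. The separable part $\widebar{\mc X}$ is an \'etale form of $\widebar X_R$ and is trivialized by the torsor argument of the transverse case; the purely inseparable part is handled by passing to generic points (this is where normality of $R$ enters) and observing that a purely inseparable field extension has trivial automorphism group, hence admits no non-trivial \'etale forms. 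Any repair of your argument would have to supply something equivalent to this separable/inseparable d\'evissage; the lattice-theoretic route cannot.
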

\begin{proof}
  We start with the same morphisms
$$\Phi:\mc X\to P$$ and $$\phi:X_R\to P$$ as in the proof of
Proposition \ref{P:static-implies-constant}, and, as above, we know
that they are isomorphic \'etale-locally on $P$. What we do not know
is that $\Phi$ and $\phi$ are generically \'etale. We can avoid this
since we are working with a normal domain $R$ as the base ring.

First, write $X\to P_k$ as a composition
$$X\to\widebar X\to P_k$$
where $\widebar X\to P_k$ is separable and $X\to\widebar X$ is purely
inseparable. This is canonical (taking $\widebar X$ to be the
normalization of $P$ inside the separable closure of $k(P)$ inside
$k(X)$), and the factorization $$X_R\to\widebar X_R\to P$$ is
identified with the normalization of $P$ inside the separable closure
of $K(P)$ in $K(X_R)$. 

Since $\mc X$ is \'etale-locally isomorphic to $X_R$, it follows that
for the analogous factorization $$\mc X\to{\widebar X}\to P,$$
we know that $\widebar{\mc X}$ is an \'etale form of $\widebar
X_R$. By the arguments in the proof of Proposition
\ref{P:static-implies-constant}, we conclude that $\widebar{\mc
  X}\cong\widebar X_R$. Choose an identification between the two. It
remains to show that the two deformations of $X\to\widebar X$ are
themselves isomorphic, knowing that they are simultaneously purely
inseparable and \'etale-locally isomorphic.

Passing to the generic point of $\widebar X_R$, we have two purely
inseparable field extensions $M_1/L$ and $M_2/L$ such that
$$M_1\tensor_LL^{\textrm{sep}}\cong M_2\tensor_LL^{\textrm{sep}}.$$
Since every scheme in sight is normal, it suffices to show that
$M_1\cong M_2$ (as $L$-algebras). But the automorphism sheaf $\Aut_L(M_1)$
on the small \'etale site of $\Spec L$ is the singleton sheaf (since
$M_1$ and $L^{\textrm{sep}}$ are linearly disjoint, and a purely
inseparable field extension has trivial automorphism group). Thus,
\'etale forms are all trivial, as desired.
\end{proof}

\begin{remark}
  The reader will note the curious fact that the proof of Proposition
  \ref{sec:stat-penc-deform} uses the normality of $R$ in an essential
  way. In particular, we gain no insight into the infinitesimal
  properties of pairs of static pencils that are not transverse. As
  Maulik as pointed out to us, if one works over a finite base field,
  one can deduce from Propositions \ref{P:static-implies-constant} and
  \ref{sec:stat-penc-deform} that given an infinite list of pencils on
  $X$, applying the construction of Theorem \ref{T:curves in period
    space} below yields an infinite list of curves such that for any given finite
  order $n$, infinitely many of these curves must agree up to order $n$ (as there are only finitely many jets of a given order
  on the Ogus space over a finite field). In particular, we cannot
  have infinitely many pairwise transverse pencils on a
  supersingular K3 surface of Artin invariant $10$.
\end{remark}


\begin{prop}
  \label{sec:stat-penc-deform-2}
  Let $X$ be a K3 surface and $\pi:X\to\P^1$ an elliptic
  pencil. The locus in $\Def_{X}$ parametrizing deformations over which
  $\pi$ remains static is $1$-dimensional.
\end{prop}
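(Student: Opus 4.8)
The plan is to reduce the statement to a tangent space computation and then pin down the dimension using the non-trivial family already built. Since $\P^1$ is rigid, a deformation of $X$ over which $\pi$ stays static is the same thing as an \'etale-locally-trivial deformation of the morphism $\pi:X\to\P^1$ over the fixed base: \'etale-locally on $\P^1$ the deformed fibration is isomorphic to the constant one. Such deformations are classified to first order by $\H^1(\P^1,\pi_\ast T_{X/\P^1})$, where $\pi_\ast T_{X/\P^1}$ is the sheaf of infinitesimal automorphisms of $\pi$ acting along the fibers. The point is that the usual local-to-global term $\H^0(\P^1,\R^1\pi_\ast T_{X/\P^1})$, which records deformations that move the fibers in moduli (changing $j$-invariants), is exactly what \'etale-local triviality kills; concretely, the Leray sequence for $T_{X/\P^1}$ together with $\H^2(\P^1,\pi_\ast T_{X/\P^1})=0$ identifies the static part with the injective edge image $\H^1(\P^1,\pi_\ast T_{X/\P^1})\hookrightarrow\H^1(X,T_{X/\P^1})$, and the forgetful map to $\Def_X$ is the further composite $\H^1(X,T_{X/\P^1})\to\H^1(X,T_X)$.

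Next I would identify and compute $\pi_\ast T_{X/\P^1}$. Intrinsically this is the Lie algebra sheaf of the relative Picard scheme $\Pic^0_{X/\P^1}$, whose sections act on $X$ by fiberwise translation, so there is a canonical isomorphism $\pi_\ast T_{X/\P^1}\cong\R^1\pi_\ast\ms O_X\cong\mc L^{-1}$, where $\mc L$ is the fundamental line bundle of the fibration. This identification is what handles the singular fibers correctly, since the Lie algebra of the Picard functor is computed by $\R^1\pi_\ast\ms O_X$ irrespective of fiber degeneration. For a K3 surface $\deg\mc L=\chi(\ms O_X)=2$, so $\mc L^{-1}\cong\ms O_{\P^1}(-2)$ and $\H^1(\P^1,\pi_\ast T_{X/\P^1})\cong\H^1(\P^1,\ms O_{\P^1}(-2))\cong k$ is one-dimensional. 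Equivalently, the Leray sequence for $\ms O_X$ identifies this group with $\H^2(X,\ms O_X)=k$, the tangent space of the formal Brauer group; this is the conceptual reason the static directions coincide with the Brauer directions used throughout the paper.

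Finally I would conclude. Because $\P^1$ is a curve the obstruction group $\H^2(\P^1,\pi_\ast T_{X/\P^1})$ vanishes, so the functor of \'etale-locally-trivial deformations of $\pi$ is unobstructed and smooth of dimension $1$. It then remains only to check that the forgetful map to $\Def_X$ does not collapse this line; as the source is one-dimensional it suffices to show the differential is nonzero, i.e. that some static deformation of $\pi$ induces a non-trivial deformation of $X$. This is exactly what the construction preceding this section provides: the family $Y\to\A^1$ of Theorem \ref{thmB} (equivalently, the non-constant period curve $c$ of Theorem \ref{T:curves in period space}) realizes each fiber as an \'etale form of $X\to\P^1$, hence as a static deformation, and is non-isotrivial, so its Kodaira--Spencer class is a nonzero static tangent vector. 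Thus the differential is injective and the static locus is smooth of dimension $1$. The step I expect to be most delicate is the identification of the static tangent space with $\H^1(\P^1,\pi_\ast T_{X/\P^1})$ --- precisely separating the \'etale-locally-trivial deformations from the fiber-moduli-changing ones and computing $\pi_\ast T_{X/\P^1}$ across the singular fibers --- together with ruling out that the map to $\Def_X$ kills this line.
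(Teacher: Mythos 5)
Your first two paragraphs are, in substance, the paper's own computation (Lemma \ref{sec:stat-penc-deform-5}): the sheaf of infinitesimal automorphisms of the pencil is $\ms O_{\P^1}(-2)$ --- you obtain it as the Lie algebra $\R^1\pi_\ast\ms O_X$ of the relative Picard functor, the paper as the normal sheaf of the zero section of the Jacobian fibration $J$, which is the same sheaf --- and static deformations form a pseudo-torsor under its $\H^1$, unobstructed because $\H^2$ of a coherent sheaf on a curve vanishes. (You do skip prorepresentability of the static functor, the paper's Lemma \ref{sec:stat-penc-deform-4}, which is what makes ``the dimension of the locus'' meaningful; that is a real but minor omission.)

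The genuine gap is in your last step, the injectivity of the tangent map $T\Def^s_{X/\P^1}\to T\Def_X$. The paper proves this by a self-contained infinitesimal argument (Lemma \ref{sec:stat-penc-deform-3}): a static first-order deformation with trivial image in $\Def_X$ yields two morphisms $f,g:X_{k[\eps]}\to\P^1_{k[\eps]}$ agreeing over $k$; the Stein factorization of $(f,g)$ is an $\eps$-flat deformation of the diagonal $\Delta\subset\P^1\times\P^1$, hence lies in $\H^0(\P^1,\ms O(2))=\operatorname{Lie}\PGL_2$, and staticity fixes every $k$-point, forcing the deformation to be trivial and $f=g$. Your substitute --- exhibiting one static tangent vector with nonzero image via the family $Y\to\A^1$ of Theorem \ref{thmB} --- fails on two counts as written. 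First, Theorem \ref{thmB} and Corollary \ref{C:family-cor} give a \emph{fiberwise} statement: each geometric fiber $Y_t\to\P^1_t$ is an \'etale form of $X\to\P^1$. This does not make the restriction of $Y$ over $k[\eps]$ (or over the Henselian local ring at $0$) a static deformation in the sense required, namely isomorphic to the constant family \'etale-locally on $\P^1_{k[\eps]}$; that relative statement needs its own argument, e.g.\ redoing the twisted-sheaf comparison of Proposition \ref{sec:modul-interpr-isom-13} over the base $\P^1_{k[\eps]}$ rather than over a field. Second, non-isotriviality of $Y$ (equivalently, non-constancy of the period map $c$ of Theorem \ref{T:curves in period space}) does not imply that the Kodaira--Spencer class of $Y$ at the particular point $0$ is nonzero: a non-constant morphism $\A^1\to\ms P$ can have vanishing differential at $0$, and nothing in the paper rules this out. (There is no circularity in your invoking Theorem \ref{T:curves in period space}, since it is proved independently of Section \ref{sec:static_pencils_and_deformations}; but both gaps would have to be closed before your route goes through, whereas the paper's diagonal argument avoids them entirely.)
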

\begin{proof}
  Let $\Def_{X/\P^1}$ be the functor whose objects over an augmented
  Artinian $k$-algebra $A$ are Cartesian diagrams
$$\xymatrix{X\ar[r]\ar[d] & \mc X\ar[d]\\
  \P^1_k\ar[r] & \P^1_A}$$ in which the vertical arrows are flat and
proper. (In other words, $\Def_{X/\P^1}$ parametrizes relative
deformations of the pencil.)  Let $\Def^s_{X/\P^1}$ denote the
subfunctor parametrizing families that are isomorphic to the
constant family
$$\xymatrix{X\ar[r]\ar[d] & X\times\P^1_A\ar[d]\\
  \P^1\ar[r] & \P^1_A}$$
locally on $\P^1_A$.

\begin{lem}
  \label{sec:stat-penc-deform-4}
The functor $\Def_{X/\P^1}^s$ is prorepresentable. 
\end{lem}
\begin{proof}
  We will temporarily write $F$ for the functor $\Def^s_{X/\P^1}$. To show that $F$ is prorepresentable, 
we will use Schlessinger's criterion. Given morphisms $A\to C$ and $B\to C$ in $\operatorname{Art}_k$, 
there is a natural diagram
\begin{equation}
  \label{eq:sch}
 F(A\times_C B)\to F(A)\times_{F(C)}F(B)
\end{equation}

We need to check the following.
\begin{enumerate}
\item \eqref{eq:sch} is a surjection when $B\to C$ is small
\item \eqref{eq:sch} is a bijection when $C=k$ and $B=k[\eps]$
\item $F(k[\eps])$ is a finite-dimensional vector space (with its natural structure)
\item if $A\to C$ is small then $F(A\times_C A)\to F(A)\times_{F(C)} F(A)$ is a bijection
\end{enumerate}

Since we already know that these conditions hold for the moduli of diagrams $X\to\P^1$, 
the key is showing that they respect the \'etale-local triviality condition. In other words, we 
need to show that given a family
$$\mc X\to\P^1_{A\times_C B}$$
such that the restricted families
$$\mc X_A\to\P^1_A$$
and $$\mc X_B\to\P^1_B$$
are \'etale-locally isomorphic to the trivial family, then the same holds for the original family.
But we know that the morphism of $\P^1_{A\times_C B}$-schemes 
$$\Isom_{\P^1_{A\times_C B}}(\mc X,X_{A\times_C B})\to\Isom_{\P^1_A}(\mc X_A,X_A)
\times_{\Isom_{\P^1_C}(\mc X_C,X_C)}\Isom_{\P^1_B}(\mc X_B,X_B)$$
is an isomorphism under all of the listed conditions because the stack of elliptic surfaces is algebraic.
The results now follow from the topological invariance of the \'etale site.
\end{proof}

\begin{lem}\label{sec:stat-penc-deform-5}
  The formal scheme $\Def^s_{X/\P^1}$ is formally smooth and  
$1$-dimensional.
\end{lem}
\begin{proof}
The infinitesimal automorphism sheaf $\ms A$ of $X\to\P^1$ is precisely the normal sheaf
of the $0$-section $\P^1\to J$ of the Jacobian fibration of $X$. 
Since $J$ is also a K3 surface, 
the normal sheaf is $\ms O(-2)$. In particular,  $\H^1(\P^1, \ms A)$ is $1$-dimensional and $\H^2(\P^1,\ms A)=0$. Given a square-zero extension $A\to A_{0}$ with ideal sheaf $I$ and a point $\Def^s_{X/\P^1}(A_0)$, the lifts to $A$ are obstructed by elements of $\H^2(\P^1_{A_0}, \ms A\tensor I)$ and form a pseudo-torsor under $\H^1(\P^1_{A_0}, \ms A\tensor I)$. It follows that deformations are unobstructed (so $\Def^s_{X/\P^1}$ is formally smooth) and the tangent space is $1$-dimensional, as desired.
\end{proof}

\begin{lem}\label{sec:stat-penc-deform-3}
  The forgetful morphism
$$\Def_{X/\P^1}^s\to\Def_X$$
is a closed immersion of formal $k$-schemes
\end{lem}
\begin{proof}
  It is enough to show that the tangent map
$$T\Def^s_{X/\P^1}\to T\Def_X$$
is injective. Suppose $$X_\eps\to\P^1_{k[\eps]}$$ is a tangent vector
that maps to $0$. This means that the underlying surface $X_{\eps}$ is
isomorphic to $X_{k[\eps]}$ in a way compatible with the
identifications over $k$. Choosing such an isomorphism yields two
morphisms
$$f,g:X_{k[\eps]}\to\P^1_{k[\eps]}$$
with the property that for each $k$-point $p\in\P^1$, the restrictions of $f$ and $g$ to 
$p\tensor_k k[\eps]$ are constant. Consider the Stein factorization of the induced morphism
$$(f,g):X_{k[\eps]}\to\P^1_{k[\eps]}\times_{\spec k[\eps]}\P^1_{k[\eps]}.$$
Since every scheme in sight is $\eps$-flat, the Stein factorization is a finite $\eps$-flat 
morphism
$$S\to\P^1\times\P^1$$
over $k[\eps]$. Moreover, $S\tensor_{k[\eps]} k$ is isomorphic to the
  diagonal by the definition of the moduli problems. Thus, $S$ is an
  infinitesimal deformation of the diagonal
  $\Delta\subset\P^1_k\times\P^1_k$. We know that $\Delta^2=2$, so the
  space of infinitesimal deformations is a tosor under $\H^0(\P^1,\ms
  O(2))$. In fact, this is just the tangent space to the automorphism
  group scheme $\PGL_2$ of $\P^1$. Since each $k$-point must be fixed
  (by the static assumption), there are no non-trivial infinitesimal
  automorphisms, and we see that $f=g$, as desired.
\end{proof}

This completes the proof of Proposition \ref{sec:stat-penc-deform-2}.

\end{proof}


\bibliography{artinconjbib}{}

\begin{thebibliography}{10}

\bibitem{MR2786662}
Dan Abramovich, Martin Olsson, and Angelo Vistoli.
\newblock Twisted stable maps to tame {A}rtin stacks.
\newblock {\em J. Algebraic Geom.}, 20(3):399--477, 2011.

\bibitem{MR0371899}
M.~Artin.
\newblock Supersingular {$K3$} surfaces.
\newblock {\em Ann. Sci. \'Ecole Norm. Sup. (4)}, 7:543--567 (1975), 1974.

\bibitem{MR1629929}
Tom Bridgeland.
\newblock Fourier-{M}ukai transforms for elliptic surfaces.
\newblock {\em J. Reine Angew. Math.}, 498:115--133, 1998.

\bibitem{dejong-gabber}
A.~J. de~Jong.
\newblock A result of {G}abber.
\newblock 2003.

\bibitem{MR0244271}
Alexander Grothendieck.
\newblock Le groupe de {B}rauer. {III}. {E}xemples et compl\'ements.
\newblock In {\em Dix {E}xpos\'es sur la {C}ohomologie des {S}ch\'emas}, pages
  88--188. North-Holland, Amsterdam; Masson, Paris, 1968.

\bibitem{huybrechts}
Daniel Huybrechts.
\newblock Lectures on k3 surfaces.
\newblock http://www.math.uni-bonn.de/people/huybrech/K3Global.pdf, 2013.

\bibitem{MR2309155}
Max Lieblich.
\newblock Moduli of twisted sheaves.
\newblock {\em Duke Math. J.}, 138(1):23--118, 2007.

\bibitem{MR2388554}
Max Lieblich.
\newblock Twisted sheaves and the period-index problem.
\newblock {\em Compos. Math.}, 144(1):1--31, 2008.

\bibitem{MR2579390}
Max Lieblich.
\newblock Compactified moduli of projective bundles.
\newblock {\em Algebra Number Theory}, 3(6):653--695, 2009.

\bibitem{MR3114930}
Max Lieblich.
\newblock On the ubiquity of twisted sheaves.
\newblock In {\em Birational geometry, rational curves, and arithmetic}, pages
  205--227. Springer, New York, 2013.

\bibitem{1403.3073}
Max Lieblich.
\newblock On the unirationality of supersingular {K3} surfaces.
\newblock August 2014, 1403.3073.

\bibitem{Lieblich:2011ab}
Max Lieblich and Davesh Maulik.
\newblock A note on the cone conjecture for k3 surfaces in positive
  characteristic.
\newblock 02 2011, 1102.3377.

\bibitem{lieblich}
Max Lieblich and Martin Olsson.
\newblock Fourier-{M}ukai partners of {K3} surfaces in positive characteristic,
  http://arxiv.org/abs/1112.5114.

\bibitem{1304.5623}
Christian Liedtke.
\newblock Supersingular {K3} surfaces are unirational.
\newblock {\em Invent.\ Math.}, 200(3):979--1014, 2014.
\newblock arxiv:1304.5623v4.

\bibitem{MR1432058}
Laurent Moret-Bailly.
\newblock Un probl\`eme de descente.
\newblock {\em Bull. Soc. Math. France}, 124(4):559--585, 1996.

\bibitem{MR751133}
Shigeru Mukai.
\newblock Symplectic structure of the moduli space of sheaves on an abelian or
  {$K3$} surface.
\newblock {\em Invent. Math.}, 77(1):101--116, 1984.

\bibitem{MR717616}
Arthur Ogus.
\newblock A crystalline {T}orelli theorem for supersingular {$K3$} surfaces.
\newblock In {\em Arithmetic and geometry, {V}ol. {II}}, volume~36 of {\em
  Progr. Math.}, pages 361--394. Birkh\"auser Boston, Boston, MA, 1983.

\bibitem{MR2312554}
Martin Olsson.
\newblock Sheaves on {A}rtin stacks.
\newblock {\em J. Reine Angew. Math.}, 603:55--112, 2007.

\bibitem{MR1785328}
Andreas Schweizer.
\newblock Extremal elliptic surfaces in characteristic 2 and 3.
\newblock {\em Manuscripta Math.}, 102(4):505--521, 2000.

\bibitem{MR1312368}
Joseph~H. Silverman.
\newblock {\em Advanced topics in the arithmetic of elliptic curves}, volume
  151 of {\em Graduate Texts in Mathematics}.
\newblock Springer-Verlag, New York, 1994.

\bibitem{MR1610977}
John Tate.
\newblock On the conjectures of {B}irch and {S}winnerton-{D}yer and a geometric
  analog.
\newblock In {\em S\'eminaire {B}ourbaki, {V}ol.\ 9}, pages Exp.\ No.\ 306,
  415--440. Soc. Math. France, Paris, 1995.

\end{thebibliography}
\bibliographystyle{hplain}
\end{document}